\title[The Bochner Technique]{New curvature conditions for the Bochner Technique}
\author{Peter Petersen and Matthias Wink}
\address{Department of Mathematics, UCLA, 520 Portola Plaza, Los Angeles, CA, 90095}
\email{petersen@math.ucla.edu}
\email{wink@math.ucla.edu}
\subjclass[2010]{53B20, 53C20, 53C21, 53C23, 58A14}
\begin{document}
\newcommand{\diam} {\operatorname{diam}}
\newcommand{\Scal} {\operatorname{Scal}}
\newcommand{\scal} {\operatorname{scal}}
\newcommand{\Ric} {\operatorname{Ric}}
\newcommand{\Hess} {\operatorname{Hess}}
\newcommand{\grad} {\operatorname{grad}}
\newcommand{\Sect} {\operatorname{Sect}}
\newcommand{\Rm} {\operatorname{Rm}}
\newcommand{ \Rmzero } {\mathring{\Rm}}
\newcommand{\Rc} {\operatorname{Rc}}
\newcommand{\Curv} {S_{B}^{2}\left( \mathfrak{so}(n) \right) }
\newcommand{ \tr } {\operatorname{tr}}
\newcommand{ \id } {\operatorname{id}}
\newcommand{ \Riczero } {\mathring{\Ric}}
\newcommand{ \ad } {\operatorname{ad}}
\newcommand{ \Ad } {\operatorname{Ad}}
\newcommand{ \dist } {\operatorname{dist}}
\newcommand{ \rank } {\operatorname{rank}}
\newcommand{\Vol}{\operatorname{Vol}}
\newcommand{\dVol}{\operatorname{dVol}}
\newcommand{ \zitieren }[1]{ \hspace{-3mm} \cite{#1}}
\newcommand{ \pr }{\operatorname{pr}}
\newcommand{\diag}{\operatorname{diag}}
\newcommand{\Lagr}{\mathcal{L}}
\newcommand{\av}{\operatorname{av}}
\newcommand{ \floor }[1]{ \lfloor #1 \rfloor }
\newcommand{ \ceil }[1]{ \lceil #1 \rceil }

\newtheorem{theorem}{Theorem}[section]
\newtheorem{definition}[theorem]{Definition}
\newtheorem{example}[theorem]{Example}
\newtheorem{remark}[theorem]{Remark}
\newtheorem{lemma}[theorem]{Lemma}
\newtheorem{proposition}[theorem]{Proposition}
\newtheorem{corollary}[theorem]{Corollary}
\newtheorem{assumption}[theorem]{Assumption}
\newtheorem{acknowledgment}[theorem]{Acknowledgment}
\newtheorem{DefAndLemma}[theorem]{Definition and lemma}

\newenvironment{remarkroman}{\begin{remark} \normalfont }{\end{remark}}
\newenvironment{exampleroman}{\begin{example} \normalfont }{\end{example}}

\newcommand{\R}{\mathbb{R}}
\newcommand{\N}{\mathbb{N}}
\newcommand{\Z}{\mathbb{Z}}
\newcommand{\Q}{\mathbb{Q}}
\newcommand{\C}{\mathbb{C}}
\newcommand{\F}{\mathbb{F}}
\newcommand{\X}{\mathcal{X}}
\newcommand{\D}{\mathcal{D}}
\newcommand{\Cont}{\mathcal{C}}

\renewcommand{\labelenumi}{(\alph{enumi})}
\newtheorem{maintheorem}{Theorem}[]
\renewcommand*{\themaintheorem}{\Alph{maintheorem}}
\newtheorem*{theorem*}{Theorem}
\newtheorem*{corollary*}{Corollary}
\newtheorem*{remark*}{Remark}
\newtheorem*{example*}{Example}
\newtheorem*{question*}{Question}

\begin{abstract}
We prove a vanishing and estimation theorem for the $p^{\text{th}}$-Betti number of closed $n$-dimensional Riemannian manifolds with a lower bound on the average of the lowest $n-p$ eigenvalues of the curvature operator. This generalizes results due to D. Meyer, Gallot-Meyer, and Gallot. For example, in dimensions $n=5,6$ we obtain vanishing of the Betti numbers provided that the curvature operator is $3$-positive. As B{\"o}hm-Wilking observed, $3$-positivity of the curvature operator is not preserved by the Ricci flow.
\end{abstract}

\maketitle

\section*{Introduction}

A fundamental theme in Riemannian geometry is to understand the relationship between the curvature and the topology of a Riemannian manifold. The purpose of this paper is to prove the following vanishing and estimation theorem for the Betti numbers:

\begin{maintheorem}
Let $n \geq 3$ and let $(M,g)$ be a closed connected $n$-dimensional Riemannian manifold. Fix $1 \leq p \leq \floor{\frac{n}{2}}$ and consider the eigenvalues $\lambda_1 \leq \ldots \leq \lambda_{ {\binom{n}{2}} }$ of the curvature operator of $(M,g).$ \vspace{2mm}

If $\lambda_1 + \ldots + \lambda_{n-p} > 0$, then the Betti numbers $b_p(M)$ and $b_{n-p}(M)$ vanish. \vspace{2mm}

Furthermore, let $\kappa \leq 0$, $D>0$, and suppose that 
\begin{equation*}
\frac{\lambda_1 + \ldots + \lambda_{n-p}}{n-p}  \geq \kappa.
\end{equation*}

In case $\kappa =0$ all harmonic $p$-forms are parallel. When $\kappa \leq 0$ and $\diam M \leq D$, then there is a constant $C \left(n, \kappa D^2 \right)>0$ such that
\begin{equation*}
b_p(M) \leq \binom{n}{p} \exp \left( C \left(n,\kappa D^2 \right) \cdot \sqrt{-\kappa D^2 p( n-p ) } \right).
\end{equation*}
In particular, there exists $\varepsilon(n) > 0$ such that $\kappa D^2 \geq - \varepsilon(n)$ implies $b_p(M) \leq \binom{n}{p}.$
\label{BettiNumberEstimate}
\end{maintheorem}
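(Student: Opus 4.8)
The plan is to run the Bochner technique on harmonic $p$-forms, the novelty being a sharp pointwise lower bound for the Weitzenb\"ock curvature term in terms of the eigenvalues $\lambda_{1}\le\cdots\le\lambda_{\binom{n}{2}}$ of the curvature operator $\mathfrak{R}$. Recall the Weitzenb\"ock formula $\Delta\omega=\nabla^{*}\nabla\omega+\mathcal{R}_{p}\omega$ on $p$-forms, where, after diagonalizing $\mathfrak{R}$ by an orthonormal basis $\{E_{a}\}$ of $\mathfrak{so}(n)\cong\Lambda^{2}\R^{n}$ with $\mathfrak{R}E_{a}=\lambda_{a}E_{a}$, the curvature endomorphism is $\mathcal{R}_{p}=-\sum_{a}\lambda_{a}\,\rho_{p}(E_{a})^{2}$ with $\rho_{p}$ the induced skew-symmetric action of $\mathfrak{so}(n)$ on $\Lambda^{p}\R^{n}$. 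The crucial first step is the purely algebraic estimate
\[
\langle\mathcal{R}_{p}\omega,\omega\rangle\ \geq\ p\,(\lambda_{1}+\cdots+\lambda_{n-p})\,|\omega|^{2}\qquad\text{for all }\omega\in\Lambda^{p}\R^{n},
\]
which for $p=1$ is the elementary fact that $\Ric(v,v)$, being the trace of $\mathfrak{R}$ restricted to the $(n-1)$-plane $v\wedge\R^{n}$, is at least $\lambda_{1}+\cdots+\lambda_{n-1}$ for a unit vector $v$, and which is sharp on the round sphere, where $\mathcal{R}_{p}=p(n-p)\kappa_{0}\,\id$.

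To establish this estimate I would fix a unit $\omega$ and set $c_{a}:=|\rho_{p}(E_{a})\omega|^{2}\geq0$, so that $\langle\mathcal{R}_{p}\omega,\omega\rangle=\sum_{a}\lambda_{a}c_{a}$, while the Casimir identity $\sum_{a}\bigl(-\rho_{p}(E_{a})^{2}\bigr)=p(n-p)\,\id$ gives $\sum_{a}c_{a}=p(n-p)$. If these were the only constraints the infimum of $\sum_{a}\lambda_{a}c_{a}$ would be the too-weak $p(n-p)\lambda_{1}$; the content of the estimate is that the representation forces additional constraints on the $c_{a}$ — roughly, that no more than weight $p(n-p)$ can be placed on any $n-p$ of the directions $E_{a}$, and that the extremal configuration is attained by decomposable $\omega$. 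Making this precise, by computing the action of the $\rho_{p}(E_{a})$ on a decomposable $\omega=e_{1}\wedge\cdots\wedge e_{p}$ relative to the eigenbasis of $\mathfrak{R}$ and reducing the general quadratic-form inequality to the decomposable case, is the main obstacle; this is also where $p\leq\floor{\tfrac{n}{2}}$ is used. The companion bound on $\Lambda^{n-p}$ follows since the parallel Hodge star conjugates $\mathcal{R}_{n-p}$ into $\mathcal{R}_{p}$, so the two have identical spectra.

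Granting the estimate, the vanishing assertions are immediate. If $\lambda_{1}+\cdots+\lambda_{n-p}>0$ pointwise then $\mathcal{R}_{p}>0$ and $\mathcal{R}_{n-p}>0$, so integrating the Weitzenb\"ock formula against a harmonic form $\omega$ gives $0=\int_{M}|\nabla\omega|^{2}+\langle\mathcal{R}_{\bullet}\omega,\omega\rangle>0$ unless $\omega\equiv0$; hence $b_{p}(M)=b_{n-p}(M)=0$ by Hodge theory, passing to the oriented double cover if $M$ is non-orientable (the curvature hypothesis is unaffected and $b_{k}(M;\R)\leq b_{k}(\widetilde{M};\R)$). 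If only $\lambda_{1}+\cdots+\lambda_{n-p}\geq0$, that is $\kappa=0$, the same computation forces $\int_{M}|\nabla\omega|^{2}\leq0$, so every harmonic $p$-form is parallel.

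For the quantitative estimate when $\kappa<0$ and $\diam M\leq D$, I would first observe that $\tfrac{1}{n-p}(\lambda_{1}+\cdots+\lambda_{n-p})\geq\kappa$ implies, again by the trace-on-a-subspace argument and $p\leq\tfrac{n}{2}$, the Ricci bound $\Ric\geq(n-1)\kappa\, g$, while the algebraic estimate gives $\mathcal{R}_{p}\geq p(n-p)\kappa\,\id$. For a harmonic $p$-form $\omega$, combining the Bochner identity with Kato's inequality shows that $u:=|\omega|$ is a weak subsolution of $\Delta u\leq a\,u$ with $a:=-\kappa\, p(n-p)\geq0$. A Moser iteration based on Gallot's Sobolev inequality — whose constant depends only on $n$ and $\kappa D^{2}$ — carried out with careful bookkeeping of the zeroth-order term $a$ (the refinement of Gallot and Gallot--Meyer) then yields
\[
\sup_{M}u^{2}\ \leq\ \exp\!\bigl(C(n,\kappa D^{2})\sqrt{a D^{2}}\bigr)\cdot\frac{1}{\Vol M}\int_{M}u^{2},
\]
and the constant degenerates correctly to $1$ as $a\to0$, since a nonnegative function with $\Delta u\leq0$ on a closed manifold is constant. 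Finally, for an $L^{2}$-orthonormal basis $\omega_{1},\dots,\omega_{b_{p}}$ of the harmonic $p$-forms and any $x\in M$, the evaluation map $E_{x}\colon\omega\mapsto\omega(x)$ has rank at most $\binom{n}{p}$ and, by the displayed bound, $\|E_{x}\omega\|^{2}\leq\exp(C(n,\kappa D^{2})\sqrt{-\kappa D^{2}p(n-p)})\,\|\omega\|_{L^{2}}^{2}/\Vol M$, so
\[
\sum_{i}|\omega_{i}(x)|^{2}=\tr(E_{x}^{*}E_{x})\leq\binom{n}{p}\exp\!\bigl(C(n,\kappa D^{2})\sqrt{-\kappa D^{2}p(n-p)}\bigr)\cdot\frac{1}{\Vol M}.
\]
Integrating over $M$ gives the asserted bound on $b_{p}(M)$, and since the exponent tends to $0$ as $\kappa D^{2}\to0$ while $b_{p}(M)\in\Z$, there is $\varepsilon(n)>0$ with $\kappa D^{2}\geq-\varepsilon(n)$ forcing $b_{p}(M)\leq\binom{n}{p}$.
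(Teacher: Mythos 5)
Your overall strategy is the same as the paper's: a Weitzenb\"ock argument on harmonic $p$-forms, the identity $\sum_a|\rho_p(E_a)\omega|^2=p(n-p)|\omega|^2$ (proposition \ref{NormsOfHats}), a lower bound for $\sum_a\lambda_a|\rho_p(E_a)\omega|^2$ in terms of the lowest $n-p$ eigenvalues (lemma \ref{GeneralBochnerTermEstimate}), and the Li--Gallot Moser-iteration machinery with the evaluation-map trace argument (theorem \ref{BochnerTechniqueMachinery}); those outer layers of your write-up, including the orientation double cover, Hodge duality, and the deduction $\Ric\geq(n-1)\kappa$, are fine. The genuine gap is precisely the step you yourself flag as ``the main obstacle'': the pointwise bound $c_a=|\rho_p(E_a)\omega|^2\leq p\,|\omega|^2\,|E_a|^2$ for $p\leq\floor{\frac{n}{2}}$ (the paper's lemma \ref{EstimatesForTensors}(c)), which is exactly what turns the constraints $c_a\geq 0$, $\sum_a c_a=p(n-p)|\omega|^2$ into your estimate $\sum_a\lambda_ac_a\geq p(\lambda_1+\cdots+\lambda_{n-p})|\omega|^2$. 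You never prove it, and the constraint you state in its place (``no more than weight $p(n-p)$ on any $n-p$ of the directions'') is vacuous, since $p(n-p)$ is the total weight; what is needed is the cap $c_a\leq p|\omega|^2$ on each single direction.

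Moreover, the route you sketch toward that bound --- compute on decomposable $\omega=e_1\wedge\cdots\wedge e_p$ and ``reduce the general quadratic-form inequality to the decomposable case'' --- would not work. For decomposable $\omega$ a block decomposition of $L$ relative to $\operatorname{span}(e_1,\ldots,e_p)$ and its complement shows $|\rho_p(L)\omega|^2=\|B\|_{HS}^2\leq|L|^2|\omega|^2$, where $B$ is the off-diagonal block; so decomposable forms only ever exhibit the constant $1$, while the extremizers realizing the constant $p$ are highly non-decomposable (example \ref{OptimalityPForms}: $L=e_1\wedge e_2+\cdots+e_{2p-1}\wedge e_{2p}$ and $\omega$ a signed sum of $2^{p-1}$ decomposable terms). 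Verifying a quadratic inequality on decomposables does not imply it on their span, and the extremal configuration is not attained there, so this reduction is a dead end. The paper's proof instead puts $L$ into canonical block-rotation form, applies Cauchy--Schwarz to the $p$ terms of $(L\omega)(e_{i_1},\ldots,e_{i_p})$ for an increasing multi-index, and observes that the rotation coefficients of $L$ that actually appear are pairwise distinct (any coincidence forces the corresponding terms to vanish), so only $p$ of the coefficients enter; this is where $p\leq n-p$ is used. Without this lemma, or a substitute for it, your argument establishes the theorem only for $p=1$.
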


Recall that the curvature operator of a Riemannian manifold is called $l$-positive if the sum of its  lowest $l$ eigenvalues is positive.

\begin{corollary*}
Let $n \geq 3$ and let $(M,g)$ be a closed $n$-dimensional Riemannian manifold. If the curvature operator is  $\ceil{\frac{n}{2}}$-positive, then $b_p(M) = 0$ for $0 < p < n.$
\end{corollary*}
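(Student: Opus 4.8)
The plan is to deduce this directly from Theorem \ref{BettiNumberEstimate}, with only one small observation needed as a preliminary. That observation is that $k$-positivity of the curvature operator implies $l$-positivity for every $l \geq k$. Indeed, if $\lambda_1 + \ldots + \lambda_k > 0$, then since $\lambda_k$ is the largest of $\lambda_1, \ldots, \lambda_k$ we have
\[
\lambda_{k+1} \geq \lambda_k \geq \frac{\lambda_1 + \ldots + \lambda_k}{k} > 0,
\]
and hence $\lambda_1 + \ldots + \lambda_{k+1} > \lambda_1 + \ldots + \lambda_k > 0$; the claim follows by induction. This is the one place where a little care is warranted, since a priori adjoining further (possibly negative) eigenvalues could destroy positivity of the partial sum.

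Next I would fix $p$ with $1 \leq p \leq \floor{\frac{n}{2}}$ and note the elementary identity $n - \floor{\frac{n}{2}} = \ceil{\frac{n}{2}}$, so that $n - p \geq \ceil{\frac{n}{2}}$. By the preliminary observation, $\ceil{\frac{n}{2}}$-positivity then forces $\lambda_1 + \ldots + \lambda_{n-p} > 0$, and Theorem \ref{BettiNumberEstimate} yields $b_p(M) = b_{n-p}(M) = 0$.

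Finally, as $p$ runs through $1, \ldots, \floor{\frac{n}{2}}$, the index $n-p$ runs through $\ceil{\frac{n}{2}}, \ldots, n-1$, so the pairs $\{p, n-p\}$ exhaust all integers strictly between $0$ and $n$. Hence $b_p(M) = 0$ for every $0 < p < n$, completing the proof. I do not anticipate any real obstacle here: the content is entirely contained in Theorem \ref{BettiNumberEstimate}, and the only things to get right are the monotonicity of the partial sums of eigenvalues and the bookkeeping with floors and ceilings.
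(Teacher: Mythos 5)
Your proposal is correct and is essentially the paper's intended argument: the corollary is stated as an immediate consequence of Theorem \ref{BettiNumberEstimate}, using exactly the facts you spell out, namely that $\ceil{\frac{n}{2}}$-positivity implies $(n-p)$-positivity for all $p \leq \floor{\frac{n}{2}}$ (since the largest of the first $k$ ordered eigenvalues is at least their average) and that the pairs $\{p,n-p\}$ exhaust $0 < p < n$. No gaps; your monotonicity observation is the only point needing care and you handle it correctly.
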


The proof of Theorem \ref{BettiNumberEstimate} relies on the Bochner technique, which goes back to Bochner \cite{BochnerVectoreFieldsAndRic} who proved that the first Betti number of compact manifolds with positive Ricci curvature vanishes. For a more detailed account of the early developments of the Bochner technique the reader is referred to Yano-Bochner \cite{YanoBochnerCurvAndBetti}.
 
The theme of establishing vanishing results for the Betti numbers was continued by Berger \cite{BergerTwoFormsVanishCurvOperator} and D. Meyer \cite{DMeyerCurvOpPos} for manifolds with positive curvature operator. Micallef-Wang \cite{MicallefWangNIC} proved that the second Betti number of even dimensional manifolds with positive isotropic curvature vanishes and Dussan-Norohan \cite{DussanNoronhaNICandPureCurv} obtained a vanishing result for the Betti numbers of manifolds with nonnegative isotropic curvature provided that the curvature tensor is pure.

Using different techniques Micallef-Moore \cite{MicallefMoorePIC} proved that simply connected compact manifolds with positive isotropic curvature are homotopy spheres. \vspace{2mm}

The Ricci flow has been used extensively to obtain classification results, which in particular imply Bochner vanishing-type theorems. For example, Hamilton \cite{Hamilton3DimRF}, \cite{Hamilton4DimRFposCurvOp}, Chen \cite{ChenQuarterPinching} and B\"ohm-Wilking \cite{BW2} showed that manifolds with positive, in fact $2$-positive, curvature operators are space forms. Brendle-Schoen \cite{BrendleSchoenSphereTheorem} and Brendle \cite{BrendleConvergenceInHihgerDimensions} showed that this is more generally the case for manifolds whose product with $\R^2$ and $\R$, respectively, has positive isotropic curvature. A crucial observation is that these curvature conditions, as well as the corresponding nonnegativity conditions, are preserved by the Ricci flow. 

In contrast, B\"ohm-Wilking \cite{BW2} remarked that $3$-positivity is not preserved by the Ricci flow in dimensions $n \geq 5.$ However, notice that for $n=5,6$ Theorem \ref{BettiNumberEstimate} implies a vanishing result for the Betti numbers of manifolds with $3$-positive curvature operator. 

In regard to Ricci flow invariant curvature conditions the following example is also noteworthy:

\begin{example*} \normalfont
Doubly warped product metrics on $S^n$ show that in dimensions $n \geq 6$ the class of manifolds which satisfy the curvature condition $\lambda_1 + \ldots + \lambda_{n-p} > 0$ of Theorem \ref{BettiNumberEstimate} is different from the class of manifolds with positive isotropic curvature (specifically for $p=1$ in dimensions $n \geq 6$ and for $p=\floor{\frac{n}{2}}$ in dimensions $n \geq 9)$. The two classes overlap but neither is contained in the other.

Furthermore, there are metrics on $S^n,$ $n \geq 5,$ which do not induce metrics of positive isotropic curvature on $S^n \times \R$, so that the eigenvalues of the curvature operator satisfy $\lambda_1 = \ldots = \lambda_p < 0$ and $\lambda_1 + \ldots + \lambda_{p+1} > 0$ for $p=2, \ldots, n-3.$

Example \ref{DoublyWarpedProducts} shows that these metrics can, in fact, be chosen $\mathcal{C}^1$-close to the round metric.
\end{example*}

Compact manifolds with positive isotropic curvature have been classified  by Hamilton \cite{HamiltonFourPIC}, Chen-Zhu \cite{ChenZhuRFwithSurgeryFourPIC} and Chen-Tang-Zhu \cite{ChenTangZhuClassFourPIC} in dimension $n=4$ and  by Brendle \cite{BrendleRFwithSurgeryPIC} and Huang \cite{HuangManifoldsPIC} in dimensions $n \geq 12$.


\vspace{2mm}

In view of Theorem \ref{BettiNumberEstimate} it is natural to ask: 

\begin{question*} \normalfont
Are there closed, simply connected Riemannian manifolds with $\lambda_1 + \ldots + \lambda_{n-1} >0$ and large second Betti number? Notice that $\C P^2$ is $3$-positive with $b_2 = 1.$ 

Are there closed, simply connected Riemannian manifolds with $\lambda_1 + \ldots + \lambda_{\ceil{\frac{n}{2}}} >0$ and torsion in homology? 

It is currently not known if manifolds with $\lambda_1 + \ldots + \lambda_{\ceil{\frac{n}{2}}} >0$ are diffeomorphic to space forms.
\end{question*}

In \cite{HoelzelSurgeryStableCurvCond} Hoelzel established a surgery procedure for manifolds that satisfy a point-wise curvature condition. For instance, this generalizes Micallef-Wang's \cite{MicallefWangNIC} result that positive isotropic curvature is preserved under connected sums. \vspace{2mm}

Many of the above mentioned results also have rigidity analogues in case of the corresponding nonnegativity conditions. In the context of the Bochner technique this goes back to Gallot-Meyer \cite{GallotMeyerCurvOperatorAndForms} who considered manifolds with nonnegative curvature operator. The more general results due to Ni-Wu \cite{NiWuNonnegativeCurvatureOperator}, Brendle-Schoen 
\cite{BrendleSchoenWeaklyQuarterPinched}, Seshadri \cite{SeshadriNIC} and Brendle \cite{BrendleEinsteinNIC} again rely on Ricci flow techniques. 

Cheeger \cite{CheegerVanishingPiecewiseConstCurv} adapted the Bochner technique to singular spaces and proved a vanishing theorem for spaces with positive piecewise constant curvature, as well as the corresponding rigidity theorem. As Cheeger  points out, these results indicate that spaces with nonnegative piecewise constant curvature may be regarded as a non-smooth analogue of manifolds with nonnegative curvature operator.
\vspace{2mm}

Based on work of P. Li \cite{LiSobolevConstant}, Gallot \cite{GallotSobolevEstimates} further generalized the Bochner technique and proved Theorem \ref{BettiNumberEstimate} in the case that the curvature operator is bounded from below by $\kappa \leq 0$ and the diameter is bounded above by $D>0.$ The proof of Theorem \ref{BettiNumberEstimate} also relies on the techniques developed by P. Li and Gallot. 

Remarkably, in the context of sectional curvature Gromov \cite{GromovCurvDiamBetti} established similar bounds on the Betti numbers using purely geometric ideas. 

\vspace{2mm}

With regard to classification results for manifolds with a nonnegativity condition on the sum of the lowest eigenvalues, Theorem \ref{BettiNumberEstimate} is mainly interesting in the case of generic holonomy. Otherwise it reduces to previous results due to Gallot-Meyer \cite{GallotMeyerCurvOperatorAndForms}, B\"ohm-Wilking \cite{BW2} and Mok \cite{MokUniformizationKaehler}:

\begin{remark*} \normalfont
Suppose that $(M,g)$ is $n$-dimensional and locally reducible. If the curvature operator is $(n-1)$-nonnegative, then the curvature operator is nonnegative. Similarly, if $\lambda_{n} >0,$ then $\lambda_1 = \ldots = \lambda_{n-1}= 0.$

Suppose that $(M,g)$ is  $n$-dimensional, locally irreducible, and has special holonomy. If the curvature operator is $\left( \frac{1}{4}n(n-2) \right)$-nonnegative, then the curvature operator is nonnegative. Similarly, if $\lambda_{\frac{1}{4}n(n-2)+1} >0,$ then $\lambda_1 = \ldots = \lambda_{\frac{1}{4}n(n-2)}= 0.$
\end{remark*}

Combined with Theorem \ref{BettiNumberEstimate} these observations lead to the following result.

\begin{corollary*}
Let $(M,g)$ be a closed connected $n$-dimensional Riemannian manifold with restricted holonomy $SO(n)$. 

If the eigenvalues of the curvature operator satisfy $\lambda_1 + \ldots + \lambda_{\ceil{\frac{n}{2}}} \geq 0$, then $b_p(M)=0$ for $0 < p < n.$
\end{corollary*}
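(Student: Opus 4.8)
The plan is to deduce this from the rigidity part of Theorem~\ref{BettiNumberEstimate}, which is the only serious input; the rest is bookkeeping. First I would reduce to the orientable case and to the range $1 \le p \le \floor{\frac n2}$. If $M$ is non-orientable, I replace it by its oriented double cover $\hat M$: this is again closed and connected, it satisfies the same eigenvalue hypothesis (which is pointwise), and it has restricted holonomy $SO(n)$ as well, since the restricted holonomy group is unchanged under passing to a Riemannian covering. As $b_p(M) \le b_p(\hat M)$ for every $p$ (real cohomology of the base is a direct summand of that of a finite cover), it suffices to prove the statement for $\hat M$, so we may assume $M$ orientable. Poincaré duality then gives $b_p(M) = b_{n-p}(M)$, so it is enough to treat $1 \le p \le \floor{\frac n2}$.

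Next I would carry out an elementary eigenvalue estimate. Put $q = \ceil{\frac n2}$. Since $\lambda_1 \le \cdots \le \lambda_{\binom n2}$ and $\lambda_1 + \cdots + \lambda_q \ge 0$, the largest of the first $q$ eigenvalues satisfies $\lambda_q \ge \frac1q\left(\lambda_1 + \cdots + \lambda_q\right) \ge 0$, hence $\lambda_j \ge 0$ for all $j \ge q$. For $1 \le p \le \floor{\frac n2}$ we have $n-p \ge n - \floor{\frac n2} = q$, so $\lambda_1 + \cdots + \lambda_{n-p} = \left(\lambda_1 + \cdots + \lambda_q\right) + \left(\lambda_{q+1} + \cdots + \lambda_{n-p}\right) \ge 0$, and in particular $\frac{1}{n-p}\left(\lambda_1 + \cdots + \lambda_{n-p}\right) \ge 0$. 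Thus the hypothesis of the rigidity statement in Theorem~\ref{BettiNumberEstimate} holds with $\kappa = 0$, so every harmonic $p$-form on $M$ is parallel, and by Hodge theory $b_p(M)$ equals the dimension of the space of parallel $p$-forms.

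Finally I would use the holonomy hypothesis. A parallel $p$-form is, at each point $x$, fixed by parallel transport around every loop at $x$, hence by the identity component $\operatorname{Hol}^0_x$ of the holonomy group; after fixing an orthonormal basis $e_1,\dots,e_n$ of $T_xM$, the hypothesis says this group is all of $SO(n)$. But for $0 < p < n$ there is no nonzero $SO(n)$-fixed element of $\Lambda^p\R^n$: the rotation which is $-\id$ on $\operatorname{span}(e_a,e_b)$ and $\id$ on its orthogonal complement lies in $SO(n)$ (recall $n \ge 2$), and it multiplies $e_{i_1} \wedge \cdots \wedge e_{i_p}$ by $-1$ precisely when $\{i_1,\dots,i_p\}$ contains exactly one of $a,b$; letting $\{a,b\}$ range over all pairs forces every coefficient of an $SO(n)$-fixed element indexed by a subset $I$ with $0 < |I| < n$ to vanish. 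Hence the parallel $p$-form vanishes identically, so $b_p(M) = 0$. (Equivalently, $\Lambda^p\R^n$ has no trivial $SO(n)$-summand for $0 < p < n$. The complementary cases of reducible or special restricted holonomy are not covered by this corollary; that is where the preceding Remark, combined with the classification results of Gallot--Meyer, B\"ohm--Wilking and Mok, would enter.)

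I expect the only genuine difficulty to be the one imported from elsewhere, namely the rigidity conclusion of Theorem~\ref{BettiNumberEstimate} itself. The three steps above — the reduction via the oriented double cover and Poincaré duality, the eigenvalue inequality, and the vanishing of $SO(n)$-invariant $p$-forms — are all routine and present no real obstacle.
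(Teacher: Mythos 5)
Your argument is correct and is exactly the proof the paper intends (the corollary is stated without proof): the ordering of the eigenvalues upgrades $\lambda_1+\cdots+\lambda_{\ceil{\frac n2}}\geq 0$ to $\lambda_1+\cdots+\lambda_{n-p}\geq 0$, the $\kappa=0$ rigidity case of Theorem~\ref{BettiNumberEstimate} makes harmonic $p$-forms parallel, and the absence of nonzero $SO(n)$-invariant elements of $\Lambda^p\R^n$ for $0<p<n$ forces them to vanish. The reduction to the orientable case and to $p\leq\floor{\frac n2}$ via the double cover and Poincar\'e duality matches the paper's own handling of Theorem~\ref{BettiNumberEstimate}, so there is nothing to add.
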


Another application of our method yields a generalization of a theorem due to Tachibana \cite{TachibanaPosCurvOperator}. 

\begin{maintheorem}
Let $(M,g)$ be a closed connected $n$-dimensional Einstein manifold. If the eigenvalues $\lambda_1 \leq \ldots \leq \lambda_{\binom{n}{2}}$ of the curvature operator satisfy 
\begin{align*}
 \lambda_1 + \lambda_2 \geq 0  & \ \text{for} \ n=4 \ \text{or} \\
 \lambda_1 + \ldots + \lambda_{\floor{\frac{n-1}{2}}} \geq 0 &   \ \text{for} \ n \geq 5,
\end{align*}
then the curvature tensor is parallel. Moreover, if the inequality is strict, then $(M,g)$ has constant sectional curvature.
\label{GeneralizationTachibana}
\end{maintheorem}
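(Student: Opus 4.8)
\emph{Proof proposal.} The plan is to carry out the Bochner technique on the curvature tensor itself, in the spirit of Tachibana. On an Einstein manifold $\Rm$, regarded as a $2$-form with values in $\mathfrak{so}(TM)\cong\Lambda^2TM$, is closed by the second Bianchi identity and co-closed by the contracted second Bianchi identity together with $\nabla\Ric=0$, so that $\Rm$ is a harmonic section of $\Curv$. Since $g$ is parallel the scalar and tracefree Ricci components of $\Rm$ are themselves parallel, so $\nabla\Rm=0$ is equivalent to $\nabla W=0$ for the Weyl tensor and the argument may equally be run with $W$. On a closed manifold the associated Weitzenb\"ock formula integrates to
\[
0=\int_M|\nabla\Rm|^2\,\dVol+\int_M\langle\mathfrak{W}(\Rm),\Rm\rangle\,\dVol,
\]
where $\mathfrak{W}$ is the Weitzenb\"ock curvature term on $\Curv$. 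Because the coefficient bundle $\Lambda^2TM$ is built from the metric, $\mathfrak{W}$ is a universal quadratic expression in the curvature operator on $\Lambda^2TM$, combining the $\Lambda^2$-Weitzenb\"ock term with the curvature of the coefficients; it is the same quadratic expression in the curvature operator that governs its evolution under the Ricci flow.

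The crux is an algebraic estimate for $\mathfrak{W}$, analogous to the eigenvalue estimate underlying Theorem \ref{BettiNumberEstimate}: if $R\in\Curv$ is an \emph{Einstein} algebraic curvature operator (one lying in the scalar-plus-Weyl subspace) whose eigenvalues $\lambda_1\leq\dots\leq\lambda_{\binom n2}$ satisfy $\lambda_1+\lambda_2\geq0$ for $n=4$, resp.\ $\lambda_1+\dots+\lambda_{\floor{\frac{n-1}{2}}}\geq0$ for $n\geq5$, then $\langle\mathfrak{W}(R),R\rangle\geq0$, and when the inequality is borderline the kernel of $\mathfrak{W}$ on this subspace is exactly the line $\R\,\id$ of constant-curvature operators. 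For $n=4$ one splits $\Lambda^2=\Lambda^+\oplus\Lambda^-$: the Weyl tensor decomposes as $W^++W^-$ with $W^\pm$ separately harmonic on an Einstein $4$-manifold, the hypothesis $\lambda_1+\lambda_2\geq0$ forces the corresponding $2$-nonnegativity of each $3\times3$ block of the curvature operator by eigenvalue interlacing, and one is reduced to an estimate for the action of $\mathfrak{so}(3)$ on $S^2_0(\R^3)$ — which is why $\lambda_1+\lambda_2$ rather than $\lambda_1$ is enough in dimension $4$. For $n\geq5$ one works directly on $\Curv$, using the first Bianchi identity (and the Einstein reduction to the Weyl part) to sharpen the naive bound, so that the much smaller number $\floor{\frac{n-1}{2}}$ of eigenvalues suffices, in contrast with $n-2$ for ordinary $2$-forms.

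Granting this estimate the theorem follows immediately. Both integrands above are pointwise nonnegative, hence each vanishes identically; in particular $\nabla\Rm=0$, which is the first claim. If in addition the eigenvalue inequality is strict at every point, then $\langle\mathfrak{W}(\Rm),\Rm\rangle\equiv0$ together with the strict positivity of $\mathfrak{W}$ off $\R\,\id$ forces $\Rm_x\in\R\,\id$ for every $x$, i.e.\ $W\equiv0$; an Einstein manifold with $n\geq4$ and vanishing Weyl tensor has constant sectional curvature (the scalar curvature being constant since $g$ is Einstein), which is the second claim.

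The main obstacle is the algebraic estimate for $\mathfrak{W}$ together with the identification of its borderline kernel. In the $p$-form setting the Weitzenb\"ock term is simply the image of the curvature operator under a fixed representation of $\mathfrak{so}(n)$; here $\mathfrak{W}$ is quadratic in the curvature operator and must be controlled on the Bianchi-constrained Einstein subspace of $\Curv$, and extracting from it the sharp partial-eigenvalue-sum bound — while accommodating the genuinely different combinatorics of $n=4$ versus $n\geq5$ — is where the real work lies.
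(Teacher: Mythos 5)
Your overall framework is the paper's: on an Einstein manifold $\Rm$ is harmonic, so it satisfies $\nabla^{*}\nabla\Rm+\tfrac12\Ric(\Rm)=0$, and integrating reduces everything to a pointwise algebraic estimate showing that the curvature term is nonnegative under the stated partial eigenvalue sums, with vanishing locus the constant-curvature line. The difficulty is that this estimate is exactly the content of the theorem, and you do not prove it -- you state it as ``the main obstacle'' and ``where the real work lies.'' That is a genuine gap, not a routine verification. Moreover, the heuristics you offer for why it should hold point in a slightly wrong direction. In the paper the mechanism for $n\geq 5$ is: (i) the pointwise bound $|L\Rm|^2\leq 8\,|\Rmzero|^2|L|^2$ for every $L\in\mathfrak{so}(V)$ (lemma \ref{EstimatesForTensors}(d)), which comes from the pair-skew-symmetry of $\Rm$ exactly as in the $p$-form case -- not from the first Bianchi identity; (ii) the identity $|\widehat{\Rm}|^2=4(n-1)|\Rmzero|^2-8|\Riczero|^2$ (proposition \ref{NormsOfHats}), where the first Bianchi identity does enter, so that in the Einstein case $|L\Rm|^2\leq\frac{2}{n-1}|\widehat{\Rm}|^2|L|^2$; and (iii) the general eigenvalue-sum lemma \ref{GeneralBochnerTermEstimate} applied with $C=\frac{n-1}{2}$, which is what produces the threshold $\floor{\frac{n-1}{2}}$ and also yields the rigidity statement ($\widehat{\Rm}=0$, hence pointwise constant curvature, hence constant curvature by Schur). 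None of these steps appears in your proposal, and your suggestion that the Bianchi identity ``sharpens the naive bound'' on $|L\Rm|$ would not by itself produce the factor needed.

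The four-dimensional case is a separate issue. Your reduction to $2$-nonnegativity of each $3\times 3$ block of a Singer--Thorpe diagonalization is correct (at most one eigenvalue can be negative), but the claim that one is then ``reduced to an estimate for the action of $\mathfrak{so}(3)$ on $S^2_0(\R^3)$'' is only the starting point: the paper must compute $g(\mathfrak{R}(\widehat{\Rm}),\widehat{\Rm})=16\{\lambda_1(\lambda_2-\lambda_3)^2+\lambda_2(\lambda_1-\lambda_3)^2+\lambda_3(\lambda_1-\lambda_2)^2+\cdots\}$ explicitly (propositions \ref{ComputationBochnerTermWithEigenvalues} and \ref{LRinSingerThorpeBasis}), check by hand that $2$-nonnegativity makes this expression nonnegative, and in the strict case invoke the Bianchi constraint $\lambda_1+\lambda_2+\lambda_3=\lambda_4+\lambda_5+\lambda_6$ to force $\mathfrak{R}$ to be a homothety before applying Schur. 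Your description of the borderline kernel as ``exactly $\R\,\id$'' is the desired conclusion of that computation, not something that follows from block $2$-nonnegativity alone; note also that remark 4.2(b) of the paper exhibits a $3$-nonnegative Einstein operator on $\R^4$ with strictly negative curvature term, so the estimate is delicate and genuinely needs $2$-nonnegativity. (The aside identifying the Weitzenb\"ock term with the Ricci flow reaction term $R^2+R^{\#}$ is inaccurate but harmless.) In short: correct skeleton, but the theorem's substance -- the quantitative estimates (i)--(iii) and the Singer--Thorpe computation -- is missing.
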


In the case of $2$-nonnegative curvature operators this follows from the corresponding classification result due to Ni-Wu \cite{NiWuNonnegativeCurvatureOperator} and the fact that Einstein metrics are fixed points of the Ricci flow. Similarly, the rigidity results due to Brendle-Schoen 
\cite{BrendleSchoenWeaklyQuarterPinched} and Seshadri \cite{SeshadriNIC} yield Tachibana-type theorems. Brendle \cite{BrendleEinsteinNIC} specifically considers Einstein manifolds and shows that Einstein manifolds with nonnegative isotropic curvature are locally symmetric. In dimension $n=4$ this was observed by Micallef-Wang \cite{MicallefWangNIC}.

\vspace{2mm}

The proofs of Theorems \ref{BettiNumberEstimate} and \ref{GeneralizationTachibana} are based on a slight generalization of Poor's \cite{PoorHolonomyProofPosCurvOperatorThm} approach to the Hodge Laplacian. Poor's idea was to consider the derivative of the regular representation on tensors, and then to show that this leads to a simple formula for the curvature term in Lichnerowicz Laplacians. Lemma \ref{GeneralBochnerTermEstimate} offers a new method to control the curvature term based on an understanding how elements of $\mathfrak{so}(n)$ interact with tensors of a specific type. The work of P. Li \cite{LiSobolevConstant} and Gallot \cite{GallotSobolevEstimates} then implies a bound on the dimension of the kernel of the Lichnerowicz Laplacian, see theorem \ref{BochnerTechniqueMachinery}. 

Theorems \ref{BettiNumberEstimate} and \ref{GeneralizationTachibana} are applications of lemma \ref{GeneralBochnerTermEstimate} to $p$-forms and algebraic curvature tensors. Corollary \ref{BochnerForWeyl} covers the particular case of Weyl tensors. An application to $(0,2)$-tensors that instead uses averages of complex sectional curvatures is given in proposition \ref{BochnerZeroTwoTensors}. The required estimates to apply lemma \ref{GeneralBochnerTermEstimate} are established in lemma \ref{EstimatesForTensors} and proposition \ref{NormsOfHats}.

Section \ref{PreliminariesSection} reviews the relevant background material. The key technical lemmas are given in section \ref{TechnicalSection}. The proofs of the main theorems and other geometric applications follow in section \ref{GeometricApplicationsSection}. Section \ref{ExampleSection} contains details of the above doubly warped product metrics on $S^n$ and examples that show that the estimates in section \ref{TechnicalSection} are optimal. Furthermore, it exhibits an $(n-1)$-positive algebraic curvature operator and a $2$-form which yield a negative curvature term in the Bochner formula. Finally, it includes examples that can be used to give a different proof of parts of proposition \ref{NormsOfHats}.

 \vspace{2mm}

General references for background on the Bochner technique are B\'erard \cite{BerardBochnerRevisited}, Goldberg \cite{GoldbergCurvautreAndHolomogy} and Petersen \cite{PetersenRiemGeom}.

 \vspace{2mm}

\textit{Acknowledgments.} We would like to thank Christoph B{\"o}hm for constructive comments on a previous version of the paper.

\section{Preliminaries}
\label{PreliminariesSection}

\subsection{Tensors}
Let $(V,g)$ be an $n$-dimensional Euclidean vector space. The vector space of $(0,k)$-tensors on $V$ will be denoted by $\mathcal{T}^{(0,k)}(V)$ and the vector space of symmetric $(0,2)$-tensors by $\operatorname{Sym}^2(V).$

Recall that there is an orthogonal decomposition 
\begin{equation*}
\operatorname{Sym}^2(\Lambda^2V) = \operatorname{Sym}_B^2(\Lambda^2V) \oplus \Lambda^4V,
\end{equation*}
where the vector space $\operatorname{Sym}_B^2(\Lambda^2V)$ consists of all tensors $T \in \operatorname{Sym}^2(\Lambda^2V)$ that also satisfy the first Bianchi identity. Any $R \in \operatorname{Sym}_B^2(\Lambda^2V)$ is called an algebraic curvature tensor. 

The following norms and inner products for tensors, whose components are with respect to an arbitrary choice of an orthonormal basis, will be used throughout: When $T \in \mathcal{T}^{(0,k)}(V)$ define
\begin{equation*}
|T|^2 = \sum_{i_1, \ldots, i_k} \left( T_{i_1 \ldots i_k} \right)^2
\end{equation*}
whereas for a $p$-form $\omega \in \Lambda^p V^{*}$ set
\begin{equation*}
| \omega |^2 = \sum_{i_1 < \ldots < i_p} \left( \omega_{i_1 \ldots i_p} \right)^2.
\end{equation*}

Similarly, if $ \lbrace e_i \rbrace_{i=1, \ldots, n}$ is an orthonormal basis for $V$, then $\left\lbrace  e_{i_1} \wedge \ldots \wedge e_{i_p} \right\rbrace _{1 \leq i_1 < \ldots < i_p \leq n}$ is an orthonormal basis for $\Lambda^p V.$ This also induces an inner product on $\mathfrak{so}(V)$ via its identification with $\Lambda^2V.$ \vspace{2mm}

The Kulkarni-Nomizu product of $S, T \in \operatorname{Sym}^2(V)$ is given by
\begin{align*}
(S \owedge T)(X,Y,Z,W) = & \ S(X,Z)T(Y,W)-S(X,W)T(Y,Z) \\
& +S(Y,W)T(X,Z)-S(Y,Z)T(X,W).
\end{align*}
In particular, the tensor
\begin{equation*}
(g \owedge g)(X,Y,Z,W) = 2 \left\{ g(X,Z)g(Y,W)-g(X,W)g(Y,Z) \right\}
\end{equation*}
corresponds to the curvature tensor of the sphere of radius $1/\sqrt{2}.$

\begin{proposition}
If $h \in \operatorname{Sym}^2(V)$, then 
\begin{equation*}
| g \owedge h |^2 = 4(n-2) |h |^2 + 4 \tr(h)^2.
\end{equation*}
In particular, $| g \owedge g |^2 = 8 (n-1)n.$
\label{NormOfKNProduct}
\end{proposition}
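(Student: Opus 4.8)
The plan is to compute both sides in components relative to an orthonormal basis $\{e_i\}$ of $V$, for which $g_{ij} = \delta_{ij}$. Expanding the definition of the Kulkarni--Nomizu product gives
\begin{equation*}
(g \owedge h)_{ijkl} = \delta_{ik} h_{jl} - \delta_{il} h_{jk} + \delta_{jl} h_{ik} - \delta_{jk} h_{il},
\end{equation*}
so that $|g \owedge h|^2 = \sum_{i,j,k,l} \big( (g \owedge h)_{ijkl} \big)^2$ is the sum of the four squared terms and six cross terms obtained from expanding a square of the form $(A - B + C - D)^2$.

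First I would handle the four squared terms. Each contributes $n|h|^2$; for instance $\sum_{i,j,k,l} \delta_{ik}^2\, h_{jl}^2 = \big( \sum_{i,k} \delta_{ik} \big)\big( \sum_{j,l} h_{jl}^2 \big) = n |h|^2$, using that $|h|^2 = \sum_{i,j} h_{ij}^2$ for a symmetric $(0,2)$-tensor. Next I would evaluate the cross terms. Using the symmetry of $h$ one checks that each product of two Kronecker deltas either forces three of the summation indices to agree, leaving a factor that sums to $|h|^2$, or forces two disjoint pairs of indices to agree, leaving $h_{ii} h_{jj}$, which sums to $\tr(h)^2$. Concretely, of the six cross terms four evaluate to $|h|^2$ and two to $\tr(h)^2$, and tracking the signs the cross terms contribute $-8|h|^2 + 4\tr(h)^2$ in total. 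Adding this to $4n|h|^2$ yields
\begin{equation*}
|g \owedge h|^2 = 4n|h|^2 - 8|h|^2 + 4\tr(h)^2 = 4(n-2)|h|^2 + 4\tr(h)^2 .
\end{equation*}

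For the final assertion I would simply specialize to $h = g$, where $\tr(g) = n$ and $|g|^2 = \sum_{i,j} \delta_{ij}^2 = n$, so the formula gives $|g \owedge g|^2 = 4(n-2)n + 4n^2 = 8(n-1)n$.

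The argument is essentially a bookkeeping exercise: the only genuine content is identifying which Kronecker contraction collapses to $|h|^2$ and which to $\tr(h)^2$, and assembling the correct multiplicities and signs. The symmetry of $h$ and of $\delta$ reduces the distinct computations to just these two cases, so I do not expect any real obstacle.
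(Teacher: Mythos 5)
Your proof is correct and follows essentially the same route as the paper: a direct computation of $|g \owedge h|^2$ in components relative to an orthonormal basis. The only difference is that the paper first diagonalizes $h$, which makes the nonzero components of $g \owedge h$ explicit (they are $\pm(h_{ii}+h_{jj})$ for $\{i,j\}=\{k,l\}$, $i \neq j$) and thereby avoids the cross-term bookkeeping, whereas you expand the general square; your multiplicities and signs (four cross terms contributing $-2|h|^2$ each and two contributing $+2\tr(h)^2$ each, totalling $-8|h|^2+4\tr(h)^2$) are exactly right, as is the specialization $h=g$.
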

\begin{proof}
By using an orthonormal basis $\left\lbrace e_i \right\rbrace$  for $V$ that diagonalizes $h$ one obtains:
\begin{equation*}
( g\owedge h)_{ijkl}  = 
\begin{cases} 
 h_{ii} + h_{jj} & \ \text{if} \ i=k \neq j=l, \\
-h_{ii} - h_{jj} & \ \text{if} \ i=l \neq j=k, \\
0 & \ \text{otherwise}.
\end{cases}
\end{equation*}
Hence
\begin{align*}
| g \owedge h |^2 & = \sum_{i,j,k,l} | (g \owedge h)_{ijkl} |^2 = 4 \sum_{i<j, k<l} | (g \owedge h)_{ijkl} |^2 = 4 \sum_{i<j} (h_{ii} + h_{jj})^2 \\
& = 2 \sum_{i \neq j} (h_{ii}+h_{jj})^2 = 2 \sum_{i, j} (h_{ii} + h_{jj})^2 -2 \sum_{i} (2 h_{ii})^2 \\ 
& = 2 ( 2n |h|^2 + 2 \tr(h)^2 - 4 |h|^2 ) \\
& = 4(n-2) |h|^2 + 4 \tr(h)^2
\end{align*}
as claimed.
\end{proof}

Recall that every algebraic $(0,4)$-curvature tensor $\Rm$ satisfies the orthogonal decomposition
\begin{equation*}
\Rm = \frac{\scal}{2(n-1)n} g \owedge g + \frac{1}{n-2} g \owedge \mathring{\Ric} + W,
\end{equation*}
where $\Riczero = \Ric - \frac{\scal}{n} g$ is the trace-free Ricci tensor and $W$ denotes the Weyl part. The associated algebraic curvature operator $\mathfrak{R} \colon \Lambda^2 V \to \Lambda^2 V$ is defined by 
\begin{align*}
g( \mathfrak{R}( x \wedge y), z \wedge w ) = \Rm(x,y,z,w).
\end{align*}
Note that the induced algebraic curvature tensor $R \in \operatorname{Sym}_B^2(\Lambda^2V)$ satisfies
\begin{align*}
| \Rm |^2 = 4 | R |^2.
\end{align*}


\subsection{The regular representation} 
\label{SubsectionTheRegularRepresentation}
The derivative of the regular representation of $O(n)$ on $(V,g)$ induces a derivation on tensors: If $T \in \mathcal{T}^{(0,k)}(V)$ and $L \in \mathfrak{so}(V),$ then
\begin{align*}
(LT)(X_1, \ldots, X_k) = - \sum_{i=1}^k T(X_1, \ldots, LX_i, \ldots, X_k).
\end{align*}
Notice that the metric $g$ satisfies $Lg = 0$ for all $L \in \mathfrak{so}(V)$ since
\begin{equation*}
(Lg)(X,Y) = - g(LX,Y) - g(X,LY) = - g(LX,Y) + g(LX, Y) =0.
\end{equation*}

\begin{proposition} When $\sigma \in S_k$ is a permutation and $T \in \mathcal{T}^{(0,k)}(V)$, then 
\begin{equation*}
(LT) \circ \sigma = L (T \circ \sigma )
\end{equation*}
for all $L \in \mathfrak{so}(V).$

In particular, for $S, T \in \operatorname{Sym}^2(V)$ the Kulkarni-Nomizu product satisfies
\begin{equation*}
L ( S \owedge T ) = (LS) \owedge T + S \owedge (LT)
\end{equation*}
for all $L \in \mathfrak{so}(V).$
\label{LeibnizForKNProduct}
\end{proposition}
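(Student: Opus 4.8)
The plan is to reduce both statements to the already-established fact that $Lg=0$ for all $L\in\mathfrak{so}(V)$, together with the definition of the derivation action on tensors. First I would prove the permutation-equivariance claim $(LT)\circ\sigma = L(T\circ\sigma)$ by unwinding both sides on arbitrary arguments $X_1,\dots,X_k$. On the left, $((LT)\circ\sigma)(X_1,\dots,X_k) = (LT)(X_{\sigma(1)},\dots,X_{\sigma(k)}) = -\sum_{i=1}^k T(X_{\sigma(1)},\dots,LX_{\sigma(i)},\dots,X_{\sigma(k)})$, where in the $i^{\text{th}}$ summand the operator $L$ has been inserted in the slot holding $X_{\sigma(i)}$. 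On the right, $(L(T\circ\sigma))(X_1,\dots,X_k) = -\sum_{j=1}^k (T\circ\sigma)(X_1,\dots,LX_j,\dots,X_k)$, and expanding $T\circ\sigma$ in the $j^{\text{th}}$ summand gives $-\sum_{j=1}^k T(\dots)$ where $L$ now sits in the slot of $T$ corresponding to the position $\sigma^{-1}(j)$ in the reordered list. The two sums therefore agree after the reindexing $j=\sigma(i)$; this is purely bookkeeping and I would present it compactly.

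Next I would deduce the Leibniz rule for the Kulkarni--Nomizu product. The cleanest route is to observe that $S\owedge T$ can be written as a sum over a finite set of permutations $\sigma\in S_4$ of terms of the form $\pm\, (S\otimes T)\circ\sigma$, i.e.\ $S\owedge T$ is obtained from the $(0,4)$-tensor $S\otimes T$ by the antisymmetrization operations built into its definition. Since the derivation $L$ is linear and, by the first part, commutes with composition by permutations, it suffices to check the product rule on a single tensor product $S\otimes T$. For that one computes directly from the definition: $L(S\otimes T)(X_1,X_2,X_3,X_4) = -\sum_{i=1}^4 (S\otimes T)(X_1,\dots,LX_i,\dots,X_4)$; splitting the sum into $i\in\{1,2\}$ (which act on the $S$-factor) and $i\in\{3,4\}$ (which act on the $T$-factor) yields exactly $(LS)\otimes T + S\otimes(LT)$. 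Transporting this through the permutation sum defining $\owedge$ gives $L(S\owedge T) = (LS)\owedge T + S\owedge(LT)$.

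An alternative, more hands-on derivation of the second statement avoids the tensor-product observation: expand $L(S\owedge T)$ using the explicit four-term formula for $(S\owedge T)(X,Y,Z,W)$ and the definition of $L$ acting on a $(0,4)$-tensor, then regroup the resulting sixteen terms according to whether $L$ hits a $S$-entry or a $T$-entry. Each group reassembles into $(LS)\owedge T$ or $S\owedge(LT)$ respectively. Either way the computation is routine once the first (permutation) part is in hand.

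I expect the only real subtlety to be the index-chasing in the permutation-equivariance step: one must be careful that "inserting $L$ into the $i^{\text{th}}$ slot and then permuting" matches "permuting and then inserting $L$ into the $\sigma(i)^{\text{th}}$ slot", since the derivation acts slotwise and $\sigma$ relabels the slots. Once that alignment is made precise, the Leibniz rule is a formal consequence, and it is worth remarking that the same argument shows $L$ is a derivation with respect to every natural tensorial product (tensor product, symmetrized product, wedge), not just $\owedge$ --- indeed this is exactly what identifies $L$ as the derivative of the regular representation.
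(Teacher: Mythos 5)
Your proposal is correct and follows essentially the same route as the paper: verify $(LT)\circ\sigma = L(T\circ\sigma)$ by unwinding both sides, write $S\owedge T$ as a signed sum of permuted copies of $S\otimes T$, and use that $L$ acts as a derivation on $S\otimes T$. The only difference is cosmetic: you spell out the tensor-product Leibniz step that the paper leaves implicit, which is fine.
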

\begin{proof}
This is a straightforward calculation:
\begin{align*}
( ( LT ) \circ \sigma )(X_1, \ldots, X_k) & = (LT) \left( X_{\sigma(1)}, \ldots, X_{\sigma(k)} \right) \\
& = - \sum_{j=1}^k T \left( X_{\sigma(1)}, \ldots, L X_{\sigma(j)}, \ldots, X_{\sigma(k)} \right) \\
& = - \sum_{j=1}^k (T \circ \sigma)(X_1, \ldots, L X_j, \ldots, X_k) \\
& = (L ( T \circ \sigma) )(X_1, \ldots, X_k).
\end{align*}
Hence the claim follows from the observation that 
\begin{equation*}
S \owedge T = (S \otimes T) \circ \tau_{23} - (S \otimes T) \circ \tau_{24} +(S \otimes T) \circ \tau_{14} -(S \otimes T) \circ \tau_{13},
\end{equation*}
where $\tau_{ij}$ denotes the transposition of the $i^{\text{th}}$ and $j^{\text{th}}$ entries.
\end{proof}

\begin{proposition}
If $h \in \operatorname{Sym}^2(V)$ and $L \in \mathfrak{so}(V),$ then $\tr(Lh) = 0$.
\end{proposition}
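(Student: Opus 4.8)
The plan is to unwind the definition of the derivation $L$ acting on the $(0,2)$-tensor $h$ and compute the trace with respect to an orthonormal basis $\{e_i\}$ of $V$. By the formula for the regular representation on tensors,
\[
\tr(Lh) = \sum_{i=1}^n (Lh)(e_i,e_i) = -\sum_{i=1}^n \bigl( h(Le_i,e_i) + h(e_i,Le_i) \bigr) = -2\sum_{i=1}^n h(Le_i,e_i),
\]
where the last equality uses the symmetry $h(X,Y)=h(Y,X)$. Thus it suffices to show that $\sum_i h(Le_i,e_i)=0$.

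First I would record that the matrix $L_{ji} := g(Le_i,e_j)$ is skew-symmetric, so in particular $L_{ii}=0$ for every $i$. Choosing the orthonormal basis $\{e_i\}$ so that it also diagonalizes $h$, say $h(e_i,e_j)=\mu_i\,\delta_{ij}$, one gets
\[
h(Le_i,e_i) = \sum_{j=1}^n L_{ji}\, h(e_j,e_i) = L_{ii}\,\mu_i = 0
\]
for each $i$, and summing over $i$ yields the claim. Equivalently, viewing $h$ as the self-adjoint endomorphism $H$ determined by $g(HX,Y)=h(X,Y)$, one has $\sum_i h(Le_i,e_i)=\tr(HL)$, and $\tr(HL)=\tr\bigl((HL)^{t}\bigr)=\tr(L^{t}H^{t})=-\tr(LH)=-\tr(HL)$, forcing the quantity to vanish; this is just the standard fact that the trace of a composition of a symmetric and a skew-symmetric operator is zero.

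Since the argument is essentially a one-line computation, there is no genuine obstacle here; the only point requiring care is matching signs with the sign convention in the definition of $LT$. One could also bypass the computation entirely by noting that the full metric contraction $h \mapsto \tr(h)$ is $O(n)$-invariant and takes values in the trivial representation $\R$, on which $\mathfrak{so}(V)$ acts by zero, so that $\tr(Lh)=L\cdot\tr(h)=0$; this is consistent with the naturality of contractions under the regular representation that underlies the Leibniz-type identities established above.
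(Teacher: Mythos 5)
Your proof is correct and takes essentially the same route as the paper: both reduce $\tr(Lh)$ to $-2\sum_{i} h(Le_i,e_i)$ and then use that the symmetric tensor $h$ pairs to zero against the skew-symmetric form $g(L\cdot,\cdot)$ --- your diagonalization of $h$ and the identity $\tr(HL)=-\tr(HL)$ are just alternative phrasings of that fact. The closing remark on $O(n)$-equivariance of the contraction is a pleasant alternative viewpoint but is not needed.
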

\begin{proof} 
Let $\lbrace e_i \rbrace$ be an orthonormal basis for $V$. It follows that 
\begin{align*}
\tr(Lh) 
& = \sum_{i=1}^n (Lh)(e_i, e_i) 
= -2 \sum_{i=1}^n h( L(e_i), e_i) \\
& = -2 \sum_{i,j=1}^n g(L e_i, e_j) h(e_j, e_i) 
 = -2 \langle h, g( L \cdot, \cdot ) \rangle
\end{align*}
and $\langle h, g( L \cdot, \cdot ) \rangle = - \langle h, g( L \cdot, \cdot ) \rangle = 0$ due to the symmetries of $L$, $g$ and $h.$
\end{proof}

The information on how all $L \in \mathfrak{so}(V)$ interact with a fixed $T \in \mathcal{T}^{(0,k)}$ can be encoded in a tensor $\hat{T}$ with values in $\Lambda^2V.$

\begin{definition}
For $T \in \mathcal{T}^{(0,k)}(V)$ define $\hat{T} \in \Lambda^2V \otimes \mathcal{T}^{(0,k)}(V)$ implicitly by
\begin{equation*}
g( L, \hat{T}(X_1, \ldots, X_k)) = (LT)(X_1, \ldots, X_k)
\end{equation*}
for all $L \in \mathfrak{so}(V) = \Lambda^2V$. 
\end{definition}

Notice that if $\lbrace \Xi_{\alpha} \rbrace$ is an orthonormal basis for $\mathfrak{so}(V) = \Lambda^2V$, then 
\begin{align*}
\hat{T} = \sum_{\alpha} \Xi_{\alpha} \otimes \Xi_{\alpha} T.
\end{align*}
Consequently, 
\begin{equation*}
| \hat{T} |^2 = \sum_{\alpha} | \Xi_{\alpha} T |^2.
\end{equation*}

\begin{exampleroman}
Let $e_1, \ldots, e_n$ be an orthonormal basis for $V$ with dual basis $e^1, \ldots, e^n$ and let $1 \leq i_1 < \ldots < i_p \leq n.$ It is simple to verify that
\begin{align*}
 \widehat{e^{i_1} \wedge \ldots \wedge e^{i_p} } = \sum_{\substack{ j=1, \ldots, p \\ k \notin \lbrace i_1, \ldots, i_p \rbrace}}
 (-1)^{j} \
 e_{\min \lbrace k, i_j \rbrace} \wedge e_{\max \lbrace k, i_j \rbrace } \
  e^k \wedge e^{i_1} \wedge \ldots \wedge \widehat{e^{i_j}} \wedge \ldots \wedge e^{i_p}.
\end{align*}
\end{exampleroman}

The following observation will be crucial for applications to the Bochner technique.

\begin{proposition}
Let $\mathfrak{R} \colon \Lambda^2 V \to \Lambda^2V$ be an algebraic curvature operator and $\lbrace \Xi_{\alpha} \rbrace$ an orthonormal basis for $\Lambda^2V$. It follows that 
\begin{align*}
\mathfrak{R}(\hat{T}) = \mathfrak{R} \circ \hat{T} = \sum_{\alpha} \mathfrak{R}( \Xi_{\alpha} ) \otimes \Xi_{\alpha} T.
\end{align*}
Furthermore, if $\lbrace \Xi_{\alpha} \rbrace$ is an eigenbasis of $\mathfrak{R}$ and $\lbrace \lambda_{\alpha} \rbrace$ denote the corresponding eigenvalues, then 

\begin{align*}
g( \mathfrak{R}(\hat{T}),\hat{T}) = \sum_{\alpha, \beta} g( \mathfrak{R}(\Xi_{\alpha}), \Xi_{\beta}) g(\Xi_{\alpha} T , \Xi_{\beta} T) = \sum_{\alpha} \lambda_{\alpha} | \Xi_{\alpha} T |^2. 
\end{align*} 
\label{ComputationBochnerTermWithEigenvalues}
\end{proposition}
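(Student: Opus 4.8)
The plan is to work directly from the expansion $\hat{T} = \sum_{\alpha} \Xi_{\alpha} \otimes \Xi_{\alpha} T$ recorded just above the statement, together with the fact that this expansion does not depend on the choice of orthonormal basis $\lbrace \Xi_\alpha \rbrace$ for $\Lambda^2 V$ (this is built into the implicit definition of $\hat{T}$, and the norm formula $|\hat T|^2 = \sum_\alpha |\Xi_\alpha T|^2$ already exploits it). The operator $\mathfrak{R}$ is understood to act on the $\Lambda^2 V$-factor of $\Lambda^2 V \otimes \mathcal{T}^{(0,k)}(V)$, i.e. as $\mathfrak{R} \otimes \id$; so the first identity $\mathfrak{R}(\hat T) = \mathfrak{R} \circ \hat T = \sum_\alpha \mathfrak{R}(\Xi_\alpha) \otimes \Xi_\alpha T$ follows at once by applying $\mathfrak{R} \otimes \id$ termwise to the expansion of $\hat T$. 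I would add the one-line remark that $(\mathfrak{R} \circ \hat T)(X_1, \ldots, X_k) = \mathfrak{R}\bigl(\hat T(X_1, \ldots, X_k)\bigr)$, so that the two notations $\mathfrak{R}(\hat T)$ and $\mathfrak{R} \circ \hat T$ genuinely agree.

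Next I would compute $g(\mathfrak{R}(\hat T), \hat T)$ using the natural inner product on $\Lambda^2 V \otimes \mathcal{T}^{(0,k)}(V)$, namely the product of the inner product on $\Lambda^2 V = \mathfrak{so}(V)$ with the one on $\mathcal{T}^{(0,k)}(V)$. Substituting the two expansions $\mathfrak{R}(\hat T) = \sum_\alpha \mathfrak{R}(\Xi_\alpha) \otimes \Xi_\alpha T$ and $\hat T = \sum_\beta \Xi_\beta \otimes \Xi_\beta T$ and using bilinearity of the inner product gives $g(\mathfrak{R}(\hat T), \hat T) = \sum_{\alpha,\beta} g(\mathfrak{R}(\Xi_\alpha), \Xi_\beta)\, g(\Xi_\alpha T, \Xi_\beta T)$, which is the middle expression in the claim.

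Finally, specializing to an eigenbasis $\lbrace \Xi_\alpha \rbrace$ with $\mathfrak{R}(\Xi_\alpha) = \lambda_\alpha \Xi_\alpha$, orthonormality of the $\Xi_\alpha$ yields $g(\mathfrak{R}(\Xi_\alpha), \Xi_\beta) = \lambda_\alpha \delta_{\alpha\beta}$, so the double sum collapses to $\sum_\alpha \lambda_\alpha\, g(\Xi_\alpha T, \Xi_\alpha T) = \sum_\alpha \lambda_\alpha |\Xi_\alpha T|^2$, as desired. I do not anticipate any real obstacle; the only point that deserves an explicit word is fixing the conventions — which factor $\mathfrak{R}$ acts on, and that the inner product on $\Lambda^2 V \otimes \mathcal{T}^{(0,k)}(V)$ is the product inner product — after which the three identities are just termwise linearity and orthonormality.
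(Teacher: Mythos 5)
Your proposal is correct and follows essentially the same route as the paper, which states this proposition as an immediate observation from the expansion $\hat{T} = \sum_{\alpha} \Xi_{\alpha} \otimes \Xi_{\alpha} T$ and the convention that $\mathfrak{R}$ acts on the $\Lambda^2 V$-factor. Your explicit remarks about $\mathfrak{R}\circ\hat T$ acting pointwise and about using the product inner product, followed by bilinearity and $g(\mathfrak{R}(\Xi_\alpha),\Xi_\beta)=\lambda_\alpha\delta_{\alpha\beta}$ in an eigenbasis, are exactly the intended verification.
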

The following formulae will be useful for the computation of examples:

\begin{proposition}
If $\lbrace \Xi_{\alpha} \rbrace$ is an orthonormal basis for $\Lambda^2V$ that diagonalizes $R \in \operatorname{Sym}^2(\Lambda^2V)$ and $\lbrace \lambda_{\alpha} \rbrace$ denote the corresponding eigenvalues, then 
\begin{equation*}
| LR |^2 = 2 \sum_{\alpha < \beta} ( \lambda_{\alpha} - \lambda_{\beta} )^2 g( L \Xi_{\alpha}, \Xi_{\beta} )^2
\end{equation*}
for every $L \in \mathfrak{so}(V).$
\label{NormLRinTermsOfEigenvalues}
\end{proposition}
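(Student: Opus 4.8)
The plan is to pass from the $(0,4)$-tensor $R$ to the induced symmetric bilinear form on $\Lambda^2V$ and to compute the components of $LR$ directly in the eigenbasis $\lbrace \Xi_\alpha \rbrace$. First I would record that, under the identification of algebraic curvature tensors with elements of $\operatorname{Sym}_B^2(\Lambda^2V)$, the derivation action of $L \in \mathfrak{so}(V)$ on the $(0,4)$-tensor $R$ agrees with the action
\begin{equation*}
(LR)(\xi,\eta) = - R(L\xi,\eta) - R(\xi,L\eta), \qquad \xi,\eta \in \Lambda^2V,
\end{equation*}
where on the right $L$ denotes the derivation it induces on $\Lambda^2V$; this is consistent because $\Lambda^2V \subset V \otimes V$ is an $O(n)$-subrepresentation, and it is also consistent with the normalization $|\Rm|^2 = 4|R|^2$ recorded above. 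The induced inner product on $\Lambda^2V$ is $O(n)$-invariant, so the operator $L$ on $\Lambda^2V$ is skew-symmetric; hence $A_{\alpha\beta} := g(L\Xi_\alpha,\Xi_\beta)$ satisfies $A_{\alpha\beta} = -A_{\beta\alpha}$.

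Next I would expand $L\Xi_\alpha = \sum_\gamma A_{\alpha\gamma}\Xi_\gamma$ and use $R(\Xi_\gamma,\Xi_\delta) = \lambda_\gamma \delta_{\gamma\delta}$. This gives $R(L\Xi_\alpha,\Xi_\beta) = \lambda_\beta A_{\alpha\beta}$ and $R(\Xi_\alpha,L\Xi_\beta) = \lambda_\alpha A_{\beta\alpha} = -\lambda_\alpha A_{\alpha\beta}$, so that
\begin{equation*}
(LR)(\Xi_\alpha,\Xi_\beta) = (\lambda_\alpha - \lambda_\beta)\, g(L\Xi_\alpha,\Xi_\beta).
\end{equation*}
Since $\lbrace \Xi_\alpha \rbrace$ is an orthonormal basis for $\Lambda^2V$, the tensors $\lbrace \Xi_\alpha \otimes \Xi_\beta \rbrace$ form an orthonormal basis for $\Lambda^2V \otimes \Lambda^2V$, and therefore $|LR|^2 = \sum_{\alpha,\beta} (LR)(\Xi_\alpha,\Xi_\beta)^2 = \sum_{\alpha,\beta} (\lambda_\alpha - \lambda_\beta)^2 g(L\Xi_\alpha,\Xi_\beta)^2$. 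The diagonal terms vanish, and combining the terms $(\alpha,\beta)$ and $(\beta,\alpha)$ (which contribute equally, since $A_{\alpha\beta}^2 = A_{\beta\alpha}^2$ and $(\lambda_\alpha-\lambda_\beta)^2$ is symmetric) produces the factor $2$ and the sum over $\alpha < \beta$, as claimed.

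I do not expect a genuine obstacle here: the only point demanding care is the bookkeeping in the first paragraph, namely checking that the derivation action on $R$ viewed as a $(0,4)$-tensor on $V$ matches the derivation action on the bilinear form it induces on $\Lambda^2V$, and that the corresponding norms are compatible. Once this identification is made explicit, the remainder is a two-line expansion in the eigenbasis, so the proof will be short.
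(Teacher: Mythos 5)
Your argument is correct and is essentially the paper's own proof: both expand $(LR)(\Xi_\alpha,\Xi_\beta) = -R(L\Xi_\alpha,\Xi_\beta) - R(\Xi_\alpha,L\Xi_\beta)$ in the eigenbasis, use skew-symmetry of the induced action of $L$ on $\Lambda^2V$ to get $(\lambda_\alpha-\lambda_\beta)\,g(L\Xi_\alpha,\Xi_\beta)$, and sum over all pairs. Your preliminary paragraph identifying the $(0,4)$-tensor action with the action on $\operatorname{Sym}^2(\Lambda^2V)$ is harmless extra bookkeeping (the proposition is stated directly for $R\in\operatorname{Sym}^2(\Lambda^2V)$), but the substance matches the paper.
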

\begin{proof} This is a straightforward calculation:
\begin{align*}
| LR |^2 & = \sum_{ \alpha, \beta } \left( (LR)( \Xi_{\alpha}, \Xi_{\beta} ) \right)^2 \\
& = \sum_{ \alpha, \beta } \left( -R( L\Xi_{\alpha}, \Xi_{\beta} ) - R( \Xi_{\alpha}, L \Xi_{\beta} ) \right)^2 \\
& = \sum_{\alpha, \beta}  \left( - \lambda_{\beta} g( L \Xi_{\alpha}, \Xi_{\beta} ) - \lambda_{\alpha} g( \Xi_{\alpha}, L \Xi_{\beta} ) \right)^2 \\
& = \sum_{\alpha, \beta} ( \lambda_{\alpha} - \lambda_{\beta} )^2 g( L \Xi_{\alpha}, \Xi_{\beta} )^2.
\end{align*}
\end{proof}

\begin{proposition} If $\lbrace e_i \rbrace$ is an orthonormal basis for $V$ that diagonalizes $h \in \operatorname{Sym}^2(V)$ and $\lbrace h_i \rbrace$ denote the corresponding eigenvalues, then 
\begin{align*}
| L h|^2 = 2 \sum_{i < j} (h_i - h_j)^2 g( L(e_i), e_j )^2 \leq 2 (h_{\max} - h_{\min})^2 |L|^2
\end{align*}
for all $L \in \mathfrak{so}(V)$. It follows that
\begin{align*}
| \hat{h} |^2 = 2n |h|^2 - 2 \tr(h)^2 = 2n | \mathring{h} |^2 .
\end{align*}
\label{NormActionOnSymTensors}
\end{proposition}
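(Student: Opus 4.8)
The first identity is the exact analogue of Proposition~\ref{NormLRinTermsOfEigenvalues} with $V$ in place of $\Lambda^2V$, and I would establish it by the same computation. Working in an eigenbasis $\{e_i\}$ of $h$, the definition of the derivation action on $(0,2)$-tensors gives $(Lh)(e_i,e_j) = -h(Le_i,e_j) - h(e_i,Le_j) = (h_i - h_j)\,g(Le_i,e_j)$, where I use that $h$ is diagonal in this basis and that $L$ is skew. Squaring and summing over all $i,j$, the diagonal terms vanish and pairing $(i,j)$ with $(j,i)$ doubles the off-diagonal contribution, which yields $|Lh|^2 = 2\sum_{i<j}(h_i-h_j)^2\, g(Le_i,e_j)^2$.

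For the stated bound I would estimate $(h_i-h_j)^2 \leq (h_{\max}-h_{\min})^2$ uniformly and pull this constant out of the sum, leaving $\sum_{i<j} g(Le_i,e_j)^2$. Under the identification $\mathfrak{so}(V) = \Lambda^2V$ one has $g(Le_i,e_j)^2 = g(L, e_i\wedge e_j)^2$ for $i<j$, so since $\{e_i\wedge e_j\}_{i<j}$ is an orthonormal basis of $\Lambda^2V$, Parseval gives $\sum_{i<j} g(Le_i,e_j)^2 = |L|^2$, and the inequality follows.

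For the last formula I would use $|\hat h|^2 = \sum_\alpha |\Xi_\alpha h|^2$ for an orthonormal basis $\{\Xi_\alpha\}$ of $\Lambda^2V = \mathfrak{so}(V)$. Inserting the first identity gives $|\hat h|^2 = 2\sum_{i<j}(h_i-h_j)^2 \sum_\alpha g(\Xi_\alpha e_i, e_j)^2$, and the same observation as above, namely $g(\Xi_\alpha e_i, e_j)^2 = g(\Xi_\alpha, e_i\wedge e_j)^2$ together with Parseval, shows $\sum_\alpha g(\Xi_\alpha e_i, e_j)^2 = |e_i\wedge e_j|^2 = 1$ for $i<j$. Hence $|\hat h|^2 = 2\sum_{i<j}(h_i-h_j)^2 = \sum_{i\neq j}(h_i-h_j)^2$; expanding the square turns this into $\sum_{i,j}(h_i^2 - 2h_ih_j + h_j^2) = 2n\sum_i h_i^2 - 2\big(\sum_i h_i\big)^2 = 2n|h|^2 - 2\tr(h)^2$, and a one-line expansion of $\mathring{h} = h - \tfrac{\tr(h)}{n}\,g$ (giving $|\mathring{h}|^2 = |h|^2 - \tfrac{\tr(h)^2}{n}$) identifies this with $2n|\mathring{h}|^2$. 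Everything here is routine; the only point that demands any care is keeping the identification $\mathfrak{so}(V) \cong \Lambda^2V$ and its induced inner product normalized consistently, since it is exactly that normalization which makes both $\sum_{i<j} g(Le_i,e_j)^2 = |L|^2$ and $\sum_\alpha g(\Xi_\alpha e_i, e_j)^2 = 1$ come out with the right constants.
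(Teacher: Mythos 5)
Your proposal is correct and follows essentially the same route as the paper: the identity $(Lh)(e_i,e_j)=(h_i-h_j)\,g(Le_i,e_j)$ as in Proposition~\ref{NormLRinTermsOfEigenvalues}, then summation of $|\Xi_\alpha h|^2$ over an orthonormal basis of $\Lambda^2V$ reducing to $\sum_{i\neq j}(h_i-h_j)^2=2n|h|^2-2\tr(h)^2$. The paper evaluates this sum by plugging in the concrete basis $\{e_k\wedge e_l\}$ and Kronecker deltas, whereas you interchange the sums and invoke Parseval via $g(\Xi_\alpha e_i,e_j)=g(\Xi_\alpha, e_i\wedge e_j)$ — a purely presentational difference, with the normalization handled correctly in both.
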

\begin{proof} $| L h|^2$ is calculated as in proposition \ref{NormLRinTermsOfEigenvalues} and 
\begin{align*}
| \hat{h} |^2 
& = \sum_{k<l} | (e_k \wedge e_l) h |^2 \\
& = \sum_{k<l} \sum_{i,j} (h_i - h_j)^2 g( (e_k \wedge e_l) e_i, e_j)^2 \\
& = \sum_{k<l} \sum_{i,j} (h_i - h_j)^2 g(\delta_{ki} e_l - \delta_{li} e_k, e_j)^2 \\
& = \sum_{k<l} \sum_{i,j} (h_i - h_j)^2 ( \delta_{ki}\delta_{lj} - \delta_{li}\delta_{kj})^2 \\
& = \sum_{k < l} (h_k - h_l)^2 +  \sum_{k < l} (h_l - h_k)^2 \\
& = \sum_{k,l} (h_k - h_l)^2 \\
& = 2n |h|^2 - 2 \tr(h)^2
\end{align*}
as claimed.
\end{proof}

\subsection{The Bochner Technique}
\label{BochnerTechniqueSection}
Let $(M,g)$ be a closed $n$-dimensional Riemannian manifold and let $R(X,Y)Z = \nabla_Y \nabla_X Z - \nabla_X \nabla_Y Z + \nabla_{[X,Y]} Z$ denote its curvature tensor. For $T \in \mathcal{T}^{(0,k)}(M)$ set 
\begin{equation*}
\Ric(T)(X_1, \ldots, X_k) = \sum_{i=1}^k \sum_{j=1}^n  (R(X_i,e_j)T) (X_1, \ldots, e_j, \ldots, X_k).
\end{equation*}

\begin{remarkroman}
Recall that the Ricci identity asserts
\begin{align*}
R(X,Y) T(X_1, \ldots, X_k) = - \sum_{i=1}^k T(X_1, \ldots, R(X,Y) X_i, \ldots, X_k),
\end{align*}
which is in agreement with the effect of $R(X,Y) \in \mathfrak{so}(TM)$ on $T \in \mathcal{T}^{(0,k)}(M)$ defined in section \ref{SubsectionTheRegularRepresentation}. In particular the above definition of $\Ric(T)$ carries over to algebraic curvature tensors. The notation $\Ric_R(T)$ will be used to specify the algebraic curvature tensor $R.$ 
\end{remarkroman}

Let $E \to M$ be a subbundle of $\mathcal{T}^{(0,k)}(M)$. For $c>0$ the {\em Lichnerowicz Laplacian} on $E$ is given by 
\begin{equation*}
\Delta_L = \nabla^{*} \nabla + c \Ric.
\end{equation*}
A tensor $T$ is called {\em harmonic} if $\Delta_L  T =0.$

\begin{exampleroman}
There are various important examples of Lichnerowicz Laplacians for different $c>0.$
\begin{enumerate}
\item The Hodge Laplacian is a Lichnerowicz Laplacian for $c=1$ and a $p$-form $\omega$ is harmonic if and only if it is closed and divergence free.
\item The natural definition of the Lichnerowicz Laplacian for symmetric $(0,2)$-tensors uses $c=\frac{1}{2}$. With this choice $h \in \operatorname{Sym}^2(M)$ is harmonic if and only if $h$ is a Codazzi tensor and divergence free. This is equivalent to $h$ being Codazzi and having constant trace. This has been used by Berger \cite{BergerCompactEinstein}, \cite{BergerKaehlerEinstein} in the case of Einstein metrics and by Simons \cite{SimonsMinimalVarieties} in the case of constant mean curvature hypersurfaces.
\item The Lichnerowicz Laplacian for algebraic curvature tensors $\Rm$ on a Riemannian manifold also uses $c=\frac{1}{2}.$ With this choice $\Rm$ is harmonic if it satisfies the second Bianchi identity and it is divergence free. If $\Rm$ satisfies the second Bianchi identity, then it is divergence free if and only if its Ricci tensor is a Codazzi tensor, and in this case its scalar curvature is constant. This was used by Tachibana \cite{TachibanaPosCurvOperator}. 
\end{enumerate}
\label{ExamplesLichnerowiczLaplacians}
\end{exampleroman}

The next proposition is established in \cite[lemmas 9.3.3 and 9.4.3]{PetersenRiemGeom}.
\begin{proposition}
If $S, T \in \mathcal{T}^{(0,k)}(M)$, then
\begin{equation*}
g( \Ric(S), T) = g( \mathfrak{R}( \hat{S}), \hat{T} ). 
\end{equation*}
In particular, $\Ric$ is self-adjoint.
\end{proposition}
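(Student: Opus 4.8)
The plan is to unwind both sides in an orthonormal frame $\{e_i\}$ for $TM$ and reduce everything to the algebraic identity $g(L,\hat T(X_1,\dots,X_k)) = (LT)(X_1,\dots,X_k)$ that defines $\hat T$, together with the fact (from the remark on the Ricci identity) that $R(X,Y)$ acts on tensors exactly as the element $\mathfrak R(X\wedge Y)\in\mathfrak{so}(TM)$ of the regular representation. First I would fix a point and pick an orthonormal eigenbasis $\{\Xi_\alpha\}$ of $\mathfrak R$ on $\Lambda^2 TM$ at that point, so that $\mathfrak R(\Xi_\alpha)=\lambda_\alpha\Xi_\alpha$. Then, by Proposition \ref{ComputationBochnerTermWithEigenvalues}, the right-hand side becomes $g(\mathfrak R(\hat S),\hat T)=\sum_\alpha\lambda_\alpha\, g(\Xi_\alpha S,\Xi_\alpha T)$, using that $\hat S=\sum_\alpha \Xi_\alpha\otimes \Xi_\alpha S$ and similarly for $\hat T$.

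For the left-hand side, I would expand $\Ric(S)$ from its definition: $\Ric(S)(X_1,\dots,X_k)=\sum_{i=1}^k\sum_{j=1}^n (R(X_i,e_j)S)(X_1,\dots,e_j,\dots,X_k)$. The key move is to rewrite $R(X_i,e_j)$, viewed as an element of $\mathfrak{so}(TM)$, in terms of the eigenbasis: $R(X_i,e_j)$ corresponds to $\mathfrak R(X_i\wedge e_j)=\sum_\alpha \lambda_\alpha\, g(X_i\wedge e_j,\Xi_\alpha)\,\Xi_\alpha$. Substituting this and summing the contraction over $j$ (and over the slot index $i$), the sum $\sum_j g(X_i\wedge e_j,\Xi_\alpha)(\,\cdots e_j\cdots)$ should reorganize into $(\Xi_\alpha S)$ evaluated appropriately, so that $g(\Ric(S),T)$ collapses to $\sum_\alpha\lambda_\alpha\, g(\Xi_\alpha S,\Xi_\alpha T)$ as well. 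Concretely, I expect the cleanest route is to first prove the frame-independent identity $g(\Ric(S),T)=\sum_{\alpha}\lambda_\alpha\, g(\Xi_\alpha S,\Xi_\alpha T)$ by carefully tracking how the double sum over $i$ and $j$ in the definition of $\Ric$ reproduces the action of each $\Xi_\alpha$ on all $k$ slots — this is essentially Poor's observation that the curvature term in a Lichnerowicz Laplacian is the second derivative of the regular representation evaluated against $\mathfrak R$. Self-adjointness of $\Ric$ is then immediate from the symmetry of the resulting bilinear expression $\sum_\alpha\lambda_\alpha\, g(\Xi_\alpha S,\Xi_\alpha T)$ in $S$ and $T$.

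The main obstacle is purely bookkeeping: correctly matching the index contractions in $\Ric(S)$ — which run over a distinguished slot $i$ and an auxiliary summation index $j$ — with the single sum over $\alpha$ produced by diagonalizing $\mathfrak R$, and checking that the antisymmetrization built into $X_i\wedge e_j$ is exactly what is needed for $R(X_i,e_j)$ to act as the derivation $\Xi_\alpha$ on the tensor. I would handle this by testing $S$ on a basis tensor $X_1=e_{i_1},\dots,X_k=e_{i_k}$, writing $\delta$-functions for all the inner products, and verifying the identity componentwise; alternatively one can cite the computation in \cite[lemmas 9.3.3 and 9.4.3]{PetersenRiemGeom} directly, since the statement is quoted from there. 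Given that the excerpt explicitly attributes this to Petersen's book, I would keep the proof short, present the eigenbasis computation as the conceptual content, and refer to the reference for the remaining index-chasing.
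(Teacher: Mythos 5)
The paper does not actually write out a proof of this proposition --- it simply cites \cite[lemmas 9.3.3 and 9.4.3]{PetersenRiemGeom} --- and your outline is essentially the argument from that reference: expand $R(X_i,e_j)=\sum_\alpha \lambda_\alpha\, g(X_i\wedge e_j,\Xi_\alpha)\,\Xi_\alpha$, sum over $j$ so that each $\Xi_\alpha$ acts in the $i^{\text{th}}$ slot, sum over $i$ to recognize the derivation action, and conclude $g(\Ric(S),T)=\sum_\alpha\lambda_\alpha\, g(\Xi_\alpha S,\Xi_\alpha T)=g(\mathfrak{R}(\hat S),\hat T)$. So your proposal is correct and follows essentially the same (cited) route; the only step you leave implicit is the skew-adjointness $g(LU,V)=-g(U,LV)$ of the $\mathfrak{so}(TM)$-action on tensors, which is exactly what collapses the remaining bookkeeping.
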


The following theorem summarizes the framework of the Bochner technique for general Lichnerowicz Laplacians. In this form it is due to the work of P. Li \cite{LiSobolevConstant} and Gallot \cite{GallotSobolevEstimates}.

\begin{theorem}
Let $n \geq 3,$ $\kappa \leq 0$ and $D>0$, and let $(M,g)$ be a closed connected $n$-dimensional Riemannian manifold with $\Ric(M) \geq (n-1) \kappa$ and $\diam(M) \leq D.$ 

Let $E \to M$ be a subbundle of $\mathcal{T}^{(0,k)}(M)$ with $m$-dimensional fiber and assume there is $C>0$ such that 
\begin{equation*}
g( \mathfrak{R}( \hat{T}), \hat{T}  ) \geq \kappa C | T |^2
\end{equation*}  for all $T \in \Gamma(E)$. 

In this case the dimension of the kernel of the associated Lichnerowicz Laplacian 
\begin{equation*}
\ker( \Delta_L ) = \left\lbrace  T \in \Gamma(E) \ \vert \ \Delta_L T = \nabla^{*} \nabla T + c \Ric(T) =  0 \right\rbrace 
\end{equation*}
is bounded by
\begin{equation*}
m \cdot \exp \left( C \left( n, \kappa D^2 \right) \cdot \sqrt{-\kappa D^2 c C} \right)
\end{equation*}
and when $\kappa =0$, then all $T \in \ker( \Delta_L )$ are parallel. 

Moreover, there is $\varepsilon(n,cC) >0$ such that $\kappa D^2 \geq - \varepsilon(n,cC)$ implies $\dim \ker( \Delta_L ) \leq m$. Finally, if $g( \mathfrak{R}( \hat{T}), \hat{T} ) >0$ for all $T \in \Gamma(E)$ with $\hat{T} \neq 0$, then 
\begin{equation*}
\ker( \Delta_L ) = \lbrace T \in \Gamma(E) \ \vert \ T \ \text{parallel}, \ \hat{T} = 0 \rbrace.
\end{equation*}
\label{BochnerTechniqueMachinery}
\end{theorem}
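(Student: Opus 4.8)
The plan is to reduce the statement to a scalar differential inequality for $u=|T|$ and then feed it into the potential‑theoretic machinery of P.~Li and Gallot. Let $T\in\ker(\Delta_L)$, so that $\nabla^{*}\nabla T=-c\,\Ric(T)$. Using the elementary identity $\tfrac12\Delta|T|^{2}=|\nabla T|^{2}-g(\nabla^{*}\nabla T,T)$ together with the proposition $g(\Ric(T),T)=g(\mathfrak{R}(\hat T),\hat T)$ and the curvature hypothesis, one obtains
\begin{equation*}
\tfrac12\Delta|T|^{2}=|\nabla T|^{2}+c\,g(\mathfrak{R}(\hat T),\hat T)\ \geq\ |\nabla T|^{2}+c\,\kappa C\,|T|^{2}.
\end{equation*}
With $u=|T|$, the Kato inequality $|\nabla u|\leq|\nabla T|$ (valid where $u>0$, and enough to run everything distributionally across the zero set of $u$) and $\tfrac12\Delta u^{2}=u\,\Delta u+|\nabla u|^{2}$ then give
\begin{equation*}
\Delta u\ \geq\ -K\,u,\qquad K:=-c\,\kappa C\ =\ cC(-\kappa)\ \geq 0,
\end{equation*}
weakly on $M$; that is, $u$ is a nonnegative weak subsolution of $\Delta+K$.

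Second, I would invoke the quantitative Moser iteration of P.~Li and Gallot. The hypotheses $\Ric(M)\geq(n-1)\kappa$ and $\diam(M)\leq D$ yield, by Gallot's estimates, a Sobolev (isoperimetric) inequality on $(M,g)$ with constant depending only on $n$ and $\kappa D^{2}$; iterating it for $\Delta u\geq -Ku$ produces a mean‑value bound
\begin{equation*}
\sup_{M}u^{2}\ \leq\ \mathbf{C}\bigl(n,\kappa D^{2},KD^{2}\bigr)\cdot\frac{1}{\Vol(M)}\int_{M}u^{2},
\end{equation*}
where one may take $\mathbf{C}=\exp\!\bigl(C(n,\kappa D^{2})\sqrt{KD^{2}}\,\bigr)$ and, crucially, $\mathbf{C}\to 1$ as $KD^{2}\to 0$ for fixed $n$. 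Since $KD^{2}=-\kappa D^{2}\,cC$, the exponent is $C(n,\kappa D^{2})\sqrt{-\kappa D^{2}\,cC}$. Extracting the mean‑value inequality with an explicit constant of exactly this shape, and with the correct behavior as $KD^{2}\to 0$, is the technical heart of the argument and the main obstacle; the remaining steps are essentially formal.

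Third, the dimension bound follows from the standard evaluation/trace argument. Let $N=\dim\ker(\Delta_L)$ and fix an $L^{2}$‑orthonormal basis $T_{1},\dots,T_{N}$ of $\ker(\Delta_L)$. For $x\in M$ the evaluation map $\ker(\Delta_L)\to E_{x}$, $T\mapsto T(x)$, has rank at most $m=\dim E_{x}$ and, by the previous step, Hilbert--Schmidt norm squared at most $m\cdot\sup\{\sup_{M}|T|^{2}:\|T\|_{L^{2}}=1\}\leq m\,\mathbf{C}/\Vol(M)$; hence $\sum_{i}|T_{i}(x)|^{2}\leq m\,\mathbf{C}/\Vol(M)$. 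Integrating over $M$ gives $N=\sum_{i}\|T_{i}\|_{L^{2}}^{2}\leq m\,\mathbf{C}$, the asserted estimate. Since $N$ is an integer and $\mathbf{C}\to 1$ as $\kappa D^{2}\to 0^{-}$, there is $\varepsilon(n,cC)>0$ for which $\kappa D^{2}\geq-\varepsilon(n,cC)$ forces $m\,\mathbf{C}<m+1$, hence $N\leq m$.

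Finally, the rigidity statements come from integrating the Bochner identity over the closed manifold. When $\kappa=0$ --- and, more generally, whenever $g(\mathfrak{R}(\hat T),\hat T)\geq 0$ for all $T$ --- integration gives $0=\int_{M}|\nabla T|^{2}+c\int_{M}g(\mathfrak{R}(\hat T),\hat T)$ with both integrands nonnegative, so $\nabla T=0$ and $g(\mathfrak{R}(\hat T),\hat T)\equiv 0$; in particular every harmonic $T$ is parallel. If in addition $g(\mathfrak{R}(\hat T),\hat T)>0$ whenever $\hat T\neq 0$, then $g(\mathfrak{R}(\hat T),\hat T)\equiv 0$ forces $\hat T\equiv 0$; conversely, if $T$ is parallel with $\hat T\equiv 0$ then $\nabla^{*}\nabla T=0$ and, since $g(\Ric(T),S)=g(\mathfrak{R}(\hat T),\hat S)=0$ for all $S$ while $\Ric$ preserves $\Gamma(E)$, also $\Ric(T)=0$, whence $\Delta_L T=0$. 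This gives $\ker(\Delta_L)=\{T\in\Gamma(E):T\text{ parallel},\ \hat T=0\}$.
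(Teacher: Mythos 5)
Your proposal is correct and follows essentially the same route the paper takes: the paper does not prove Theorem \ref{BochnerTechniqueMachinery} itself but attributes it to P.~Li and Gallot, and your outline (Bochner formula plus Kato to get $\Delta u \geq -Ku$ for $u=|T|$, Gallot's Sobolev estimate and Moser iteration for the mean-value inequality with constant $\exp\left(C(n,\kappa D^2)\sqrt{-\kappa D^2 cC}\right)$, Li's rank/trace argument for the dimension bound, and integration of the Bochner identity for the rigidity statements) is exactly that machinery. The one step you defer, the quantitative mean-value inequality, is precisely the content of the cited works of Li and Gallot, so treating it as a black box is consistent with the paper's own treatment.
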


\begin{remarkroman}
The condition $\Ric(M) \geq (n-1) \kappa$ is always satisfied in the situation of Theorem \ref{BettiNumberEstimate} and Theorem \ref{GeneralizationTachibana} since the Ricci curvature is bounded from below by the sum of the lowest $(n-1)$ eigenvalues of the curvature operator.
\end{remarkroman}

\section{Controlling the curvature term of Lichnerowicz Laplacians}
\label{TechnicalSection}

The following lemma  provides a general method of controlling the curvature term of the Lichnerowicz Laplacian on tensors.

\begin{lemma}
Let $\mathfrak{R} \colon \Lambda^2V \to \Lambda^2 V$ be an algebraic curvature operator with eigenvalues $\lambda_1 \leq \ldots \leq \lambda_{\binom{n}{2}}$ and let $T \in \mathcal{T}^{(0,k)}(V)$.

Suppose there is $C \geq 1$ such that 
\begin{equation*}
| L T |^2 \leq \frac{1}{C}| \hat{T}|^2 | L |^2 
\end{equation*}
for all $L \in  \mathfrak{so}(V).$

Let $\kappa \leq 0.$ If $\frac{1}{\floor{C}} \left( \lambda_1 + \ldots + \lambda_{\floor{C}} \right) \geq \kappa$, then $g( \mathfrak{R}( \hat{T} ), \hat{T} ) \geq \kappa |\hat{T}|^2$ and if $\lambda_1 + \ldots + \lambda_{\floor{C}} > 0$, then $g( \mathfrak{R}( \hat{T} ), \hat{T} ) > 0$ unless $\hat{T}=0.$
 \label{GeneralBochnerTermEstimate}
\end{lemma}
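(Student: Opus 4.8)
The plan is to diagonalize $\mathfrak{R}$ and reduce the estimate to an elementary inequality about weighted averages of eigenvalues. Fix an orthonormal eigenbasis $\lbrace \Xi_\alpha \rbrace$ of $\Lambda^2 V$ with $\mathfrak{R}(\Xi_\alpha) = \lambda_\alpha \Xi_\alpha$ and $\lambda_1 \leq \ldots \leq \lambda_{\binom{n}{2}}$. By Proposition \ref{ComputationBochnerTermWithEigenvalues} one has $g(\mathfrak{R}(\hat{T}),\hat{T}) = \sum_\alpha \lambda_\alpha |\Xi_\alpha T|^2$ and $|\hat{T}|^2 = \sum_\alpha |\Xi_\alpha T|^2$. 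Applying the hypothesis to $L = \Xi_\alpha$, for which $|L|^2 = 1$, gives the pointwise bound $|\Xi_\alpha T|^2 \leq \tfrac{1}{C}|\hat{T}|^2$ for every $\alpha$; summing over $\alpha$ also forces $C \leq \binom{n}{2}$ when $\hat{T} \neq 0$. The case $\hat{T}=0$ is trivial, so assume $\hat{T}\neq 0$ and set $S = |\hat{T}|^2 > 0$ and $a_\alpha = |\Xi_\alpha T|^2 \in [0,S/C]$, so that $\sum_\alpha a_\alpha = S$.

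It then suffices to show that $\sum_\alpha \lambda_\alpha a_\alpha \geq \kappa S$ (resp.\ $>0$) under these constraints together with $\tfrac{1}{\floor{C}}(\lambda_1 + \ldots + \lambda_{\floor{C}}) \geq \kappa$ (resp.\ $\lambda_1 + \ldots + \lambda_{\floor{C}} > 0$). Heuristically the left side is smallest when the total mass $S$ is concentrated on the smallest eigenvalues subject to the cap $a_\alpha \le S/C$, i.e.\ on the configuration $a_1 = \ldots = a_{\floor{C}} = S/C$ and $a_{\floor{C}+1} = S(C-\floor{C})/C$. To make this precise, write $m = \floor{C}$, assume first $m < \binom{n}{2}$ so that $\lambda_{m+1}$ exists, and split $\sum_\alpha \lambda_\alpha a_\alpha = \sum_\alpha (\lambda_\alpha - \lambda_{m+1})a_\alpha + \lambda_{m+1}S$. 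For $\alpha \leq m$ the factor $\lambda_\alpha - \lambda_{m+1}$ is $\leq 0$ and $0 \le a_\alpha \leq S/C$, so that summand is $\geq (\lambda_\alpha - \lambda_{m+1})S/C$; for $\alpha \geq m+1$ it is $\geq 0$. This yields
\[
\sum_\alpha \lambda_\alpha a_\alpha \;\geq\; \frac{S}{C}\left( \lambda_1 + \ldots + \lambda_m \right) + \frac{S(C-m)}{C}\,\lambda_{m+1}.
\]
Since $\lambda_{m+1} \geq \tfrac{1}{m}(\lambda_1 + \ldots + \lambda_m) \geq \kappa$, $\;C - m \geq 0$, and $\lambda_1 + \ldots + \lambda_m \geq m\kappa$, the right-hand side is $\geq \tfrac{S}{C}\bigl(m\kappa + (C-m)\kappa\bigr) = S\kappa$. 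For the strict statement, $\lambda_1 + \ldots + \lambda_m > 0$ forces $\lambda_{m+1} > 0$ as well, so both terms above are nonnegative and the first is strictly positive.

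The remaining boundary case $C = \binom{n}{2}$ is handled directly: then $m = \binom{n}{2}$, and the cap $a_\alpha \leq S/C$ together with $\sum_\alpha a_\alpha = S$ forces $a_\alpha = S/C$ for all $\alpha$, whence $\sum_\alpha \lambda_\alpha a_\alpha = \tfrac{S}{C}(\lambda_1 + \ldots + \lambda_{\binom{n}{2}})$ and the conclusion is immediate from the hypothesis. I do not expect a genuine obstacle here: the only point demanding care is the interplay between the eigenvalue ordering and the cap $a_\alpha \le S/C$ — i.e.\ justifying the extremal configuration — and this reduces to the term-by-term estimate above. The actual content of the lemma is the translation, via Proposition \ref{ComputationBochnerTermWithEigenvalues}, of the hypothesis $|LT|^2 \le \tfrac1C|\hat{T}|^2|L|^2$ into the clean per-eigenvector inequality $|\Xi_\alpha T|^2 \le \tfrac1C|\hat{T}|^2$.
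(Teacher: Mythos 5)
Your proof is correct and follows essentially the same route as the paper: diagonalize $\mathfrak{R}$, apply the hypothesis with $L = \Xi_\alpha$ to get $|\Xi_\alpha T|^2 \le \tfrac{1}{C}|\hat{T}|^2$, split the sum at $\lfloor C\rfloor$ against $\lambda_{\lfloor C\rfloor+1}$, and land on the same intermediate bound $\tfrac{|\hat T|^2}{C}(\lambda_1+\ldots+\lambda_{\lfloor C\rfloor}) + \bigl(1-\tfrac{\lfloor C\rfloor}{C}\bigr)\lambda_{\lfloor C\rfloor+1}|\hat T|^2$. Your extra observations (that $\hat T\neq 0$ forces $C\le \binom{n}{2}$, and the separate treatment of the boundary case $\lfloor C\rfloor = \binom{n}{2}$) are fine but only tidy up an edge case the paper leaves implicit.
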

\begin{proof}
Choose an orthonormal basis $\left\lbrace \Xi_{\alpha} \right\rbrace$ for $\Lambda^2V$ such that $\mathfrak{R}( \Xi_{\alpha} ) = \lambda_{\alpha} \Xi_{\alpha}.$ Notice that $\lambda_{\floor{C}+1} \geq \kappa$, which in turn implies
\begin{align*}
g( \mathfrak{R}(\hat{T}),\hat{T}) 
& = \sum_{\alpha=1}^{\binom{n}{2}} \lambda_{\alpha}  |\Xi_{\alpha} T|^2 \\
& = \sum_{\alpha = \floor{C}+1}^{\binom{n}{2}} \lambda_{\alpha} |\Xi_{\alpha} T|^2  + \sum_{\alpha = 1}^{\floor{C}} \lambda_{\alpha} |\Xi_{\alpha} T|^2  \\
& \geq \lambda_{\floor{C}+1} \sum_{\alpha = \floor{C}+1}^{\binom{n}{2}} |\Xi_{\alpha} T|^2  + \sum_{\alpha = 1}^{\floor{C}} \lambda_{\alpha} |\Xi_{\alpha} T|^2  \\
& = \lambda_{\floor{C}+1} |\hat{T}|^2 + \sum_{\alpha = 1}^{\floor{C}} \left(  \lambda_{\alpha} - \lambda_{\floor{C}+1} \right) |\Xi_{\alpha} T|^2  \\
& \geq \lambda_{\floor{C}+1} | \hat{T}|^2 + \frac{1}{C} \sum_{\alpha = 1}^{\floor{C}} \left(  \lambda_{\alpha} - \lambda_{\floor{C}+1} \right) | \hat{T}|^2 \\
& =  \lambda_{\floor{C}+1} \left(1-\frac{\floor{C}}{C} \right) | \hat{T}|^2 + \frac{| \hat{ T}|^2}{C} \sum_{\alpha =1}^{\floor{C}} \lambda_{\alpha} \\
& \geq  \kappa | \hat{ T}|^2.
\end{align*}

The last claim follows from the observation that for $\lambda_{ \floor{C} + 1} \geq 0$ the above calculation implies $g( \mathfrak{R}(\hat{T}),\hat{T}) \geq  \frac{| \hat{ T}|^2}{C}  \sum_{\alpha =1}^{\floor{C}} \lambda_{\alpha}.$
\end{proof}

In the following, $|LT|^2$ will be estimated or computed for various types of tensors:

\begin{lemma}
Let $(V,g)$ be an $n$-dimensional Euclidean vector space and $L \in \mathfrak{so}(V).$ The following hold:
\begin{enumerate}
\item Every $T \in \mathcal{T}^{(0,k)}(V)$ satisfies
\begin{align*}
| L T |^2 \leq k^2 |T|^2 | L |^2.
\end{align*}
\item Every $h \in \operatorname{Sym}^2(V)$ satisfies
\begin{align*}
| L h |^2 \leq 4 | \mathring{h}|^2 | L |^2.
\end{align*}
\item Every $p$-form $\omega$ satisfies 
\begin{align*}
| L \omega |^2 \leq \min\lbrace p,n-p\rbrace | \omega|^2 |L|^2.
\end{align*}
\item Every $R \in \operatorname{Sym}^2( \Lambda^2V)$ satisfies
\begin{align*}
| LR |^2 \leq 8 | \mathring{R} |^2 |L|^2
\end{align*}
and the associated $(0,4)$-tensor $\Rm$ also satisfies 
\begin{align*}
| L \Rm |^2 \leq 8 | \mathring{\Rm}|^2 |L|^2.
\end{align*}
\end{enumerate}
\label{EstimatesForTensors}
\end{lemma}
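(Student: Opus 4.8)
The plan is to establish the four bounds in turn, each time reducing $|LT|^{2}$ to a multiple of $|L|^{2}=|L|_{\Lambda^{2}V}^{2}$ by using the structure of the relevant $\mathfrak{so}(V)$-representation. The one elementary fact I will use repeatedly is that a skew-symmetric endomorphism $L$ has singular values occurring in equal pairs $\mu_{1},\mu_{1},\mu_{2},\mu_{2},\dots$, so that its operator norm satisfies $\|L\|^{2}=\max_{j}\mu_{j}^{2}\le\sum_{j}\mu_{j}^{2}=\tfrac12|L|_{F}^{2}=|L|^{2}$.

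For (a) I would write the derivation action slot by slot: setting $T_{i}(X_{1},\dots,X_{k})=T(X_{1},\dots,LX_{i},\dots,X_{k})$ one has $LT=-\sum_{i=1}^{k}T_{i}$, hence $|LT|\le\sum_{i=1}^{k}|T_{i}|$ by the triangle inequality. Freezing all arguments except the $i$-th exhibits the corresponding slice of $T_{i}$ as the image of the slice of $T$ under the transpose of $L$, so $|T_{i}|\le\|L\|\,|T|\le|L|\,|T|$ and $|LT|\le k|L|\,|T|$. For (b), $Lg=0$ gives $Lh=L\mathring{h}$, and Proposition \ref{NormActionOnSymTensors} yields $|L\mathring{h}|^{2}\le 2(\mathring h_{\max}-\mathring h_{\min})^{2}|L|^{2}$; since $\mathring h$ is trace-free, $\mathring h_{\max}\ge 0\ge\mathring h_{\min}$, so $(\mathring h_{\max}-\mathring h_{\min})^{2}\le 2\mathring h_{\max}^{2}+2\mathring h_{\min}^{2}\le 2|\mathring h|^{2}$, which produces the constant $4$.

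Part (c) is the step where the crude bound $p^{2}$ coming from (a) is insufficient, and I expect the combinatorial bookkeeping here to be the main point. I would diagonalize $L$ in blocks: choose an orthonormal basis giving an orthogonal splitting $V=P_{1}\oplus\dots\oplus P_{r}\oplus\ker L$ in which $L$ acts on $P_{s}$ as a rotation generator of speed $a_{s}$, so $|L|^{2}=\sum_{s}a_{s}^{2}$. This induces an \emph{orthogonal} decomposition
\[
\Lambda^{p}V=\bigoplus\ \Lambda^{\epsilon_{1}}P_{1}\otimes\dots\otimes\Lambda^{\epsilon_{r}}P_{r}\otimes\Lambda^{\bullet}(\ker L),
\]
which is preserved by $L$, and on which $L$ acts (up to sign) as the tensor-product derivation of its actions on the factors; this action is trivial on every $\Lambda^{0}$, $\Lambda^{2}$ and $\Lambda^{\bullet}(\ker L)$ factor, so on a given summand it is a sum of commuting rank-two skew operators, one of operator norm $|a_{s}|$ for each $s$ with $\epsilon_{s}=1$. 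Hence $L$ restricted to such a summand has operator norm at most $\sum_{s:\epsilon_{s}=1}|a_{s}|\le\big(\#\{s:\epsilon_{s}=1\}\big)^{1/2}|L|$ by Cauchy--Schwarz, and since $\sum_{s}\epsilon_{s}\le p$ and $\sum_{s}(2-\epsilon_{s})\le n-p$ one gets $\#\{s:\epsilon_{s}=1\}\le\min\{p,n-p\}$. Taking the maximum over the summands gives $\|L\|_{\Lambda^{p}V}^{2}\le\min\{p,n-p\}\,|L|^{2}$, which is the assertion. (Alternatively one proves the $p$-bound this way and deduces the $(n-p)$-bound from the fact that $L$ kills the volume form, hence commutes with the Hodge star.)

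For (d) I would invoke Proposition \ref{NormLRinTermsOfEigenvalues} with an $R$-eigenbasis $\{\Xi_{\alpha}\}$ and write $\mathring\lambda_{\alpha}$ for the eigenvalues of the trace-free part $\mathring R$, so that $\lambda_{\alpha}-\lambda_{\beta}=\mathring\lambda_{\alpha}-\mathring\lambda_{\beta}$ and $\sum_{\alpha}\mathring\lambda_{\alpha}^{2}=|\mathring R|^{2}$. Using $2\sum_{\alpha<\beta}=\sum_{\alpha\neq\beta}$, the inequality $(\mathring\lambda_{\alpha}-\mathring\lambda_{\beta})^{2}\le 2\mathring\lambda_{\alpha}^{2}+2\mathring\lambda_{\beta}^{2}$, the symmetry of $g(L\Xi_{\alpha},\Xi_{\beta})^{2}$, the identity $\sum_{\beta}g(L\Xi_{\alpha},\Xi_{\beta})^{2}=|L\Xi_{\alpha}|^{2}$, and the bound $|L\Xi_{\alpha}|^{2}\le 2|L|^{2}$ from part (c) with $p=2$, one obtains
\[
|LR|^{2}=\sum_{\alpha\neq\beta}(\mathring\lambda_{\alpha}-\mathring\lambda_{\beta})^{2}\,g(L\Xi_{\alpha},\Xi_{\beta})^{2}\le 4\sum_{\alpha}\mathring\lambda_{\alpha}^{2}\,|L\Xi_{\alpha}|^{2}\le 8|L|^{2}\sum_{\alpha}\mathring\lambda_{\alpha}^{2}=8|\mathring R|^{2}|L|^{2}.
\]
The $(0,4)$-statement then follows from the same computation, via the compatibility of the two $\mathfrak{so}(V)$-actions under the identification $R\mapsto\Rm$, together with $|L\Rm|^{2}=4|LR|^{2}$ and $|\mathring{\Rm}|^{2}=4|\mathring R|^{2}$.
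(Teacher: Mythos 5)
Your proposal is correct, and in the two substantive cases it takes a genuinely different route from the paper. The paper's proof is one uniform component computation: it normalizes $L$ into $2\times 2$ blocks, expands $(LT)(e_{i_1},\ldots,e_{i_k})$ and applies Cauchy--Schwarz, which gives $k^2$ in (a); for (c) it observes that for strictly increasing indices the block coefficients surviving the antisymmetry are pairwise distinct, so the Cauchy--Schwarz factor improves from $p\,|L|^2$ to $|L|^2$, with Hodge duality supplying $\min\lbrace p,n-p\rbrace$; (b) is (a) with $k=2$ applied to $\mathring{h}$, and (d) is done ``as in (c)'' using the symmetries of $\Rm$. You instead estimate the operator norm of the induced action: in (c) you split $\Lambda^pV$ orthogonally into $L$-invariant summands $\Lambda^{\epsilon_1}P_1\otimes\cdots\otimes\Lambda^{\epsilon_r}P_r\otimes\Lambda^{q}(\ker L)$, note that $L$ acts only through the factors with $\epsilon_s=1$, and count that the number of such $s$ is at most $\min\lbrace p,n-p\rbrace$; in (d) you feed the eigenvalue formula of proposition \ref{NormLRinTermsOfEigenvalues} into $(\mathring\lambda_\alpha-\mathring\lambda_\beta)^2\le 2\mathring\lambda_\alpha^2+2\mathring\lambda_\beta^2$ together with the $2$-form bound $|L\Xi_\alpha|^2\le 2|L|^2$, and transfer to the $(0,4)$-tensor via the equivariant identification and the factor $4$ between the norms. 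Both routes give the same constants and are tight on the same examples of section \ref{ExampleSection}; yours isolates the conceptual content (it is really a bound for the operator norm of the induced action on $\Lambda^pV$, and then on $\operatorname{Sym}^2(\Lambda^2V)$), while the paper's index bookkeeping is more elementary and treats (a)--(d) at once. Two details to make explicit if you write this up: in (b), your inequality $(\mathring h_{\max}-\mathring h_{\min})^2\le 2|\mathring h|^2$ uses that the extreme eigenvalues of $\mathring h$ occur at distinct indices (automatic for $n\ge 2$ unless $\mathring h=0$) --- or simply quote (a) with $k=2$, as the paper does; and in (d), the symmetrization of the double sum uses that $g(L\Xi_\alpha,\Xi_\beta)=-g(\Xi_\alpha,L\Xi_\beta)$, i.e.\ that $L$ acts skew-adjointly on $\Lambda^2V$, which is worth stating.
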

\begin{proof}
Choose an orthonormal basis $\lbrace e_i \rbrace$ for $V$ so that 
\begin{equation*}
L = \sum_{i=1}^{\floor{n/2}} \alpha_{2i-\frac{1}{2}} e_{2i-1} \wedge e_{2i}
\end{equation*}
and observe that $L e_i = (-1)^{i+1} \alpha_{i+\frac{(-1)^{i+1}}{2}} e_{i+(-1)^{i+1}}$.

In case (a) this yields
\begin{align*}
T \left( e_{i_1}, \ldots, L e_{i_j}, \ldots, e_{i_k} \right) 
& = (-1)^{i_j+1} \alpha_{i_j+\frac{(-1)^{i_j+1}}{2}} \ T \left( e_{i_1}, \ldots,  e_{i_j+(-1)^{i_j+1}},  \ldots, e_{i_k} \right)
\end{align*}
and 
\begin{align*}
| (L T)(e_{i_1}, \ldots, e_{i_k} ) |^2 & =\left| -  \sum_{j=1}^k T \left( e_{i_1}, \ldots, L e_{i_j}, \ldots, e_{i_k} \right) \right|^2 \\
& =  \left| - \sum_{j=1}^k (-1)^{i_j+1} \alpha_{i_j+\frac{(-1)^{i_j+1}}{2}} \ T_{{i_1} \ldots  i_j+(-1)^{i_j+1}  \ldots {i_k}} \right|^2 \\
& \leq \left( \sum_{j=1}^k \left(\alpha_{i_j+\frac{(-1)^{i_j+1}}{2}} \right)^2 \right) \left( \sum_{j=1}^k \left(T_{{i_1} \ldots  i_j+(-1)^{i_j+1}  \ldots {i_k}} \right)^2 \right) \\
& \leq k | L |^2 \sum_{j=1}^k \left(T_{{i_1} \ldots  i_j+(-1)^{i_j+1}  \ldots {i_k}} \right)^2.
\end{align*}
Summation over $i_1, \ldots, i_k$ implies
\begin{align*}
| L T |^2 
\leq k |L|^2 \sum_{i_1, \ldots, i_k} \sum_{j=1}^k \left(T_{{i_1} \ldots  i_j+(-1)^{i_j+1}  \ldots {i_k}} \right)^2 
\leq k^2 |L|^2 |T|^2.
\end{align*}

Case (b) follows from (a) and the observation that the trace-free part $\mathring{h} = h - \frac{\tr(h)}{n}g$ satisfies $Lh = L \mathring{ h}$ for all $L \in \mathfrak{so}(V).$

It suffices to prove (c) for $p \leq \floor{\frac{n}{2}}$ due to Hodge duality. Furthermore, assume $i_1 < \ldots < i_p$ in the above calculation. It follows that the coefficients $\alpha_{i_j+\frac{(-1)^{i_j+1}}{2}}$ that are summed over all correspond to different coefficients of $L.$ Indeed, a coefficient can only occur twice if there are consecutive indices $k, l$ such that
\begin{align*}
i_{k}+\frac{1}{2} = i_{l}-\frac{1}{2}.
\end{align*}
However, in this case observe that
\begin{equation*}
\alpha_{i_k + \frac{1}{2}} \omega \left( e_{i_1}, \ldots, e_{i_{k}+1}, e_{i_{l}}, \ldots, e_{i_p} \right) = 0 \ \text{ and } \ \alpha_{i_l - \frac{1}{2}} \omega \left( e_{i_1}, \ldots, e_{i_{k}}, e_{i_{l}-1}, \ldots, e_{i_p} \right) = 0
\end{equation*}
and hence these terms do not occur in the summation. Thus
\begin{align*}
| (L \omega) \left(e_{i_1}, \ldots, e_{i_p} \right) |^2
 \leq | L |^2 \sum_{j=1}^p \left( \omega \left( e_{i_1}, \ldots,  e_{i_j+(-1)^{i_j+1}},  \ldots, e_{i_p} \right) \right)^2
\end{align*}
and summation over $i_1 < \ldots < i_p$ yields the claim. 

Case (d) follows as in (c) by using the symmetries of $\Rm.$
\end{proof}

\begin{remarkroman}
The examples in section \ref{ExampleSection} show that the estimates in lemma \ref{EstimatesForTensors} cannot be improved without further assumptions.
\end{remarkroman}

\begin{corollary}
Every $h \in \operatorname{Sym}^2(V)$ satisfies
\begin{equation*}
|L (g \owedge h) |^2 \leq 4  |g \owedge \mathring{h} |^2 |L|^2
\end{equation*}
for all $L \in \mathfrak{so}(V).$ 

Furthermore, every algebraic curvature tensor $\Rm$ satisfies
\begin{align*}
\left| L \Rm \right|^2 \leq \left( 4 \left| \frac{1}{n-2} g \owedge \Riczero \right|^2 + 8 \left| W \right|^2 \right) \left| L \right|^2
\end{align*}
for all $L \in \mathfrak{so}(V).$ 
\label{LRmWithVanishingWeyl}
\end{corollary}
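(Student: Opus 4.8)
The plan is to reduce both inequalities to proposition \ref{NormOfKNProduct}, proposition \ref{LeibnizForKNProduct}, and lemma \ref{EstimatesForTensors}. For the first estimate, proposition \ref{LeibnizForKNProduct} together with $Lg=0$ gives
\begin{equation*}
L(g\owedge h) = (Lg)\owedge h + g\owedge(Lh) = g\owedge(Lh),
\end{equation*}
and since $\tr(Lh)=0=\tr(\mathring h)$, proposition \ref{NormOfKNProduct} yields $|L(g\owedge h)|^2 = 4(n-2)|Lh|^2$ and $|g\owedge\mathring h|^2 = 4(n-2)|\mathring h|^2$. Substituting the bound $|Lh|^2\le 4|\mathring h|^2|L|^2$ from lemma \ref{EstimatesForTensors}(b) then gives
\begin{equation*}
|L(g\owedge h)|^2 = 4(n-2)|Lh|^2 \le 16(n-2)|\mathring h|^2|L|^2 = 4|g\owedge\mathring h|^2|L|^2.
\end{equation*}

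For the curvature estimate I would start from the orthogonal decomposition $\Rm=\frac{\scal}{2(n-1)n}g\owedge g+\frac{1}{n-2}g\owedge\Riczero+W$. Applying $L$ and using that $Lg=0$ annihilates the scalar term, one obtains $L\Rm=\frac{1}{n-2}\,g\owedge(L\Riczero)+LW$. The decisive point is that the two summands on the right are orthogonal: in the orthogonal splitting $\operatorname{Sym}_B^2(\Lambda^2V) = \bigl(\R(g\owedge g)\oplus g\owedge\operatorname{Sym}_0^2(V)\bigr)\oplus\mathcal W$ each summand is an $\mathfrak{so}(V)$-invariant subspace (equivalently, the Ricci contraction $\operatorname{Sym}_B^2(\Lambda^2V)\to\operatorname{Sym}^2(V)$ is $O(n)$-equivariant, so its kernel $\mathcal W$ is invariant), hence $g\owedge(L\Riczero)\in g\owedge\operatorname{Sym}^2(V)$ and $LW\in\mathcal W$ are perpendicular. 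Therefore
\begin{equation*}
|L\Rm|^2 = \frac{1}{(n-2)^2}\,|g\owedge(L\Riczero)|^2 + |LW|^2 = \frac{1}{(n-2)^2}\,|L(g\owedge\Riczero)|^2 + |LW|^2.
\end{equation*}

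The first part of the corollary, applied to the trace-free tensor $h=\Riczero$, bounds the first term by $\frac{4}{(n-2)^2}|g\owedge\Riczero|^2|L|^2 = 4\bigl|\tfrac{1}{n-2}g\owedge\Riczero\bigr|^2|L|^2$, and lemma \ref{EstimatesForTensors}(d) bounds the second by $8|\mathring W|^2|L|^2$; since the identity operator of $\Lambda^2V$ lies in $g\owedge\operatorname{Sym}^2(V)$, which is orthogonal to $\mathcal W$, the Weyl operator is trace-free and $\mathring W = W$, so the second term is $8|W|^2|L|^2$. Adding the two bounds gives the claim.

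The only step that is not a mechanical substitution into earlier results is the orthogonality $g\owedge(L\Riczero)\perp LW$ (and the closely related identity $\mathring W=W$); both rest on the $\mathfrak{so}(V)$-invariance of the three pieces in the standard decomposition of algebraic curvature tensors, which I would record explicitly using proposition \ref{LeibnizForKNProduct} and the compatibility of $L$ with the Ricci contraction.
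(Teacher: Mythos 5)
Your proposal is correct and follows essentially the same route as the paper: reduce the first estimate to $L(g\owedge h)=g\owedge(Lh)$, $\tr(Lh)=0$, Proposition \ref{NormOfKNProduct} and Lemma \ref{EstimatesForTensors}(b), then split $L\Rm$ along the standard decomposition and bound the two pieces by the first part and Lemma \ref{EstimatesForTensors}(d). The only difference is that you spell out, via $O(n)$-equivariance, why $g\owedge(L\Riczero)$ and $LW$ remain orthogonal (and why $\mathring{W}=W$), a point the paper simply asserts with ``$L\Rm$ respects the orthogonal decomposition.''
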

\begin{proof} 
Lemma \ref{EstimatesForTensors} and the computations in section \ref{PreliminariesSection} immediately imply
\begin{align*}
|L (g \owedge h) |^2 
&= | g \owedge Lh |^2 = 4(n-2) |Lh|^2 + 4 \tr(Lh)^2 \\
& = 4(n-2) | L h |^2 \leq 16(n-2) | \mathring{h} |^2 |L|^2 \\
& = 4 |g \owedge \mathring{h} |^2 |L|^2.
\end{align*} 

Since $L \Rm$ respects the orthogonal decomposition of algebraic curvature tensors, it follows that
\begin{align*}
\left| L \Rm \right|^2 
=  | L \Rmzero |^2
= \left| \frac{1}{n-2} L (g \owedge \mathring{ \Ric}) + L W \right|^2 
=  \left| \frac{1}{n-2}  L (g \owedge \mathring{ \Ric}) \right|^2 + \left| L W \right|^2.
\end{align*}
Hence, the first part of this corollary and lemma \ref{EstimatesForTensors} imply the claim.
\end{proof}

In case $\dim V=4$ and $R$ is Einstein, $LR$ can be computed explicitly with the help of a Singer-Thorpe basis:

\begin{remarkroman}
Let $(V,g)$ be a $4$-dimensional Euclidean vector space. Due to results of Singer-Thorpe \cite{SingerThorpeEinstein}, an algebraic curvature operator $\mathfrak{R} \colon \Lambda^2 V \to \Lambda^2 V$ is Einstein if and only if it commutes with the Hodge star operator with respect to any orientation of $V$. Once an orientation is fixed, there is an orthonormal basis $e_1, e_2, e_3, e_4$ of $V$ such that 
\begin{align*}
\Xi_1 = \frac{1}{\sqrt{2}} \left( e_1 \wedge e_2 + e_3 \wedge e_4 \right), \ \
\Xi_2  = \frac{1}{\sqrt{2}} \left( e_1 \wedge e_3 - e_2 \wedge e_4 \right), \ \
\Xi_3  = \frac{1}{\sqrt{2}} \left(  e_1 \wedge e_4 + e_2 \wedge e_3 \right)
\end{align*}
is an orthonormal basis for the self-dual part $\Lambda^{+}V$,
\begin{align*}
\Xi_4 = \frac{1}{\sqrt{2}} \left( e_1 \wedge e_2 - e_3 \wedge e_4 \right), \ \
\Xi_5 = \frac{1}{\sqrt{2}} \left( e_1 \wedge e_3 + e_2 \wedge e_4 \right), \ \
\Xi_6 = \frac{1}{\sqrt{2}} \left(  e_1 \wedge e_4 - e_2 \wedge e_3 \right)
\end{align*}
is an orthonormal basis for the anti-self-dual part $\Lambda^{-}V$ and $\mathfrak{R}( \Xi_i ) = \lambda_i \Xi_i$ for $i =1 , \ldots, 6.$ Any such basis is called a Singer-Thorpe basis.

Moreover, with respect to the above basis $\lbrace \Xi_{\alpha} \rbrace$, the self-adjoint operator $\mathfrak{R} \colon \Lambda^2 V \to \Lambda^2V$ defined by $\mathfrak{R}( \Xi_i ) = \lambda_i \Xi_i$ for $i=1, \ldots, 6$ satisfies the first Bianchi identity if and only if $\lambda_1 + \lambda_2 + \lambda_3 = \lambda_4 + \lambda_5 + \lambda_6.$

Finally, notice that
\begin{equation*}
( \Xi_i ) \Xi_j = \begin{cases}
 \pm \sqrt{2} \Xi_k & \ \text{if } \ \lbrace i, j, k\rbrace= \lbrace 1,2,3 \rbrace \ \text{ or } \ \lbrace 4, 5,6 \rbrace \\
 0 & \ \text{otherwise.}
\end{cases}
\end{equation*}
\label{SingerThorpeBasis}
\end{remarkroman}

\begin{proposition}
Let $(V,g)$ be a $4$-dimensional oriented Euclidean vector space and suppose that $R \in \operatorname{Sym}_B^2(\Lambda^2V)$ is Einstein. Denote by $R_{\pm} \in \operatorname{Sym}_B^2(\Lambda^{\pm} V)$ the induced curvature tensors on $\Lambda^{\pm} V$, and by $L_{\pm}$ the orthogonal projections of $L \in \Lambda^2 V$ onto $\Lambda^{\pm}V$. It follows that
\begin{align*}
|LR|^2 = | L_{+} R_{+} |^2 + | L_{-} R_{-} |^2.
\end{align*}

In particular, if $\lbrace \Xi_{\alpha} \rbrace$ is a Singer-Thorpe basis for $\Lambda^2 V,$ $\lbrace \lambda_{\alpha} \rbrace$ denote the eigenvalues of $\mathfrak{R}$, and $L = \sum_{\alpha=1}^6 a_{\alpha} \Xi_{\alpha}$, then
\begin{equation*}
| LR |^2 = 4 \sum_{\gamma=1}^6 a_{\gamma}^2 ( \lambda_{\alpha} - \lambda_{\beta} )^2 \leq 4 \left( \lambda_{\max} - \lambda_{\min} \right)^2 | L |^2,
\end{equation*}
where the summation indices are such that $\alpha < \beta$ and $\Xi_{\alpha}, \Xi_{\beta}, \Xi_{\gamma}$ form a basis for $\Lambda^{+} V$ or $\Lambda^{-}V.$
\label{LRinSingerThorpeBasis}
\end{proposition}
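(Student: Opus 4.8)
\emph{Proof plan.} The idea is to reduce the statement to the splitting $\mathfrak{so}(4)=\mathfrak{so}(3)\oplus\mathfrak{so}(3)$ recorded in Remark \ref{SingerThorpeBasis} and then apply Proposition \ref{NormLRinTermsOfEigenvalues} on each three-dimensional factor. First I would fix a Singer--Thorpe basis $\Xi_1,\ldots,\Xi_6$ as in Remark \ref{SingerThorpeBasis}, so that $\mathfrak{R}(\Xi_\alpha)=\lambda_\alpha\Xi_\alpha$ with $\Xi_1,\Xi_2,\Xi_3$ spanning $\Lambda^{+}V$ and $\Xi_4,\Xi_5,\Xi_6$ spanning $\Lambda^{-}V$. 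Since $R$ is Einstein, $\mathfrak{R}$ commutes with the Hodge star, hence $R$, regarded as a $(0,2)$-tensor on $\Lambda^2V$, is block diagonal with respect to $\Lambda^2V=\Lambda^{+}V\oplus\Lambda^{-}V$: one has $R(\xi,\eta)=0$ whenever $\xi\in\Lambda^{+}V$ and $\eta\in\Lambda^{-}V$, and $R|_{\Lambda^{\pm}V}=R_{\pm}$.

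The key observation is that the derivation action of $L\in\mathfrak{so}(V)=\Lambda^2V$ on $\Lambda^2V$ preserves this splitting and decouples across the two summands. Writing $L=\sum_{\alpha=1}^6 a_\alpha\Xi_\alpha$, so that $L_{+}=a_1\Xi_1+a_2\Xi_2+a_3\Xi_3$ and $L_{-}=a_4\Xi_4+a_5\Xi_5+a_6\Xi_6$, the bracket relations $(\Xi_i)\Xi_j$ from Remark \ref{SingerThorpeBasis} give $L\Xi_j=L_{+}\Xi_j\in\Lambda^{+}V$ for $j\le 3$ and $L\Xi_j=L_{-}\Xi_j\in\Lambda^{-}V$ for $j\ge 4$, since the ``wrong'' summand annihilates $\Xi_j$. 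Combining this with the block-diagonal form of $R$ and the identity $(LR)(\xi,\eta)=-R(L\xi,\eta)-R(\xi,L\eta)$ used in the proof of Proposition \ref{NormLRinTermsOfEigenvalues}, every mixed $\Lambda^{+}$--$\Lambda^{-}$ component of $LR$ vanishes, while the $\Lambda^{+}$-block of $LR$ equals $L_{+}R_{+}$ and the $\Lambda^{-}$-block equals $L_{-}R_{-}$. Taking squared norms yields $|LR|^2=|L_{+}R_{+}|^2+|L_{-}R_{-}|^2$, the first assertion.

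For the explicit formula I would apply Proposition \ref{NormLRinTermsOfEigenvalues} to $R_{+}$ on the three-dimensional Euclidean space $\Lambda^{+}V$ with orthonormal eigenbasis $\Xi_1,\Xi_2,\Xi_3$ and eigenvalues $\lambda_1,\lambda_2,\lambda_3$, and likewise to $R_{-}$. Using the bracket table to evaluate $g(L_{+}\Xi_\alpha,\Xi_\beta)^2=2a_\gamma^2$ for $\alpha<\beta\le 3$, where $\gamma$ is the remaining index in $\{1,2,3\}$, collapses the three terms of that factor into $|L_{+}R_{+}|^2=4\sum_{\gamma\le 3}a_\gamma^2(\lambda_\alpha-\lambda_\beta)^2$, and similarly $|L_{-}R_{-}|^2=4\sum_{\gamma\ge 4}a_\gamma^2(\lambda_\alpha-\lambda_\beta)^2$; adding the two gives the stated identity. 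The final inequality is then immediate from $(\lambda_\alpha-\lambda_\beta)^2\le(\lambda_{\max}-\lambda_{\min})^2$ together with $\sum_{\gamma=1}^6 a_\gamma^2=|L|^2$. The only genuinely delicate point is the claim in the second paragraph that the $\mathfrak{so}(V)$-action on $\Lambda^2V$ respects the self-dual/anti-self-dual decomposition and acts diagonally with respect to it; but this is precisely the content of the bracket relations in Remark \ref{SingerThorpeBasis}, so no new computation is needed and everything else is bookkeeping in the Singer--Thorpe basis.
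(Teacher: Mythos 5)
Your argument is correct and follows essentially the same route as the paper: both rest on the Singer--Thorpe bracket relations to decouple the action of $L_{\pm}$ across $\Lambda^{+}V \oplus \Lambda^{-}V$ and then invoke Proposition \ref{NormLRinTermsOfEigenvalues} together with $g(L_{+}\Xi_{\alpha},\Xi_{\beta})^2 = 2a_{\gamma}^2$. The only cosmetic difference is that you first exhibit the block structure of $LR$ itself, whereas the paper computes $|L_{\pm}R|^2$ directly from the eigenvalue formula; the content is the same.
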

\begin{proof}
Notice that a Singer-Thorpe basis $\lbrace \Xi_{\alpha} \rbrace$ satisfies $g( L_{+} \Xi_{\alpha}, \Xi_{\beta} )=0$ if $\Xi_{\alpha} \in \Lambda^{-}V$ or $\Xi_{\beta} \in \Lambda^{-}V.$ Thus proposition \ref{NormLRinTermsOfEigenvalues} implies 
\begin{align*}
|L_{+} R |^2 
& = \sum_{\alpha, \beta = 1, \ldots, 6} ( \lambda_{\alpha} - \lambda_{\beta} )^2 g( L_{+} \Xi_{\alpha}, \Xi_{\beta} )^2 \\
& = \sum_{\alpha, \beta = 1, 2, 3 } ( \lambda_{\alpha} - \lambda_{\beta} )^2 g( L_{+} \Xi_{\alpha}, \Xi_{\beta} )^2 \\
&   = | L_{+} R_{+} |^2.
\end{align*}
Similarly one proves $ |L_{-} R |^2 = | L_{-} R_{-} |^2$ and the formulae for $|LR|^2.$ 
\end{proof}

\begin{remarkroman}
The curvature operator of $\C P^2$ with the Fubini-Study metric provides an example with equality in the estimate above: There is a Singer-Thorpe basis for the curvature operator of $(\C P^2,g_{FS})$ such that $\mathfrak{R}( \Xi_1 ) = \mathfrak{R}( \Xi_2 ) = 0,$ $\mathfrak{R}( \Xi_3 ) = 6 \Xi_3$ and $\mathfrak{R}( \Xi_i ) = 2 \Xi_i$ for $i=4,5,6.$ Thus $| \Xi_1 R|$ and $| \Xi_2 R|$ are maximal.
\label{LREstimateOptimalForCPTwo}
\end{remarkroman}

Let $\id \wedge \id$ denote the curvature tensor of the unit sphere. The computation of $| \hat{T} |^2$ in the propositions below relies on the observation that $|\hat{T}|^2 = g( \Ric_{\id \wedge \id}(T), T)$. 

\begin{proposition}
Let $(V,g)$ be an $n$-dimensional Euclidean vector space and  $T \in \mathcal{T}^{(0,k)}(V)$. It follows that
\begin{align*}
\Ric_{\id \wedge \id}(T) (X_1, \ldots, X_k) 
= & \ k (n-1)T (X_1, \ldots, X_k) + \sum_{i \neq j} ( T \circ \tau_{ij}) (X_1, \ldots, X_k) \\
  & \ - \sum_{i \neq j} g(X_i, X_j) c_{ij}(T)( X_1, \ldots, \widehat{X}_i, \ldots, \widehat{X}_j, \ldots, X_k ),
\end{align*}
where $\tau_{ij}$ denotes the transposition of the $i^{\text{th}}$ and $j^{\text{th}}$ entries and $c_{ij}(T)$ is the contraction of $T$ in the $i^{\text{th}}$ and $j^{\text{th}}$ entries.
\label{GeneralFormulaBochnerOnIdentity}
\end{proposition}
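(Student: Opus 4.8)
The plan is to compute $\Ric_{\id\wedge\id}(T)$ directly from the definition, exploiting the fact that for the unit sphere the algebraic curvature operator $\mathfrak{R}\colon\Lambda^2V\to\Lambda^2V$ is the identity. First I would record that, with the sign conventions of Section \ref{BochnerTechniqueSection}, the curvature endomorphism is $R(X,Y)=\mathfrak{R}(X\wedge Y)=X\wedge Y\in\mathfrak{so}(V)$, which acts on $V$ by $(X\wedge Y)(Z)=g(X,Z)Y-g(Y,Z)X$; this is immediate from the identity $(e_k\wedge e_l)e_i=\delta_{ki}e_l-\delta_{li}e_k$ already used in the proof of Proposition \ref{NormActionOnSymTensors}. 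Fixing an orthonormal basis $\{e_j\}$ of $V$ and unwinding
$$\Ric_{\id\wedge\id}(T)(X_1,\ldots,X_k)=\sum_{i=1}^k\sum_{j=1}^n\big((X_i\wedge e_j)T\big)(X_1,\ldots,e_j,\ldots,X_k)$$
together with the definition $(LT)(Y_1,\ldots,Y_k)=-\sum_l T(Y_1,\ldots,LY_l,\ldots,Y_k)$ of the derivation from Section \ref{SubsectionTheRegularRepresentation}, one arrives at a double sum over the ``outer'' index $i$ (the slot carrying $e_j$) and the ``differentiated'' index $l$.

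Next I would split this double sum according to whether $l=i$ or $l\neq i$. For $l=i$ the inserted vector is $(X_i\wedge e_j)(e_j)=g(X_i,e_j)e_j-X_i$, so summing over $j$ and using $\sum_j g(X_i,e_j)e_j=X_i$ gives $\sum_j(X_i\wedge e_j)(e_j)=X_i-nX_i=-(n-1)X_i$; by multilinearity of $T$ in that slot this contributes $(n-1)T(X_1,\ldots,X_k)$ for each $i$, hence $k(n-1)T$ after summing over $i$. For $l\neq i$ the inserted vector (in slot $l$, while slot $i$ still carries $e_j$) is $(X_i\wedge e_j)(X_l)=g(X_i,X_l)e_j-g(e_j,X_l)X_i$; the first piece, summed over $j$, yields $-g(X_i,X_l)$ times $\sum_j T(\ldots,e_j,\ldots,e_j,\ldots)$, which is exactly the contraction $c_{il}(T)$ evaluated on the remaining arguments, while the second piece, using $\sum_j g(e_j,X_l)e_j=X_l$, yields $T$ with $X_l$ in slot $i$ and $X_i$ in slot $l$, i.e. $(T\circ\tau_{il})(X_1,\ldots,X_k)$. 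Relabeling $l$ as $j$ and summing over all $i\neq j$ produces precisely the two remaining terms of the asserted formula.

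I do not expect a genuine obstacle here; the argument is a bookkeeping computation, best presented by writing out the inner double sum for a single pair $(i,l)$ and then summing. The only points that require care are (i) getting the action of $X_i\wedge e_j$ on $V$ with the correct sign, so that the leading term comes out as $+k(n-1)T$, consistent with the observation $|\hat{T}|^2=g(\Ric_{\id\wedge\id}(T),T)\ge 0$ recorded before Proposition \ref{ComputationBochnerTermWithEigenvalues}, and (ii) keeping track of which tensor slot each vector is inserted into when the outer index $i$ and the differentiated index $l$ coincide versus when they differ — this is exactly what separates the $k(n-1)T$ term from the transposition and contraction terms.
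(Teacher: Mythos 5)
Your proposal is correct and takes essentially the same route as the paper's proof: expand $\Ric_{\id\wedge\id}(T)$ directly from the definition using $R(X,Y)Z=(X\wedge Y)Z$ for the unit sphere, split the differentiated slot into the case where it coincides with the slot carrying $e_j$ and the case where it does not, and identify the three resulting contributions as $k(n-1)T$, the transposition terms, and the contraction terms. The sign bookkeeping you flag is precisely how the paper handles it (there the minus from the Ricci identity is absorbed by writing $e_a\wedge X_i$ instead of $X_i\wedge e_a$), so there is no gap.
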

\begin{proof}
Recall that the curvature tensor of the unit sphere satisfies 
\begin{equation*}
R(X,Y)Z = g(X,Z)Y - g(Y,Z)X = (X \wedge Y)(Z).
\end{equation*}
Let $\lbrace e_i \rbrace$ be an orthonormal basis for $V$. The claim now follows from the calculation:
\begin{align*}
\Ric_{\id \wedge \id}(T)(X_1, \ldots, X_k) 
= & \ \sum_{i=1}^k \sum_{a=1}^n ( R(X_i,e_a)T)(X_1, \ldots, e_a, \ldots, X_k) \\
= & \ \sum_{i \neq j} \sum_{a=1}^n T( X_1, \ldots, (e_a \wedge X_i) X_j, \ldots, e_a, \ldots, X_k) \\
& \ + \sum_{i=1}^k \sum_{a=1}^n T(X_1, \ldots, (e_a \wedge X_i)e_a, \ldots, X_k) \\
= & \ \sum_{i \neq j} \sum_{a=1}^n T( X_1, \ldots, g(e_a, X_j) X_i - g(X_i, X_j)e_a, \ldots, e_a, \ldots, X_k) \\
& \ + \sum_{i=1}^k \sum_{a=1}^n T(X_1, \ldots, X_i- g(e_a, X_i)e_a, \ldots, X_k) \\
= & \sum_{i \neq j} \sum_{a=1}^n T( X_1, \ldots, X_i, \ldots, g(e_a, X_j)e_a, \ldots, X_k) \\
& \ - \sum_{i \neq j} \sum_{a=1}^n g(X_i, X_j) T(X_1, \ldots, e_a, \ldots, e_a, \ldots, X_k) \\
& \ + k(n-1) T(X_1, \ldots, X_k).
\end{align*}
\end{proof}

\begin{proposition}
Let $(V,g)$ be an $n$-dimensional Euclidean vector space and let $\id \wedge \id$ denote the curvature tensor of the unit sphere. The following hold:
\begin{enumerate}
\item Every $h \in \operatorname{Sym}^2(V)$ satisfies
\begin{align*}
\Ric_{\id \wedge \id}(h) & = 2n \mathring{h}, \\
| \hat{h} |^2 & = | \hat{\mathring{h}} |^2 = 2n | \mathring{h} |^2.
\end{align*}
\item Every $p$-form $\omega$ satisfies
\begin{align*}
\Ric_{\id \wedge \id}(\omega) & = p(n-p) \omega, \\
| \hat{\omega} |^2 & = p(n-p) | \omega |^2.
\end{align*}
\item Every algebraic $(0,4)$-curvature tensor $\Rm$ and every $R \in \operatorname{Sym}_B^2(\Lambda^2V)$ satisfy
\begin{align*}
\Ric_{\id \wedge \id}(\Rm) & = 4(n-1) \Rm - 2 g \owedge \Ric, \\
| \widehat{\Rm} |^2 & = | \widehat{\mathring{\Rm}} |^2 = 4(n-1) | \Rmzero |^2 - 8 | \Riczero |^2, \\
| \hat{R} |^2 & = | \hat{\mathring{R}} |^2  = 4 (n-1) | \mathring{R} |^2  - 2 | \Riczero |^2 .
\end{align*}
In particular $\widehat{\Rm} = 0$ if and only if $\Rm = \frac{\kappa}{2} g \owedge g$ for some $\kappa \in \R.$
\end{enumerate}

\label{NormsOfHats}
\end{proposition}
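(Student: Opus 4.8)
The plan is to reduce all three cases to Proposition~\ref{GeneralFormulaBochnerOnIdentity} together with the identity $|\hat{T}|^2 = g(\Ric_{\id \wedge \id}(T),T)$ recorded just before the statement, which is the specialisation of $g(\Ric(S),T) = g(\mathfrak{R}(\hat{S}),\hat{T})$ to the unit sphere, whose curvature operator is the identity. The two equalities $|\hat{h}|^2 = |\hat{\mathring{h}}|^2$ and $|\widehat{\Rm}|^2 = |\widehat{\Rmzero}|^2$ then need no separate argument: since $Lg = 0$ forces $L(g \owedge g) = 0$, the tensors $\frac{\tr(h)}{n}g$ and $\frac{\scal}{2(n-1)n}\,g \owedge g$ are annihilated by every $L \in \mathfrak{so}(V)$, so $\hat{h} = \widehat{\mathring{h}}$, $\widehat{\Rm} = \widehat{\Rmzero}$, and likewise $\hat{R} = \hat{\mathring{R}}$.

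For (a) and (b) I would just feed $h$ and $\omega$ into Proposition~\ref{GeneralFormulaBochnerOnIdentity}. For $h \in \operatorname{Sym}^2(V)$ we have $k = 2$; the two transposition terms each contribute $h$ because $h$ is symmetric, and the two contraction terms each contribute $-\tr(h)\,g$, so $\Ric_{\id \wedge \id}(h) = 2(n-1)h + 2h - 2\tr(h)\,g = 2n\,\mathring{h}$, and pairing with $h$ gives $|\hat{h}|^2 = 2n\,g(\mathring{h},h) = 2n|\mathring{h}|^2$ since $\mathring{h}$ is trace-free. For a $p$-form $\omega$ we have $k = p$; every transposition acts by $-1$, so the $p(p-1)$ transposition terms contribute $-p(p-1)\omega$, while each contraction $c_{ij}(\omega)$ vanishes by antisymmetry; hence $\Ric_{\id \wedge \id}(\omega) = p(n-1)\omega - p(p-1)\omega = p(n-p)\omega$ and $|\hat{\omega}|^2 = p(n-p)|\omega|^2$.

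Case (c), with $k = 4$, is the substantial one, and the step I expect to be the main obstacle is showing that the transposition terms cancel, i.e.\ $\sum_{i \neq j} \Rm \circ \tau_{ij} = 0$. The pair symmetries give $\Rm \circ \tau_{12} = \Rm \circ \tau_{34} = -\Rm$, $\Rm \circ \tau_{13} = \Rm \circ \tau_{24}$, and $\Rm \circ \tau_{14} = \Rm \circ \tau_{23}$, while the first Bianchi identity supplies the missing relation $\Rm \circ \tau_{23} + \Rm \circ \tau_{24} = \Rm$; summing the six distinct transpositions then yields $-2\Rm + 2\Rm = 0$. For the contractions, $c_{12}(\Rm) = c_{34}(\Rm) = 0$ by antisymmetry and the remaining four equal $\pm\Ric$; keeping track of the metric factors $g(X_i,X_j)$ one checks that the contraction terms assemble exactly into $-2\,g \owedge \Ric$. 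Hence $\Ric_{\id \wedge \id}(\Rm) = 4(n-1)\Rm - 2\,g \owedge \Ric$. Pairing with $\Rm$ and invoking the elementary identity $g(g \owedge S,\Rm) = 4\,g(S,\Ric)$ for symmetric $S$ — a short index computation via the Ricci contraction — gives $|\widehat{\Rm}|^2 = 4(n-1)|\Rm|^2 - 8|\Ric|^2$.

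Finally I would convert this into the stated trace-free form. Orthogonality of the decomposition $\Rm = \frac{\scal}{2(n-1)n}\,g \owedge g + \frac{1}{n-2}\,g \owedge \Riczero + W$ together with Proposition~\ref{NormOfKNProduct} gives $|\Rm|^2 = |\Rmzero|^2 + \frac{2\scal^2}{(n-1)n}$ and $|\Ric|^2 = |\Riczero|^2 + \frac{\scal^2}{n}$, so the $\scal^2$-terms cancel and $|\widehat{\Rm}|^2 = 4(n-1)|\Rmzero|^2 - 8|\Riczero|^2$; the $R$-versions follow from $|\Rm|^2 = 4|R|^2$, $|\Rmzero|^2 = 4|\mathring{R}|^2$ and $|\widehat{\Rm}|^2 = 4|\hat{R}|^2$. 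For the last assertion I would expand once more using $|\Rmzero|^2 = \frac{4}{n-2}|\Riczero|^2 + |W|^2$ to get $|\widehat{\Rm}|^2 = \frac{8n}{n-2}|\Riczero|^2 + 4(n-1)|W|^2 \geq 0$, which vanishes exactly when $\Riczero = 0$ and $W = 0$, i.e.\ when $\Rm = \frac{\kappa}{2}\,g \owedge g$ with $\kappa = \frac{\scal}{(n-1)n}$; alternatively, $\widehat{\Rm} = 0$ means $\Rm$ is $\mathfrak{so}(V)$-invariant, and such algebraic curvature tensors form the line spanned by $g \owedge g$.
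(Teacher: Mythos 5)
Your proposal is correct and follows essentially the same route as the paper: it feeds $h$, $\omega$ and $\Rm$ into Proposition \ref{GeneralFormulaBochnerOnIdentity}, uses the first Bianchi identity to cancel the transposition terms and the Kulkarni--Nomizu computations to identify the contraction terms, and then obtains the norms from the pairing $|\hat{T}|^2 = g(\Ric_{\id\wedge\id}(T),T)$. The only cosmetic difference is that you first get $4(n-1)|\Rm|^2 - 8|\Ric|^2$ via $g(g\owedge \Ric,\Rm)=4|\Ric|^2$ and then pass to the trace-free form, whereas the paper evaluates $g(\Rm, g\owedge\Ric)$ directly through the orthogonal decomposition; both land on the same formulas and the same characterization of $\widehat{\Rm}=0$.
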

\begin{proof}
(a) Notice that $h \circ \tau_{ij} = h$ for every transposition $\tau_{ij}$. Hence
\begin{align*}
\Ric_{\id \wedge \id}(h) = h \circ \tau_{12} + h \circ \tau_{21} - 2 \tr(h) g + 2(n-1) h = 2n \left( h - \frac{\tr(h)}{n}g \right) = 2n  \mathring{h}.
\end{align*}
(b) Similarly $\omega \circ \tau_{ij} = - \omega$ for every transposition $\tau_{ij}$ and thus $c_{ij}(T) = 0$ for all $i \neq j$. This implies
\begin{align*}
\Ric_{\id \wedge \id}(\omega) = - \sum_{i \neq j} \omega + p(n-1) \omega = p(n-p) \omega.
\end{align*}
(c) Due to the symmetries of the curvature tensor
\begin{align*}
 \sum_{i \neq j} \Rm \circ \tau_{ij} & = 2 ( \Rm \circ \tau_{12} +  \Rm \circ \tau_{13} + \Rm \circ \tau_{14} +  \Rm \circ \tau_{23} +  \Rm \circ \tau_{24}+  \Rm \circ \tau_{34} ) \\
 & = -4 \Rm + 2 (  \Rm \circ \tau_{13} +  \Rm \circ \tau_{14} + \Rm \circ \tau_{23} + \Rm \circ \tau_{24} )
\end{align*}
which implies
\begin{align*}
 \sum_{i \neq j} \left( \Rm \circ \tau_{ij} \right) (X,Y,Z,W) 
 = & \ -4 \Rm(X,Y,Z,W) + 2 \lbrace \Rm( Z, Y, X, W) + \Rm( W, Y, Z, X)  \\
& \ +  \Rm( X, Z, Y, W)+ \Rm( X, W, Z, Y) \rbrace \\
= & \ -4 \lbrace \Rm(X,Y,Z,W) + \Rm(Y,Z,X,W) + \Rm(Z,X,Y,W)  \rbrace \\
= & \ 0
\end{align*}
due to the first Bianchi identity. For the remaining term one computes
\begin{align*}
\sum_{i \neq j} ( g( \cdot, \cdot) c_{ij}(\Rm)) & (X, Y, Z, W) \\
= & \ 2 \sum_{i=1}^n \lbrace g(X,Z) \Rm(e_i, Y, e_i, W) + g(X,W) \Rm(e_i, Y, Z, e_i) \\
  & \ + g(Y,Z) \Rm(X, e_i, e_i, W ) + g(Y,W) \Rm(X, e_i, Z, e_i) \rbrace \\
= & \ 2 \sum_{i=1}^n \lbrace g(X,Z) \Rm(Y, e_i, W, e_i) - g(X,W) \Rm(Y, e_i, Z,  e_i) \\
  & \ - g(Y,Z) \Rm( X, e_i, W,  e_i) + g(Y,W) \Rm(X, e_i, Z, e_i) \rbrace  \\
= & 2 (g \owedge \Ric)(X, Y, Z,W).
\end{align*}
To calculate $| \widehat{\Rm} |^2$ observe that 
\begin{equation*}
g( \Rm, g \owedge \Ric ) = \frac{\scal^2}{2n^2(n-1)} | g \owedge g|^2 + \frac{1}{n-2} | \mathring{\Ric} \owedge g |^2 = 4 \frac{\scal^2}{n} + 4 | \Riczero |^2
\end{equation*}
due to proposition \ref{NormOfKNProduct}. For the last claim observe that 
\begin{align*}
| \widehat{\Rm} |^2 & = 4 (n-1) | \Rmzero |^2 - 8 | \Riczero |^2 \\
& = 4(n-1) \left(\frac{1}{(n-2)^2} | g \owedge \Riczero |^2 + | W |^2 \right) - 8 | \Riczero |^2 \\
& = \left( 16 \frac{n-1}{n-2} - 8 \right) | \Riczero |^2 + 4 (n-1) |W|^2.
\end{align*}
In particular, $| \widehat{\Rm} |^2 = 0$ is equivalent to $| \Rmzero |^2=0$.
\end{proof}

\section{Geometric Applications}
\label{GeometricApplicationsSection}

This section contains the proofs of Theorem  \ref{BettiNumberEstimate} and Theorem \ref{GeneralizationTachibana}, and provides further applications of the Bochner technique to Weyl tensors and $(0,2)$-tensors. \vspace{2mm}

\textit{Proof} of Theorem \ref{BettiNumberEstimate}. By replacing $M$ with its orientation double cover, if necessary, it may be assumed that $M$ is orientable. Due to Poincar\'e duality it suffices to consider $p \leq \floor{\frac{n}{2}}.$ Let $\omega$ be a $p$-form. Lemma \ref{EstimatesForTensors} and proposition \ref{NormsOfHats} imply
\begin{equation*}
| L \omega |^2 \leq p | \omega |^2 |L|^2 = \frac{1}{n-p} | \hat{\omega} |^2 |L|^2
\end{equation*}
for all $L \in \mathfrak{so}(TM).$ 

If the eigenvalues of the curvature operator satisfy $\frac{1}{n-p}(\lambda_{1} + \ldots + \lambda_{n-p}) \geq \kappa,$ then lemma \ref{GeneralBochnerTermEstimate} yields
\begin{equation*}
g( \mathfrak{R}( \hat{w}), \hat{\omega} ) \geq \kappa | \hat{\omega} |^2 = \kappa p(n-p) | \omega |^2.
\end{equation*}

An application of the Bochner technique as in theorem \ref{BochnerTechniqueMachinery} to the Hodge Laplacian completes the proof. \hfill $\Box$ 

\vspace{2mm}

\textit{Proof} of Theorem \ref{GeneralizationTachibana}. Recall from example \ref{ExamplesLichnerowiczLaplacians} (c) that the curvature tensor of an Einstein manifold is harmonic. Hence it satisfies the Bochner formula
\begin{equation*}
\nabla^{*} \nabla \Rm + \frac{1}{2} \Ric(\Rm) = 0.
\end{equation*}

Moreover, since $\mathring{\Ric} = 0$, proposition  \ref{NormsOfHats} shows that $| \widehat{\Rm} |^2 = 4(n-1) | \mathring{\Rm} |^2$ and lemma \ref{EstimatesForTensors} implies
\begin{align*}
|L \Rm |^2 \leq 8 | \Rmzero |^2 |L|^2 = \frac{2}{n-1} | \widehat{\Rm} |^2 | L|^2
\end{align*}
for all $L \in \mathfrak{so}(TM).$ 

By assumption the eigenvalues of the curvature operator satisfy $\lambda_1 + \ldots + \lambda_{\floor{\frac{n-1}{2}}} \geq 0$ and thus lemma \ref{GeneralBochnerTermEstimate} implies 
\begin{align*}
g( \mathfrak{R}( \widehat{\Rm}), \widehat{\Rm}) \geq 0.
\end{align*}

An application of the Bochner technique as in theorem \ref{BochnerTechniqueMachinery} shows that $\Rm$ is parallel. Moreover, if $\lambda_1 + \ldots + \lambda_{\floor{\frac{n-1}{2}}} > 0,$ then $| \widehat{\Rm} |^2 = 0$ and thus $\Rm$ has constant sectional curvature due to proposition \ref{NormsOfHats}. This shows the claim in dimensions $n \geq 5.$

In dimension $n=4,$ choose a Singer-Thorpe basis $\lbrace \Xi_{\alpha} \rbrace$ as in remark \ref{SingerThorpeBasis}. Propositions \ref{ComputationBochnerTermWithEigenvalues} and \ref{LRinSingerThorpeBasis} show that 
\begin{align*}
g( \mathfrak{R}(\widehat{\Rm}), \widehat{\Rm}) = \sum \lambda_{\alpha} | \Xi_{\alpha} \Rm |^2 
= & \ 16 \left\lbrace \lambda_1 (\lambda_2 - \lambda_3)^2 + \lambda_2 (\lambda_1 - \lambda_3)^2 + \lambda_3 (\lambda_1 - \lambda_2)^2 \right. \\
& \ \ \ \ \ \left.  + \lambda_4 (\lambda_5 - \lambda_6)^2 + \lambda_5 (\lambda_4 - \lambda_6)^2 + \lambda_6 (\lambda_4 - \lambda_5)^2 \right\rbrace.
\end{align*}

Suppose that $\mathfrak{R}$ is $2$-nonnegative. After relabeling the basis, if needed, it may be assumed that
\begin{align*}
\lambda_1 + \lambda_2 \geq 0, \ \lambda_1 \leq \lambda_2 \leq \lambda_3, \ \lambda_4, \lambda_5, \lambda_6 \geq 0.
\end{align*}

Notice that $\lambda_2$ might not be the second smallest eigenvalue, but these conditions already imply
\begin{align*}
g( \mathfrak{R}(\widehat{\Rm}), \widehat{\Rm}) \geq 16 \left\lbrace (\lambda_1 + \lambda_2)( \lambda_1- \lambda_3)^2 + \lambda_3 (\lambda_1 - \lambda_2)^2 \right\rbrace  \geq 0.
\end{align*}

Furthermore, if $\mathfrak{R}$ is $2$-positive and $g( \mathfrak{R}(\widehat{\Rm}), \widehat{\Rm})=0,$ then the first Bianchi identity $\lambda_1 + \lambda_2 + \lambda_3 = \lambda_4 + \lambda_5 + \lambda_6$ implies that $\mathfrak{R}$ is a homothety. Hence $(M,g)$ has constant curvature at every tangent space and Schur's lemma implies the claim.
 \hfill $\Box$

\begin{remarkroman} (a) Recall that every irreducible Riemannian manifold with parallel Ricci tensor is Einstein, and hence Theorem \ref{GeneralizationTachibana} applies.

(b) Define $\mathfrak{R} \colon \Lambda^2 \R^4 \to \Lambda^2 \R^4$ on a Singer-Thorpe basis $\lbrace \Xi_{\alpha} \rbrace$ by $\mathfrak{R}( \Xi_{\alpha} ) = \lambda_{\alpha} \Xi_{\alpha}$ and set $\lambda_1 = \lambda_2 = - \lambda,$ $\lambda_3 = 8 \lambda,$  $\lambda_4 = \lambda_5 = \lambda_6 =2 \lambda$ for some $\lambda>0.$ It follows that $\mathfrak{R}$ is Einstein, $3$-nonnegative, and satisfies the first Bianchi identity due to remark \ref{SingerThorpeBasis}. However, notice that $g( \mathfrak{R}(\hat{R}), \hat{R}) < 0.$
\end{remarkroman}

A variation of the proof of Theorem \ref{GeneralizationTachibana} yields a rigidity result for Weyl tensors.

\begin{proposition}
\label{BochnerFormulaForWeylCurvature}
Let $(M,g)$ be a Riemannian manifold. If the Weyl curvature $W$ is divergence free, then $W$ satisfies the second Bianchi identity and 
\begin{align*}
\nabla^{*} \nabla W + \frac{1}{2} \Ric(W) = 0.
\end{align*}
\end{proposition}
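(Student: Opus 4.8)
The plan is to reduce the statement to the differential Bianchi identity for the full curvature tensor together with Example \ref{ExamplesLichnerowiczLaplacians}(c). The case $n=3$ is vacuous, since then $W\equiv 0$ and there is nothing to prove, so assume $n\geq 4$. Write the standard decomposition in the form $\Rm = g\owedge P + W$, where $P = \frac{1}{n-2}\Riczero + \frac{\scal}{2n(n-1)}g$ is $\frac{1}{n-2}$ times the Schouten tensor, so that $\frac{\scal}{2(n-1)n}g\owedge g + \frac{1}{n-2}g\owedge\Riczero = g\owedge P$.

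First I would use that $\Rm$ always satisfies the second Bianchi identity, $d^{\nabla}\Rm = 0$, and split this according to the decomposition above. Since $\nabla g = 0$, the Leibniz rule for the Kulkarni--Nomizu product (the $\nabla$-analogue of Proposition \ref{LeibnizForKNProduct}, with $d^{\nabla}g = 0$) gives $d^{\nabla}(g\owedge P) = g\owedge(d^{\nabla}P)$, where $d^{\nabla}P$ is the Cotton tensor $C$, i.e. the alternation of $\nabla P$ in its first two slots. Hence
\begin{equation*}
0 = d^{\nabla}\Rm = g\owedge C + d^{\nabla}W,
\end{equation*}
where $g\owedge C$ denotes the explicit curvature-type expression obtained above; in particular $d^{\nabla}W = 0$ if and only if $C = 0$.

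Next I would contract this identity (equivalently, compute $\operatorname{div}\Rm$ and take the appropriate trace of $d^{\nabla}\Rm$). A routine index computation with the curvature symmetries yields the classical relation $\operatorname{div}W = \frac{n-3}{n-2}\,C$ up to the choice of sign convention; for $n\geq 4$ this shows that the hypothesis $\operatorname{div}W = 0$ forces $C = 0$. Feeding this back into the displayed identity gives $d^{\nabla}W = 0$, which is precisely the assertion that $W$ satisfies the second Bianchi identity. Finally, $W$ is a section of the bundle of algebraic curvature tensors that satisfies the second Bianchi identity and is divergence free, so by Example \ref{ExamplesLichnerowiczLaplacians}(c) it is harmonic for the Lichnerowicz Laplacian with $c = \tfrac12$; applying this with $W$ in place of $\Rm$ gives $\nabla^{*}\nabla W + \tfrac12\Ric(W) = 0$. (The identification of the Hodge-type Laplacian $d^{\nabla}\delta + \delta d^{\nabla}$ with $\nabla^{*}\nabla + \tfrac12\Ric$ is valid on the whole algebraic curvature tensor bundle, hence applies to $W$ even though the Weyl subbundle is not parallel, and $d^{\nabla}W = \delta W = 0$ then forces harmonicity.)

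The main obstacle is the bookkeeping in the middle step: one has to verify cleanly that tracing the split Bianchi identity $0 = g\owedge C + d^{\nabla}W$ produces a \emph{nonzero} dimensional multiple of the Cotton tensor as $\operatorname{div}W$, so that the divergence-free hypothesis genuinely kills $C$ for $n\geq 4$; equivalently, that $d^{\nabla}W$ and $g\owedge C$ occupy complementary pieces. This is a calculation with the curvature symmetries and the Kulkarni--Nomizu product along the lines of the computations in Section \ref{PreliminariesSection}; once it is in place, the rest of the argument is formal.
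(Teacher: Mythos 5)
Your argument is correct and follows essentially the same route as the paper: decompose $\Rm$ into the Schouten part $g \owedge P$ plus $W$, use the second Bianchi identity for $\Rm$ together with the classical fact that $\nabla^{*}W$ is a nonzero dimensional multiple of the Cotton tensor for $n \geq 4$ (with your normalization of $P$ as the Schouten tensor the constant is $n-3$; only its nonvanishing matters), conclude that $\nabla^{*}W=0$ forces the second Bianchi identity for $W$, and then invoke Example \ref{ExamplesLichnerowiczLaplacians}(c) for the Bochner formula. The paper carries out the same steps, merely writing the Cotton--divergence relation explicitly and substituting it back into the Bianchi identity rather than keeping the Cotton tensor as an intermediate object.
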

\begin{proof}
These are known facts. A proof is included for convenience of the reader. Let 
\begin{align*}
P 
= - \frac{\scal}{2(n-1)(n-2)} g + \frac{1}{n-2} \Ric
\end{align*}
denote the Schouten tensor. Notice that $\Rm = P \owedge g + W$. The contracted second Bianchi identity implies $-2(n-1) (\nabla^{*} P) (X)  =  d \scal (X)$ and hence 
\begin{align*}
(\nabla_Y P)(X,V) - (\nabla_X P)(Y,V) = \frac{1}{n-3}(\nabla^{*}W)(V,X,Y).
\end{align*}
By combining expressions of this type into formulae like
\begin{align*}
(\nabla^{*}W) \left( g(Z,U)W, X, Y \right) - (\nabla^{*}W) \left( g(Z,W)U, X, Y \right) = (\nabla^{*}W) \left( (U \wedge W)Z, X, Y \right),
\end{align*}
one obtains with the second Bianchi identity
\begin{align*}
0 = & \ \left( \nabla_X \Rm \right)(Y,Z,U,W) + \left( \nabla_ Y\Rm \right)(Z, X, U,W) + \left( \nabla_Z \Rm \right)(X,Y,U,W) \\
= & \ \left( (\nabla_X P) \owedge g \right)(Y,Z,U,W) + \left( (\nabla_ Y  P) \owedge g \right)(Z, X, U,W) + \left( (\nabla_Z P) \owedge g \right)(X,Y,U,W) \\
 & \ + ( \nabla_X W )(Y,Z,U,W) + ( \nabla_Y W )(Z,X,U,W) + ( \nabla_Z W )(X,Y,U,W) \\
 = & \ \frac{1}{n-3} \left\lbrace ( \nabla^{*}W ) \left( (U \wedge W)Z, X, Y \right) + ( \nabla^{*}W ) \left( (U \wedge W)X, Y, Z \right) \right. \\
 & \hspace{14mm} \left. + ( \nabla^{*}W ) \left( (U \wedge W)Y, Z, X \right) \right\rbrace \\
 & \ + ( \nabla_X W )(Y,Z,U,W) + ( \nabla_Y W )(Z,X,U,W) + ( \nabla_Z W )(X,Y,U,W).
\end{align*}
Thus $\nabla^{*}W=0$ implies that $W$ satisfies the second Bianchi identity. In this case, the Bochner formula for $W$ is a consequence of  remark \ref{ExamplesLichnerowiczLaplacians} (c), see \cite[Theorem 9.4.2]{PetersenRiemGeom} for an explicit calculation.
\end{proof}

Notice that the formulae above and 
\begin{align*}
(\nabla^{*} \Rm)(Z,X,Y)  = & \ (\nabla_Y \Ric)(X,Z) - (\nabla_X \Ric)(Y,Z) \\
= & \ - (\nabla^{*}P)(Y) g(X,Z) + (\nabla^{*}P)(X)g(Y,Z) + \frac{n-2}{n-3} \nabla^{*}W(Z,X,Y)
\end{align*} 
imply the well known fact that the curvature tensor of a Riemannian manifold is divergence free if and only if the scalar curvature is constant and the Weyl curvature is divergence free, cf. remark \ref{ExamplesLichnerowiczLaplacians} (c).

The following corollary shows that a result similar to Theorem \ref{GeneralizationTachibana} applies to divergence free Weyl tensors.

\begin{corollary}
Let $(M,g)$ be a closed connected $n$-dimensional Riemannian manifold. Suppose that the Weyl curvature satisfies $\nabla^{*} W = 0.$ If the eigenvalues $\lambda_1 \leq \ldots \leq \lambda_{\binom{n}{2}}$ of the curvature operator satisfy 
\begin{align*}
 \lambda_1 + \ldots + \lambda_{\floor{\frac{n-1}{2}}} \geq 0   \ \text{for} \ n \geq 4,
\end{align*}
then the Weyl curvature $W$ is parallel. Moreover, if the inequality is strict, then $(M,g)$ is conformally flat.
\label{BochnerForWeyl}
\end{corollary}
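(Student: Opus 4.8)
The plan is to imitate the proof of Theorem \ref{GeneralizationTachibana} verbatim, replacing the full curvature tensor $\Rm$ by the Weyl tensor $W$ throughout. The starting point is Proposition \ref{BochnerFormulaForWeylCurvature}: since $\nabla^{*}W = 0$, the Weyl tensor satisfies the second Bianchi identity and hence the Bochner formula $\nabla^{*}\nabla W + \tfrac12 \Ric(W) = 0$. Thus $W$ lies in the kernel of a Lichnerowicz Laplacian with $c = \tfrac12$ on the subbundle $E \subset \mathcal{T}^{(0,4)}(M)$ whose fiber is the space of algebraic Weyl tensors, and Theorem \ref{BochnerTechniqueMachinery} applies once we control the curvature term $g(\mathfrak{R}(\widehat{W}), \widehat{W})$ from below.

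The two ingredients needed to invoke Lemma \ref{GeneralBochnerTermEstimate} are an estimate of the form $|LW|^2 \leq \tfrac{1}{C}|\widehat{W}|^2|L|^2$ and a matching count $\floor{C} \geq \floor{\frac{n-1}{2}}$. For the numerator, Lemma \ref{EstimatesForTensors}(d) gives $|LW|^2 \leq 8|\mathring{W}|^2|L|^2 = 8|W|^2|L|^2$, using that $W$ is already trace-free so $\mathring{W} = W$. For the denominator I would compute $|\widehat{W}|^2$ directly, exactly as $|\widehat{\Rm}|^2$ was computed in Proposition \ref{NormsOfHats}(c) via $|\widehat{W}|^2 = g(\Ric_{\id\wedge\id}(W), W)$: feeding $W$ into the formula of Proposition \ref{GeneralFormulaBochnerOnIdentity}, the permutation terms $\sum_{i\neq j} W \circ \tau_{ij}$ vanish by the first Bianchi identity for $W$ just as for $\Rm$, and all contractions $c_{ij}(W)$ vanish since $W$ is totally trace-free, leaving $\Ric_{\id\wedge\id}(W) = 4(n-1)W$ and hence $|\widehat{W}|^2 = 4(n-1)|W|^2$. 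Combining, $|LW|^2 \leq 8|W|^2|L|^2 = \tfrac{2}{n-1}|\widehat{W}|^2|L|^2$, so we may take $C = \tfrac{n-1}{2}$, and $\floor{C} = \floor{\frac{n-1}{2}}$ matches the hypothesis precisely. Lemma \ref{GeneralBochnerTermEstimate} with $\kappa = 0$ then gives $g(\mathfrak{R}(\widehat{W}), \widehat{W}) \geq 0$, and Theorem \ref{BochnerTechniqueMachinery} forces $W$ to be parallel; if the inequality $\lambda_1 + \ldots + \lambda_{\floor{\frac{n-1}{2}}} > 0$ is strict, the same lemma gives strict positivity unless $\widehat{W} = 0$, so $\widehat{W} = 0$ identically, i.e. $|W|^2 = 0$ pointwise, which means $(M,g)$ is conformally flat.

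The mild subtlety — and the only place I would slow down — is that Theorem \ref{BochnerTechniqueMachinery} requires the subbundle $E$ to be $\nabla$-parallel with constant fiber dimension and requires the Ricci lower bound $\Ric(M) \geq (n-1)\kappa$ with $\kappa = 0$; the latter is immediate since the sum of the lowest $n-1$ eigenvalues of $\mathfrak{R}$ dominates $\floor{\frac{n-1}{2}}$-term averages only after checking the nonnegativity of $\lambda_1 + \cdots + \lambda_{n-1}$, which follows from the hypothesis because the missing eigenvalues $\lambda_{\floor{(n-1)/2}+1}, \ldots, \lambda_{n-1}$ are each $\geq \lambda_{\floor{(n-1)/2}+1} \geq 0$ (as already noted in the remark following Theorem \ref{BochnerTechniqueMachinery}). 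The bundle of algebraic Weyl tensors is parallel because both the full curvature bundle and the trace (Ricci-type) contractions are parallel operations. Note also the dimensional restriction $n \geq 4$: for $n = 3$ the Weyl tensor vanishes identically and the statement is vacuous, while Proposition \ref{BochnerFormulaForWeylCurvature} implicitly needs $n \geq 4$ for the Schouten decomposition $\Rm = P \owedge g + W$ and for $n-3$ to appear in a denominator — so the corollary is correctly stated only for $n \geq 4$. With these bookkeeping points addressed, the proof is a direct transcription of the argument for Theorem \ref{GeneralizationTachibana} in dimensions $n \geq 5$, with no analogue of the delicate four-dimensional Singer-Thorpe case required.
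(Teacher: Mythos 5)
Your argument is correct and coincides with the paper's own proof: the same chain of Proposition \ref{BochnerFormulaForWeylCurvature}, the estimate $|LW|^2 \leq 8|W|^2|L|^2 = \tfrac{2}{n-1}|\widehat{W}|^2|L|^2$ from Lemma \ref{EstimatesForTensors} and Proposition \ref{NormsOfHats} (your direct computation of $|\widehat{W}|^2 = 4(n-1)|W|^2$ is just the specialization of that proposition to the trace-free case), then Lemma \ref{GeneralBochnerTermEstimate} with $C = \tfrac{n-1}{2}$ and Theorem \ref{BochnerTechniqueMachinery}, with $\widehat{W}=0$ forcing $W=0$ in the strict case. Your added bookkeeping on the Ricci lower bound and the parallel subbundle is consistent with the paper's remarks and does not change the route.
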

\begin{proof}
Lemma \ref{EstimatesForTensors} and proposition \ref{NormsOfHats} imply that 
\begin{align*}
|L W |^2 \leq 8 | W |^2 |L|^2 = \frac{2}{n-1} | \widehat{W} |^2 | L|^2
\end{align*}
for all $L \in \mathfrak{so}(TM).$ Thus $ \lambda_1 + \ldots + \lambda_{\floor{\frac{n-1}{2}}} \geq 0$ implies that $g(\mathfrak{R}(\widehat{W}),\widehat{W}) \geq 0$ due to lemma \ref{GeneralBochnerTermEstimate}. An application of the Bochner technique as in theorem \ref{BochnerTechniqueMachinery} to the Bochner formula in proposition \ref{BochnerFormulaForWeylCurvature} implies the claim.
\end{proof}

\begin{exampleroman}
The manifold $\R P^n \# \overline{\R P^n}$ has a conformally flat metric with constant positive scalar curvature and non-parallel Ricci tensor:

Consider the warped product $I \times S^{n-1}$ with metric $dr^2 + \rho^2 ds_{n-1}^2.$ Its scalar curvature is given by
\begin{align*}
\scal = - 2(n-1) \frac{\ddot{\rho}}{\rho} + (n-2)(n-1) \frac{1- \dot{\rho}^2}{\rho^2}.
\end{align*}
To obtain constant scalar curvature $\scal = 2(n-1)$, set $x=\rho$, $y=\dot{\rho}$ and consider the ODE
\begin{align*}
\dot{x} & = y, \\
\dot{y} & = - \frac{1}{x} \left( x^2 + \frac{n-2}{2} y^2 - \frac{n-2}{2} \right). 
\end{align*}
Notice that if $(x(t),y(t))$ is a solution, then $(x(-t),-y(-t))$ is also a solution. This implies that the fixed point $(\sqrt{\frac{n-2}{2}},0),$ which is a linear center, is in fact a center for the non-linear ODE. It follows that there is a trajectory in the first quadrant which starts at $(x_0,0)$ with $0<x_0^2<\frac{n-2}{2}$ and which eventually crosses the $x$-axis at $(x_1,0)$ with $x_1^2>\frac{n-2}{2}$. This solution induces $\rho$ with $(\rho(0),\dot{\rho}(0))=(x_0,0)$ and $(\rho(T),\dot{\rho}(T))=(x_1,0)$ as required. 
\end{exampleroman}

In the case of symmetric $(0,2)$-tensors it follows as before that $g( \mathfrak{R}(\hat{h}), \hat{h}) \geq \kappa |\hat{h}|^2$ provided that $\lambda_1 + \ldots + \lambda_{\floor{\frac{n}{2}}} \geq \kappa \floor{\frac{n}{2}}.$ In fact, the following proposition shows that the curvature term $g( \mathfrak{R}(\hat{h}), \hat{h})$ can be controlled by sums of $\floor{\frac{n}{2}}$ complex sectional curvatures. This is to be expected given the previous results of Berger \cite{BergerCompactEinstein}, \cite{BergerKaehlerEinstein} and Simons \cite{SimonsMinimalVarieties} for symmetric $(0,2)$-tensors, Micallef-Wang \cite{MicallefWangNIC} for $2$-forms and the first author's \cite{PetersenRiemGeom} combined proof; see also the related work of Bettiol-Mendes \cite{BettiolMendesWeitzenboeck}.

\begin{proposition}
Let $\mathfrak{R} \colon \Lambda^2V \to \Lambda^2V$ be an algebraic curvature operator so that for every orthonormal basis $e_1, \ldots, e_n$ for $V \otimes \C$ the sum of any $\floor{\frac{n}{2}}$ complex sectional curvatures of the form $K_{ij}^{\C} = \mathfrak{R} \left(e_i \wedge e_j, \overline{e_i \wedge e_j} \right)$, $i<j$, is nonnegative.

If $H \colon V \to V$ is normal and $h$ denotes the associated $(0,2)$-tensor, then $g( \mathfrak{R}( \hat{h}), \hat{h} ) \geq 0$. 
\label{BochnerZeroTwoTensors}
\end{proposition}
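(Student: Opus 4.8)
The plan is to reduce everything to an eigenvalue/eigenbasis computation of $g(\mathfrak{R}(\hat h),\hat h)$ and then recognize the resulting expression as a sum of complex sectional curvatures. Since $H$ is normal and $h$ is the associated $(0,2)$-tensor, I would first diagonalize $H$ over $\C$: there is a unitary basis $e_1,\dots,e_n$ of $V\otimes\C$ with $He_i=\mu_ie_i$, where the $\mu_i$ occur in complex-conjugate pairs (and real eigenvalues for the remaining vectors). Extend all the constructions of Section \ref{PreliminariesSection} $\C$-bilinearly. Using Proposition \ref{ComputationBochnerTermWithEigenvalues} in the form $g(\mathfrak{R}(\hat h),\hat h)=\sum_\alpha g(\mathfrak{R}(\Xi_\alpha),\Xi_\alpha T)$-type contractions — more precisely $g(\mathfrak R(\hat h),\hat h)=\sum_{\alpha}\langle \mathfrak R(\Xi_\alpha T),\cdot\rangle$ — together with the basis $\{e_i\wedge e_j\}$ for $\Lambda^2(V\otimes\C)$, I would expand $g(\mathfrak R(\hat h),\hat h)$ in this basis. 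The key computational input is to evaluate $(e_i\wedge e_j)h$ where $e_i\wedge e_j$ acts as an element of $\mathfrak{so}(V\otimes\C)$ via the derivation formula of Section \ref{SubsectionTheRegularRepresentation}; because $h$ is diagonal in this basis, $(e_i\wedge e_j)h$ should be proportional to $(\mu_i-\mu_j)$ times a simple symmetric combination of $e^i\otimes e^j$ and $e^j\otimes e^i$. This mirrors Proposition \ref{NormActionOnSymTensors}, now carried out over $\C$.

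Carrying out that expansion, I expect to find
\[
g(\mathfrak R(\hat h),\hat h)=\sum_{i<j} c\,|\mu_i-\mu_j|^2\, K_{ij}^{\C}
\]
for a fixed positive constant $c$, where $K_{ij}^{\C}=\mathfrak R(e_i\wedge e_j,\overline{e_i\wedge e_j})$ (the factor of $|\mu_i-\mu_j|^2$ rather than $(\mu_i-\mu_j)^2$ arises because $\hat h$ pairs $\mathfrak R$ against its own conjugate, exactly as in the real formula $|Lh|^2=2\sum_{i<j}(h_i-h_j)^2 g(Le_i,e_j)^2$ but now with Hermitian inner products). The crucial structural point is that the weights $|\mu_i-\mu_j|^2$ are \emph{nonnegative}, so $g(\mathfrak R(\hat h),\hat h)$ is a nonnegatively weighted combination of the complex sectional curvatures $K_{ij}^{\C}$.

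To conclude, I would invoke the hypothesis: the sum of \emph{any} $\floor{n/2}$ of the $K_{ij}^{\C}$ is nonnegative. A nonnegatively weighted sum $\sum_{i<j}w_{ij}K_{ij}^{\C}$ with $w_{ij}\ge 0$ is nonnegative as soon as the sum of any $m$ of the $K_{ij}^{\C}$ is nonnegative and the number of strictly positive weights is controlled — but more carefully, one only needs the following elementary fact: if every sum of $\floor{n/2}$ of a list of real numbers $a_1,\dots,a_N$ is nonnegative, then $\sum w_i a_i\ge 0$ for all weights $w_i\ge 0$, provided at most $\floor{n/2}$ of the $a_i$ are negative; and in fact the condition "every sum of $m$ is nonnegative" forces at most $m$ of them to be negative (otherwise pick $m$ negative ones). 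Then a standard rearrangement/averaging argument — group the indices so that each negative $a_i$ is paired against enough positive ones — shows the weighted sum is nonnegative. Thus $g(\mathfrak R(\hat h),\hat h)\ge 0$.

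The main obstacle I anticipate is the bookkeeping in the complexified computation of $(e_i\wedge e_j)h$ and the correct identification of the Hermitian pairing so that the real quantity $g(\mathfrak R(\hat h),\hat h)$ comes out as $\sum w_{ij}K_{ij}^{\C}$ with provably nonnegative $w_{ij}$ — in particular making sure that the cross terms $i,j$ versus $\bar i,\bar j$ (conjugate eigenvectors) assemble correctly and that no spurious sign appears. Once the formula $g(\mathfrak R(\hat h),\hat h)=\sum_{i<j}c\,|\mu_i-\mu_j|^2K_{ij}^{\C}$ is in hand, the final deduction from the curvature hypothesis is the same elementary averaging argument already used implicitly in Lemma \ref{GeneralBochnerTermEstimate}.
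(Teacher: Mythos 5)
Your first step is right and matches the paper: diagonalizing the normal operator $H$ over $\C$ and expanding in the basis $\lbrace e_i \wedge e_j \rbrace$ does give
\begin{equation*}
g( \mathfrak{R}( \hat{h}), \hat{h} ) = \sum_{i,j} \left| h_i - \bar{h}_j \right|^2 K_{ij}^{\C},
\end{equation*}
which is exactly the identity the paper quotes from \cite[proposition 9.4.12]{PetersenRiemGeom}. The gap is in your final step. The ``elementary fact'' you invoke is false: if every sum of $m$ numbers from a list is nonnegative and at most $m-1$ of them are negative, it does \emph{not} follow that an arbitrary nonnegatively weighted sum is nonnegative. Take $a_1=-1$, $a_2=a_3=1$ with $m=2$: every $2$-sum is nonnegative, yet the weights $w_1=10$, $w_2=w_3=1$ give a negative total. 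So ``a standard rearrangement/averaging argument'' cannot work for arbitrary weights $w_{ij}\geq 0$; you must use the specific structure of the weights $\left|h_i-\bar{h}_j\right|^2$. Indeed, the remark immediately following the proposition in the paper exhibits eigenvalues $h_1=-\lambda$, $h_n=\lambda$, all others zero, for which the negative curvature $K_{1n}$ carries four times the weight of each positive $K_{1i}$, $K_{in}$ — so the weights genuinely skew toward the bad term, naive averaging fails, and the threshold $\floor{\frac{n}{2}}$ is sharp.

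What the paper actually does at this point is the real content of the proof: among all pairs with $K_{ij}\leq 0$ choose one, say $(1,n)$, maximizing $\left|h_1-\bar{h}_n\right|$; bound the whole nonpositive part below by $\left|h_1-\bar{h}_n\right|^2\sum_{K_{ij}\leq 0}K_{ij}$; then count that at least $n-m-1$ intermediate indices $\alpha$ satisfy both $K_{1\alpha}>0$ and $K_{\alpha n}>0$ (where $m\leq \floor{\frac{n}{2}}-1$ is the number of nonpositive curvatures), and use the triangle-type inequality $\left|h_1-\bar{h}_\alpha\right|^2+\left|h_\alpha-\bar{h}_n\right|^2\geq \frac{1}{2}\left|h_1-\bar{h}_n\right|^2$ to extract, from each such $\alpha$ (paired two at a time), at least $\frac{1}{2}\left|h_1-\bar{h}_n\right|^2$ times a positive curvature. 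This assembles a common factor $\left|h_1-\bar{h}_n\right|^2$ in front of a sum of at least $\floor{\frac{n-m-1}{2}}+m\geq\floor{\frac{n}{2}}$ curvatures, at most $m$ of which are nonpositive, and only then does the hypothesis apply. Without an argument of this kind — exploiting that a large weight on a negative $K_{1n}$ forces comparably large weights on many positive $K_{1\alpha}$, $K_{\alpha n}$ — your proof does not go through.
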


\begin{proof}
There is an orthonormal basis $e_1, \ldots, e_n$ for $V \otimes \C$ such that $H(e_i) = h_i e_i$ for $i=1, \ldots, n$. It follows as in \cite[proposition 9.4.12]{PetersenRiemGeom} that
\begin{align*}
g( \mathfrak{R}( \hat{h}), \hat{h} )  = \sum_{i,j} g \left( \mathfrak{R}(\hat{h}(e_i, e_j)), \overline{ \hat{h}( e_i, e_j) } \right)  = \sum_{i, j} \left| h_i-\bar{h}_j \right|^2 g \left( \mathfrak{R}(e_i \wedge e_j), \overline{e_i \wedge e_j} \right).
\end{align*}

For notational simplicity write $K_{ij} = g \left( \mathfrak{R}(e_i \wedge e_j), \overline{e_i \wedge e_j} \right).$ By assumption there are at most $\floor{\frac{n}{2}}$ curvatures with $K_{ij} \leq 0$. Moreover, if there are $\floor{\frac{n}{2}}$ curvatures $K_{ij} \leq 0$, then they are all zero. It may be assumed that there is at least one $K_{ij} \leq 0$. More precisely, suppose are $m$ curvatures $K_{ij} \leq 0$ for some $m \in \lbrace 1, \ldots, \floor{\frac{n}{2}} -1 \rbrace.$ The indices can be rearranged, if necessary, so that $\left| h_1 - \bar{h}_n \right|$ is maximal among all terms $\left| h_i - \bar{h}_j \right|$ with $K_{ij} \leq 0$.  Set
\begin{align*}
a & = \left| \left\lbrace i \in \lbrace 2, \ldots, n-1 \rbrace \ \vert \ K_{1i} \leq 0 \right\rbrace \right|, \\
b & = \left| \left\lbrace j \in \lbrace 2, \ldots, n-1 \rbrace \ \vert \ K_{jn} \leq 0 \right\rbrace \right|.
\end{align*}
Notice that by assumption $a+b \leq m-1$ and that there are 
\begin{align*}
& n-2-a \ \text{ values with } \ K_{1i} > 0, \\
& n-2-b \ \text{ values with } \ K_{jn} > 0.
\end{align*}

This implies that there are at least $l =n-2-(a+b)$ indices $i_1, \ldots, i_l \in \lbrace 2, \ldots, n-1 \rbrace$ such that $K_{1 i_j} > 0$ and $K_{i_j n} >0.$ Notice that $l \geq n-m-1$ and thus
\begin{align*}
\sum_{i<j} \left|h_i - \bar{h}_j \right|^2 K_{ij} \geq & 
\sum_{\alpha \in \lbrace i_1, \ldots, i_l \rbrace} \left\lbrace \left| h_1- \bar{h}_{\alpha} \right|^2 K_{1{\alpha}} + \left| h_{\alpha}-\bar{h}_n \right|^2 K_{{\alpha}n} \right\rbrace
+ \sum_{\substack{i<j \\ K_{ij} \leq 0}} \left| h_i - \bar{h}_j \right|^2 K_{ij} \\
\geq & \  \sum_{\alpha \in \lbrace i_1, \ldots, i_l \rbrace} \frac{1}{2} \left|h_1 - \bar{h}_n \right|^2 \min \left\lbrace K_{1 \alpha}, K_{\alpha n} \right\rbrace 
 + \left| h_1 - \bar{h}_n \right|^2 \sum_{\substack{i<j \\ K_{ij} \leq 0}} K_{ij} \\
\geq & \  \left|h_1 - \bar{h}_n \right|^2 \sum_{j = 1}^{\floor{\frac{n-m-1}{2}}}  \min \left\lbrace K_{1 i_j}, K_{i_j n}, K_{1 i_{j+\floor{\frac{n-m-1}{2}}}}, K_{i_{j+\floor{\frac{n-m-1}{2}}} n} \right\rbrace  \\ 
& \ + \left|h_1 - \bar{h}_n \right|^2  \sum_{\substack{i<j \\ K_{ij} \leq 0}} K_{ij}.
\end{align*} 

The last line yields a sum over $\floor{\frac{n-m-1}{2}} + m \geq \floor{\frac{n}{2}}$ curvatures, only $m \leq \floor{\frac{n}{2}}-1$ of which are nonpositive. Thus, by assumption, this sum is nonnegative.
\end{proof}

\begin{remarkroman}
If $h$ is a symmetric $(0,2)$-tensor, then the curvature term $g( \mathfrak{R}( \hat{h}), \hat{h} )$ is controlled by sums of $\floor{\frac{n}{2}}$ sectional curvatures. 

If $h$ is a $2$-form, then $g( \mathfrak{R}( \hat{h}), \hat{h} )$ can be controlled by sums of $\frac{n}{2}$ isotropic curvatures if $n$ is even and by sums of $\frac{n-1}{2}$ isotropic curvatures of $V \times \R$ if $n$ is odd.
\end{remarkroman}

\begin{remarkroman}
The assumptions in proposition \ref{BochnerZeroTwoTensors} cannot be weakened to sums of more than $\floor{\frac{n}{2}}$ curvatures: If the eigenvalues of $H$ satisfy 
\begin{align*}
h_1 = - \lambda, \ h_2 = \ldots = h_{n-1} = 0 \ \text{and} \ h_n= \lambda \ \text{for some} \ \lambda>0
\end{align*}
and the complex sectional curvatures satisfy 
\begin{align*}
K_{1n}<0 \ \text{and} \ K_{12} = \ldots = K_{1 n-1} = K_{2n} = \ldots = K_{n-1 n} = K > 0,
\end{align*}
then the curvature term in proposition \ref{BochnerZeroTwoTensors} satisfies
\begin{align*}
g(\mathfrak{R}( \hat{h}), \hat{h}) = 2 \sum_{i<j} \left|h_i - \bar{h}_j \right|^2 K_{ij} = 8 \lambda^2  \left\lbrace K_{1n} + \frac{n-2}{2} K \right\rbrace. 
\end{align*}

Notice that every $\mathfrak{R} \in \operatorname{Sym}^2( \Lambda^2V)$ with an eigenbasis consisting of decomposable elements automatically satisfies the first Bianchi identity. In particular, there is an algebraic curvature operator $\mathfrak{R}$ so that every sum $\sum K_{ij}$ over $\floor{\frac{n}{2}}+1$ of its sectional curvatures is positive but $g(\mathfrak{R}( \hat{h}), \hat{h})<0.$
\end{remarkroman}

\section{Examples}
\label{ExampleSection} 
Doubly warped product metrics on $S^n$ with special curvature properties, separating the curvature condition $\lambda_1 + \ldots + \lambda_{n-p} >0$ of Theorem \ref{BettiNumberEstimate} from known Ricci flow invariant curvature conditions, are discussed in example \ref{DoublyWarpedProducts}.

Examples \ref{OptimalitySymmetricTensors}, \ref{OptimalityTwoForms} and \ref{OptimalityPForms}, and \ref{OptimalityCurvTensors} show that the estimates in lemma \ref{EstimatesForTensors} are sharp in the cases of symmetric $(0,2)$-tensors, forms and algebraic curvature tensors, respectively.

An $(n-1)$-nonnegative algebraic curvature operator $\mathfrak{R} \colon \Lambda^2 \R^n \to \Lambda^2 \R^n$ and a $2$-form $\omega$ such that $g( \mathfrak{R}( \hat{\omega}), \hat{\omega}) <0$ are constructed in example \ref{NegativeBochnerTwoForms}.

Examples \ref{CurvatureSphereTimesEuclideanSpace} and \ref{CurvatureProductsOfSpheresTimesEuclideanSpace} compute $| \hat{R}|^2$ for $S^p \times \R^{n-p}$ and $S^2 \times \ldots \times S^2 \times \R^{n-2k}$. This will be used to give a different proof of the formula $| \hat{R} |^2 = 4 (n-1) | \mathring{R} |^2  - 2 | \Riczero |^2 $ of proposition \ref{NormsOfHats}.

\begin{exampleroman}
For $p, q \geq 2$ consider $S^{p+q+1}$ as the doubly warped product
\begin{align*}
\left( \ [0, \pi /2] \times S^p \times S^q, \ dr^2 + \phi^2 ds_p^2 + \psi^2 ds_q^2 \ \right)
\end{align*} 
where $\phi(r) = \sin(r)$ and $\psi(r) = \cos(r)$. If $X$ is tangent to $S^p$ then the curvature operator satisfies $\mathfrak{R}( \partial_r \wedge X) = - \frac{\phi{''}}{\phi} \partial_r \wedge X$ and no other eigenvalue depends on $\phi{''}.$ Thus there is a small $\mathcal{C}^1$-perturbation of $\phi$ so that the eigenvalue $- \frac{\phi{''}}{\phi}$ can be arranged to have an arbitrary negative minimum at some $r_0 \in (0,\pi/2),$ while all other eigenvalues remain close to one. This procedure yields metrics on $S^{p+q+1}$ of the following types:

(a) Metrics that have positive isotropic curvature but do not induce positive isotropic curvature on $S^{p+q+1} \times \R$, and have $(p+1)$-positive curvature operator but do not have $p$-positive curvature operator.

(b) Metrics with negative isotropic curvatures at some tangent space whose curvature operator is $k$-positive but not $(k-1)$-positive for $k=5, \ldots, pq + p +q +1.$
\label{DoublyWarpedProducts}

In particular, Brendle's \cite{BrendleConvergenceInHihgerDimensions} convergence theorem for the Ricci flow, Micallef-Moore's \cite{MicallefMoorePIC} theorem on simply connected manifolds with positive isotropic curvature, 
and Theorem \ref{BettiNumberEstimate} indeed make different assumptions on curvature.
\end{exampleroman}

\begin{exampleroman} Consider the symmetric $(0,2)$-tensor $h = e^1 \otimes e^1 - e^2 \otimes e^2$ and the bivector $L= e_1 \wedge e_2$. It follows that $Lh = -2 (e^1 \otimes e^2 + e^2 \otimes e^1)$ and $| Lh |^2 = 4 |h|^2 |L|^2.$ In particular, the estimates for symmetric $(0,2)$-tensors in proposition \ref{NormActionOnSymTensors} and lemma \ref{EstimatesForTensors} are optimal.
\label{OptimalitySymmetricTensors}
\end{exampleroman}

\begin{exampleroman}
Consider the $2$-forms $\omega_1 = e^1 \wedge e^3 - e^2 \wedge e^4$ and $\omega_2= e^1 \wedge e^4 + e^2 \wedge e^3$ and the bivector $L = e_1 \wedge e_2 + e_3 \wedge e_4.$ It follows that $L \omega_1 = -2 \omega_2$ and $L \omega_2 = 2 \omega_1$. In particular, $|L \omega_1 |^2 = |L\omega_2|^2 = 8,$ $| \omega_1 |^2= | \omega_2 |^2 =2$ and  $|L|^2=2.$ Thus the estimate in lemma \ref{EstimatesForTensors} is optimal for $2$-forms.
\label{OptimalityTwoForms}
\end{exampleroman}

\begin{exampleroman}
For $p$-forms on $\R^{2p}$ consider $L=e_1 \wedge e_2 + \ldots + e_{2p-1} \wedge e_{2p}$. There are $2^p$ forms $\omega$ of the form $e^I = e^{i_1} \wedge \ldots \wedge e^{i_p}$ with $i_1 < \ldots < i_p$ such that $L \omega$ is a linear combination of exactly $p$ forms $e^I$. Notice from the proof of lemma \ref{EstimatesForTensors} that this happens precisely when $i_1 \in \lbrace 1,2 \rbrace, \ldots, i_p \in \lbrace 2p-1, 2p\rbrace.$ The span of these $e^I$ is a subspace which is invariant under $L$. Furthermore, there is a choice of $\alpha_I, \beta_I \in \lbrace \pm 1 \rbrace$ such that $L  \sum \alpha_I e^I  = \pm p \sum \beta_I e^I.$ The signs can be predicted in the following way:

The basis elements will be grouped into $p+1$ groups $B_0, \ldots, B_p$ where $B_k$ consists of $\binom{p}{k}$ basis elements. The coefficients of the basis elements in each group will have the same sign but the coefficients of the basis elements in $B_k$ and $B_{k+2}$ must have opposite signs. Set  $B_0 = \lbrace e^1 \wedge e^3 \wedge \ldots \wedge e^{2p-1} \rbrace.$ Suppose that $B_0, \ldots, B_k$ have already been constructed. Apply $L$ to the elements in $B_k$. This produces $\binom{p}{k+1}$ basis elements which have not occurred in $B_0, \ldots, B_k.$ These elements form $B_{k+1}.$ Note that, e.g., $B_p = \lbrace e_2 \wedge e_4 \wedge \ldots \wedge e_{2p} \rbrace$. Define 
\begin{align*}
\omega_1 & = + \sum_{B_0} e^I - \sum_{B_2} e^I + \sum_{B_4} e^I - \ldots , \\
\omega_2 & = + \sum_{B_1} e^I - \sum_{B_3} e^I + \sum_{B_5} e^I - \ldots.
\end{align*}
It follows that $L \omega_1 = -p \omega_2$ and $L \omega_2 = p \omega_1.$ Notice that $\Omega = \omega_1 \pm \omega_2$ indeed uses $2^p$ basis elements.

In the case $p=3$ one obtains
\begin{align*}
\omega_1 & = e^{1} \wedge e^3 \wedge e^5 - e^{1} \wedge e^4 \wedge e^6 - e^{2} \wedge e^3 \wedge e^6 - e^{2} \wedge e^4 \wedge e^5, \\
\omega_2 & = e^{1} \wedge e^3 \wedge e^6 + e^{1} \wedge e^4 \wedge e^5 + e^{2} \wedge e^3 \wedge e^5 - e^{2}\wedge e^4 \wedge e^6.
\end{align*}

For dimensions $n \geq 2p$ notice that $\Lambda^2 (\R^{2p})^{*} \subseteq \Lambda^2 (\R^{n})^{*}$. This shows that the estimate of lemma \ref{EstimatesForTensors} is optimal for $p$-forms.
\label{OptimalityPForms}
\end{exampleroman}

\begin{exampleroman}
Let $\lambda > 0$ and let $\Xi_1, \ldots, \Xi_6$ be the Singer-Thorpe basis defined in remark \ref{LRinSingerThorpeBasis}. Consider the algebraic curvature operator $\mathfrak{R}$ on $\Lambda^2 \R^4$ given by
\begin{align*}
\mathfrak{R}( \Xi_1 ) & = - \lambda \Xi_1, \\
\mathfrak{R}( \Xi_2 ) & = \lambda \Xi_2, \\
\mathfrak{R}( \Xi_3 ) & = 3\lambda \Xi_3, \\
\mathfrak{R}( \Xi_i ) & = \lambda \Xi_i \ \text{ for } \ i=4,5,6.
\end{align*}
Notice that $\mathfrak{R}$ is $2$-nonnegative and Einstein, has $| \mathring{R} |^2 = 8 \lambda^2$ and satisfies
\begin{align*}
| \Xi_1 R |^2 & = 2 | \mathring{R} |^2, \\
| \Xi_2 R |^2 & = 8 | \mathring{R} |^2, \\ 
| \Xi_3 R |^2 & = 2 | \mathring{R} |^2, \\
\Xi_i R & =  0 \ \text{ for } \ i=4,5,6
\end{align*}
due to proposition \ref{LRinSingerThorpeBasis}. 

In particular, this example achieves equality in the estimate for curvature tensors in lemma \ref{EstimatesForTensors}. 

This observation also implies that $\mathfrak{R}$ indeed satisfies the first Bianchi identity: Recall that a tensor $T \in \operatorname{Sym}^2(\Lambda^2 \R^n)$ satisfies the Bianchi identity if and only if it is orthogonal to $\Lambda^4 \R^n.$ Due to lemma \ref{EstimatesForTensors} the orthogonal projection of $\mathfrak{R}$ onto $\Lambda^4 \R^4$ cannot achieve equality in $|L R|^2 \leq 8 |R|^2 |L|^2$ for any $L \in \mathfrak{so}(\R^4).$ Thus, since $\mathfrak{R}$ does achieve equality, its orthogonal projection onto $\Lambda^4 \R^4$ must vanish. 

Due to corollary \ref{LRmWithVanishingWeyl} a similar argument shows that every curvature tensor which maximizes $|L \Rm|^2 \leq 8 |\mathring{\Rm}|^2 |L|^2$ for some $L \in \mathfrak{so}(\R^n)$ must be Einstein and cannot have vanishing Weyl curvature.
\label{OptimalityCurvTensors}
\end{exampleroman}

\begin{exampleroman}
Let $\Xi_1, \ldots, \Xi_6$ be the Singer-Thorpe basis defined in remark \ref{SingerThorpeBasis} and set $\omega=e^1 \wedge e^4 + e^2 \wedge e^3.$ It follows that $| \Xi_1 \omega |^2 = | \Xi_2 \omega |^2 = 2 |\omega|^2$ and $\Xi_i \omega=0$ for $i=3, \ldots, 6.$ Recall that any operator $\mathfrak{R} \colon \Lambda^2 \R^4 \to \Lambda^2 \R^4$ with $\mathfrak{R}(\Xi_{\alpha}) = \lambda_{\alpha} \Xi_{\alpha}$ for $\alpha=1, \ldots, 6$ satisfies the first Bianchi identity if and only if $\lambda_1 +\lambda_2 + \lambda_3 = \lambda_4 + \lambda_5 + \lambda_6.$ Furthermore, proposition \ref{ComputationBochnerTermWithEigenvalues} implies that 
\begin{align*}
g( \mathfrak{R}(\hat{\omega}), \hat{\omega} ) = 2 ( \lambda_1 + \lambda_2 ) | \omega |^2.
\end{align*}

In particular, the above curvature term for $\C P^2$ with the Fubini Study metric vanishes on the associated K{\"a}hler form $\omega_{FS},$ see also remark \ref{LREstimateOptimalForCPTwo}. In fact, the example of $\C P^2$ shows that Theorem \ref{BettiNumberEstimate} fails if its assumptions are weakened to $3$-positive curvature operators in dimension $n=4.$ 

Furthermore, setting $\lambda_1 = \lambda_2 = - \lambda,$ $\lambda_3 = 8 \lambda$ and $\lambda_4=\lambda_5=\lambda_6=2\lambda$ for some $\lambda >0$ yields a $3$-nonnegative curvature operator $\mathfrak{R} \colon \Lambda^2 \R^4 \to \Lambda^2 \R^4$ with $g( \mathfrak{R}(\hat{\omega}), \hat{\omega} ) = -4 \lambda | \omega |^2 <0.$ 

More generally, for $n \geq 4,$ an $(n-1)$-nonnegative curvature operator $\mathfrak{R} \colon \Lambda^2 \R^n \to \Lambda^2 \R^n$ with $g( \mathfrak{R}(\hat{\omega}), \hat{\omega} ) <0$ is given as follows:

Extend the Singer-Thorpe basis $\Xi_1, \ldots, \Xi_6$ for $\Lambda^2 \R^4$ to a basis $\lbrace \Xi_{\alpha} \rbrace$ for $\Lambda^2 \R^n$ by including the forms $e_i \wedge e_j$ for $i \in \lbrace 1, \ldots, 4 \rbrace$, $j \in \lbrace 5, \ldots, n \rbrace$ and $i, j \in \lbrace 5, \ldots, n \rbrace$ with $i < j.$ It follows that 
\begin{align*}
| (e_i \wedge e_j) \omega |^2 = \begin{cases}
\frac{1}{2} |\omega|^2 & \ \text{if} \ i \in \lbrace 1, \ldots, 4 \rbrace \ \text{and} \ j \in \lbrace 5, \ldots, n \rbrace, \\
0 & \ \text{if} \ i, j \in \lbrace 5, \ldots, n \rbrace.
\end{cases}
\end{align*}

The operator $\mathfrak{R} \colon \Lambda^2 \R^n \to \Lambda^2 \R^n$ defined by $\mathfrak{R}(\Xi_{\alpha}) = \lambda_{\alpha} \Xi_{\alpha}$ for $\alpha=1, \ldots, \binom{n}{2}$ still satisfies the first Bianchi identity if and only if $\lambda_1 +\lambda_2 + \lambda_3 = \lambda_4 + \lambda_5 + \lambda_6$. Pick $\lambda>0$ and set
\begin{align*}
\lambda_1 = \lambda_2 = -(n-3) \lambda, \ \lambda_3=2n \lambda \ \text{and} \ \lambda_4 = \ldots = \lambda_{\binom{n}{2}} = 2\lambda.
\end{align*}
It follows that $\mathfrak{R}$ is an $(n-1)$-nonnegative algebraic curvature operator with 
\begin{align*}
g( \mathfrak{R}(\hat{\omega}), \hat{\omega} ) = \lbrace 2 (\lambda_1 + \lambda_2 ) + 4(n-4) \lambda \rbrace | \omega |^2 = - 4 \lambda | \omega |^2 < 0.
\end{align*}
Thus there also is an $(n-1)$-positive algebraic curvature operator $\widetilde{\mathfrak{R}}$ with $g( \widetilde{\mathfrak{R}}(\hat{\omega}), \hat{\omega} )<0.$ 
\label{NegativeBochnerTwoForms}
\end{exampleroman}

\begin{remarkroman}
Let $\mathfrak{R} \colon \Lambda^2 \R^n \to \Lambda^2 \R^n$ be a self-adjoint operator and $2p \leq n.$ If $\mathfrak{R}$ is $(n-p)$-positive, then lemmas \ref{GeneralBochnerTermEstimate}, \ref{EstimatesForTensors} and proposition \ref{NormsOfHats} show that $g( \mathfrak{R}( \hat{\omega}), \hat{\omega})>0$ for every non-zero $\omega \in \Lambda^p (\R^n)^{*}.$ In particular, $\mathfrak{R}$ does not need to satisfy the first Bianchi identity.

Given the examples of $\omega \in \Lambda^p (\R^{2p})^{*}$ and $\Xi \in \mathfrak{so}(\R^{2p})$ with $|\Xi \omega|^2 = p |\omega|^2$ in example \ref{OptimalityPForms}, for every $n \geq 2p$ it is easy to find a self-adjoint, $(n-p+1)$-positive operator $\mathfrak{R} \colon \Lambda^2 \R^n \to \Lambda^2 \R^n$ with $g( \mathfrak{R}( \hat{\omega}), \hat{\omega})<0$.
\end{remarkroman}

\begin{exampleroman}
The curvature tensor $R$ of $S^p \times \R^{n-p}$ satisfies
\begin{equation*}
| \hat{R} |^2 = 2 (p-1) p (n-p).
\end{equation*}
\label{CurvatureSphereTimesEuclideanSpace}
\end{exampleroman}
\begin{proof}
Let $e_1, \ldots, e_n$ be an orthonormal basis such that $e_1, \ldots, e_p$ correspond to tangent vectors of $S^p$ and notice that the curvature operator $\mathfrak{R}$ of $S^p \times \R^{n-p}$  satisfies
\begin{align*}
\mathfrak{R}(e_i \wedge e_j)  = e_i \wedge e_j \ \text{ and } \ 
\mathfrak{R}(e_i \wedge e_a)  = \mathfrak{R}(e_a \wedge e_b) = 0
\end{align*}
for $i,j \in \lbrace 1, \ldots, p \rbrace$ and $a, b \in \lbrace p+1, \ldots, n \rbrace.$ In the following, consider $i<j$ and $a<b.$

Index the above basis of $\Lambda^2\R^n$ so that the elements $\Xi_x = e_i \wedge e_j$, $\Xi_y = e_i \wedge e_a$ and $\Xi_z = e_a \wedge e_b$ satisfy $x<y<z.$ To calculate $| \hat{R} |^2$ via proposition \ref{NormLRinTermsOfEigenvalues} notice that for $\alpha < \beta$ the term $(\lambda_{\alpha} - \lambda_{\beta} ) g( \Xi_{\alpha}, ( \Xi_{\gamma} ) \Xi_{\beta} )$ can only be non-zero if $\Xi_{\alpha} = e_i \wedge e_j$ and $\Xi_{\beta} = e_k \wedge e_a$ for some $k \in \lbrace 1, \ldots, p \rbrace.$ If $\Xi_{\gamma} = e_{\lambda} \wedge e_{\mu},$ then 
\begin{align*}
g( \Xi_{\alpha}, ( \Xi_{\gamma} ) \Xi_{\beta} ) 
& = g( e_i \wedge e_j, \delta_{\lambda a} e_k \wedge e_{\mu} - \delta_{a \mu} e_k \wedge e_{\lambda} ) \\
& = \delta_{\lambda a} ( \delta_{ik} \delta_{j \mu} - \delta_{i \mu} \delta_{j k} ) - \delta_{a \mu} ( \delta_{ik} \delta_{j \lambda} - \delta_{ i \lambda} \delta_{j k})
\end{align*}
is non-zero for $\lambda < \mu$ only if $k=i$ and $\lambda = j, \mu=a$ or $k =j$ and $\lambda=i, \mu=a.$
This implies
\begin{align*}
| (e_k \wedge e_a) R |^2 
& = 2 \sum_{1 \leq i < j \leq p} \sum_{\substack{l =1, \ldots, p \\ b=p+1, \ldots, n}} g(e_i \wedge e_j, (e_k \wedge e_a) e_l \wedge e_b)^2 \\
& = 2 \sum_{1 \leq i < j \leq p} \left\lbrace  g(e_i \wedge e_j, (e_k \wedge e_a) e_i \wedge e_a)^2 + g(e_i \wedge e_j, (e_k \wedge e_a) e_j \wedge e_a)^2\right\rbrace \\
& = 2 \sum_{\substack{i=1 \\ i \neq k}}^p 1 = 2(p-1)
\end{align*}
for $k \in \lbrace 1, \ldots, p \rbrace$ and $a \in \lbrace p+1, \ldots, n \rbrace$ and therefore
\begin{equation*}
| \hat{R} |^2 = \sum_{\substack{k=1, \ldots, p \\ a=p+1, \ldots, n}} | (e_k \wedge e_a) R |^2 = 2 (p-1) p (n-p)
\end{equation*}
as claimed.
\end{proof}

Example \ref{CurvatureSphereTimesEuclideanSpace} can be used to give an alternative {\em proof} of 
\begin{equation*}
| \hat{R} |^2 = 4 (n-1) | \mathring{R} |^2  - 2 | \Riczero |^2 
\end{equation*}
of proposition \ref{NormsOfHats}: 

Due to the decomposition of $\operatorname{Sym}_B^2(\Lambda^2\R^n)$ into $O(n)$-irreducible orthogonal summands there are constants $a, b, c \in \R$ such that 
\begin{align*}
| \hat{R} |^2 = a \scal^2 + b | \Ric |^2 + c | R |^2
\end{align*}
for all algebraic curvature operators on $\R^n.$ Evaluation on the algebraic curvature tensors of $S^p \times \R^{n-p}$ implies $a=0,$ $b=-2$ and $c=4(n-1).$ 

Notice that $a=0$ is also immediate from the fact that $|LR| = | L \mathring{R}|$ for all $L \in \mathfrak{so}(\R^n)$, and for the same reason $R$ and $\Ric$ can be replaced by $\mathring{R}$ and $\mathring{\Ric},$ respectively. \hfill $\Box$

\begin{exampleroman}
The curvature tensor of $S^2 \times \ldots \times S^2 \times \R^{n-2k}$ satisfies
\begin{align*}
| \hat{R} |^2 = 4k(n-2).
\end{align*}
\label{CurvatureProductsOfSpheresTimesEuclideanSpace}
\end{exampleroman}
\begin{proof}
Let $e_1, \ldots, e_n$ be an orthonormal basis for $\R^n$ such that $e_{2i-1}, e_{2i}$ correspond to tangent vectors of the $i^{\text{th}}$ $S^2$-factor. Thus $e_1 \wedge e_2, \ldots, e_{2k-1} \wedge e_{2k}$ are eigenvectors of the curvature operator $\mathfrak{R}$ with eigenvalue $\lambda = 1$ and the remaining vectors of the basis $e_1 \wedge e_2, \ldots, e_{n-1} \wedge e_n$ for $\Lambda^2 \R^n$ form a basis for the kernel of $\mathfrak{R}.$

Index the above basis $\lbrace \Xi_{\alpha} \rbrace$ for $\Lambda^2 \R^n$  so that $\Xi_i = e_{2i-1} \wedge e_{2i}$ for $i=1, \ldots, k.$ Then for $\alpha < \beta$ the term $(\lambda_{\alpha} - \lambda_{\beta} ) g( \Xi_{\alpha}, ( \Xi_{\gamma} ) \Xi_{\beta} )$ can only be non-zero if $\Xi_{\alpha} = e_{2i-1} \wedge e_{2i}$ for some $i \in \lbrace 1, \ldots, k \rbrace$ and $\Xi_{\beta} = \pm e_{j} \wedge e_{a}$ for $j \in \lbrace 2i-1, 2i \rbrace$ and $a \in \lbrace 1, \ldots, n \rbrace \setminus \lbrace 2i-1, 2i \rbrace.$ In this case $| g( e_{2i-1} \wedge e_{2i}, (\Xi_{\gamma}) e_j \wedge e_a ) | = 1$ if and only if $\Xi_{\gamma} = \pm e_l \wedge e_a$ for $l \in \lbrace 2i-1, 2i \rbrace \setminus \lbrace j \rbrace$.

Thus for $j \in \lbrace 2i-1, 2i \rbrace \subseteq \lbrace 1, \ldots, 2k \rbrace$ and $a \in \lbrace 1, \ldots, n \rbrace \setminus \lbrace 2i-1, 2i \rbrace$  proposition \ref{NormLRinTermsOfEigenvalues} implies 

\begin{align*}
| (e_j \wedge e_a ) R|^2 
& = 2 \sum_{l=2i-1, 2i} \sum_{\substack{b = 1, \ldots, n \\ b \neq 2i-1, 2i }} g(e_{2i-1} \wedge e_{2i}, (e_j \wedge e_a) e_l \wedge e_b )^2 \\
& = 2 \sum_{l=2i-1, 2i} g(e_{2i-1} \wedge e_{2i}, (e_j \wedge e_a) e_l \wedge e_a)^2  = 2
\end{align*}
and hence
\begin{align*}
| \hat{R} |^2 = \sum_{j=1}^{2k} \sum_{ \substack{a = 1, \ldots, n \\ a \neq j, j+ (-1)^{j+1} }} |(e_j \wedge e_a) R|^2 = 4k(n-2).
\end{align*}
\end{proof}




\end{document}